\newcommand{\email}[1]{\href{mailto:#1}{\nolinkurl{#1}}}
\renewcommand{\leq}{\ensuremath{\leqslant}}
\renewcommand{\geq}{\ensuremath{\geqslant}}
\newcommand{\Frac}[2]{\displaystyle{\frac{#1}{#2}}}
\newcommand{\menge}[2]{\big\{{#1} \mid {#2}\big\}}
\newcommand{\Scal}[2]{{\bigg\langle{{#1}\:\bigg |~{#2}}\bigg\rangle}}
\newcommand{\Menge}[2]{\bigg\{{#1}~\bigg|~{#2}\bigg\}}
\newcommand{\lev}[1]{\ensuremath{\mathrm{lev}_{\leq #1}\:}}
\newcommand{\emp}{\ensuremath{{\varnothing}}}
\newcommand{\scal}[2]{\left\langle{#1}\mid {#2} \right\rangle}
\newcommand{\norm}[1]{\|#1\|}
\newcommand{\infconv}{\ensuremath{\mbox{\footnotesize$\,\square\,$}}}
\newcommand{\einfconv}{\ensuremath{\mbox{\footnotesize$\,\boxdot\,$}}}
\newcommand{\exi}{\ensuremath{\exists\,}}
\newcommand{\HH}{\ensuremath{\mathcal H}}
\newcommand{\GG}{\ensuremath{\mathcal G}}
\newcommand{\BL}{\ensuremath{\EuScript B}\,}
\newcommand{\RR}{\ensuremath{\mathbb R}}
\newcommand{\RX}{\ensuremath{\,\left]-\infty,+\infty\right]}}
\newcommand{\RXX}{\ensuremath{\left[-\infty,+\infty\right]}}
\newcommand{\NN}{\ensuremath{\mathbb N}}
\newcommand{\dom}{\ensuremath{\operatorname{dom}}}
\newcommand{\cont}{\ensuremath{\operatorname{cont}}}
\newcommand{\prox}{\ensuremath{\operatorname{prox}}}
\newcommand{\intdom}{\ensuremath{\operatorname{int}\operatorname{dom}}\,}
\newcommand{\inte}{\ensuremath{\operatorname{int}}}
\newcommand{\Argmin}{\ensuremath{\operatorname{Argmin}}}
\newcommand{\zer}{\ensuremath{\operatorname{zer}}}
\newcommand{\gra}{\ensuremath{\operatorname{gra}}}
\newcommand{\Fix}{\ensuremath{\operatorname{Fix}}}
\newcommand{\Id}{\ensuremath{\operatorname{Id}}}
\newcommand{\weakly}{\ensuremath{\rightharpoonup}}
\newtheorem{theorem}{Th\'eor\`eme}[section]
\newtheorem{lemma}[theorem]{Lemme}
\newtheorem{corollary}[theorem]{Corollaire}
\newtheorem{proposition}[theorem]{Proposition}
\newtheorem{definition}[theorem]{D\'efinition}
\theoremstyle{plain}{\theorembodyfont{\rmfamily}
}
\theoremstyle{plain}{\theorembodyfont{\rmfamily}
}
\theoremstyle{plain}{\theorembodyfont{\rmfamily}
}
\theoremstyle{plain}{\theorembodyfont{\rmfamily}
\newtheorem{example}[theorem]{Exemple}}
\theoremstyle{plain}{\theorembodyfont{\rmfamily}
}
\theoremstyle{plain}{\theorembodyfont{\rmfamily}
\newtheorem{remark}[theorem]{Remarque}}
\theoremstyle{plain}{\theorembodyfont{\rmfamily}
\newtheorem{notation}[theorem]{Notation}}
\numberwithin{equation}{section}
\newcommand{\Diff}{\ensuremath{\operatorname{D}}}
\newcommand{\Inv}{\ensuremath{\operatorname{Inv}}}
\newcommand{\Li}{\ensuremath{\underset{\mathrm{P-K}}{\operatorname{liminf}}}}
\newcommand{\Ls}{\ensuremath{\underset{\mathrm{P-K}}{\operatorname{limsup}}}}
\newcommand{\linf}{\ensuremath{\underline{\operatorname{lim}}}}
\newcommand{\lsup}{\ensuremath{\overline{\operatorname{lim}}}}
\newcommand{\cl}[1]{\ensuremath{\overline{#1}}}
\newcommand{\epi}{\ensuremath{\operatorname{epi}}}
\newcommand{\clev}[1]{\ensuremath{\mathrm{lev}_{> #1}\:}}
\newcommand{\im}{\ensuremath{\operatorname{im}}}
\newcommand{\fr}{\ensuremath{\operatorname{fr}}}
\newcommand{\Hfo}{\ensuremath{\mathcal H}^{\textup{fort}}}
\newcommand{\Hfa}{\ensuremath{\mathcal H}^{\textup{faible}}}
\newcommand{\pk}{\overset{\textsc{P-K}}{\longrightarrow}}
\newcommand{\dueto}[2]{\textup{\cite[#1]{#2}}}
\begin{document}


\begin{center}
{\sffamily{\Large\textbf{-- Master's Degree Memoir --}}}

\vskip 12mm

{\sffamily{\Large\textbf{Subgradient Projection Operators}}}

\vskip 12mm

{\sffamily{\Large\textbf{ by Beno\^it Pauwels}

\vskip 12mm

{\sffamily{\large under the supervision of :\\[4mm]
Patrick L. Combettes \\[4mm]
\small UPMC Universit\'e Paris 06\\
\small Laboratoire Jacques-Louis Lions-- UMR CNRS 7598\\
\small 75005 Paris, France\\
\small\ttfamily{plc@math.jussieu.fr} }}
}}
\vskip 11mm

\end{center}

\noindent
{\bfseries Abstract.}
Several algebraic and topological properties of subgradient 
projection operators are investigated and various examples are 
provided. Connections with
Moreau's proximity operator are also made and acceleration schemes
for subgradient projection algorihms are discussed. Finally
continuity, nonexpansiveness, monotonicity, differentiability, and 
epi-convergence properties are investigated.

\vfill

\begin{center}
{\normalsize September 20, 2012}

{\normalsize Laboratoire Jacques-Louis Lions --
Universit\'e Pierre et Marie Curie\\}
\end{center}

\newpage
{\centering
{\sffamily{\Large\textbf{-- M\'emoire de Master OJME --}}}
\vfill
{\sffamily{\Large\textbf{Op\'erateurs de projection 
sous-diff\'erentielle}}}
\vfill
{\sffamily{\Large par Beno\^it Pauwels}}
\vfill
{\sffamily{\Large sous la direction de :\\[4mm]
Patrick L. Combettes \\[4mm]
\small UPMC Universit\'e Paris 06\\
\small Laboratoire Jacques-Louis Lions-- UMR CNRS 7598\\
\small 75005 Paris, France\\
\small\ttfamily{plc@math.jussieu.fr} }}
\vfill
{\normalsize 20 septembre 2012}
\vfill
{\normalsize Laboratoire Jacques-Louis Lions --
Universit\'e Pierre et Marie Curie\\}
}


\newpage

\tableofcontents


\newpage

\section*{Introduction}

La notion de {\em projecteur sous-diff\'erentiel} a \'et\'e 
introduite par Naum Z. Shor dans un algorithme de r\'esolution 
de programmes lin\'eaires \cite{Shor62}. Elle a \'et\'e approfondie 
et d\'evelopp\'ee par le m\^eme Shor \cite{Shor64,Shor85,Shor98}, 
Yuri M. Ermoliev \cite{Ermo,ErSh} et Boris T. Polyak 
\cite{Poly67,Poly78,Poly87} notamment.
\newline
Dans toute la suite $\HH$ d\'esignera un espace de Hilbert sur 
$\RR$. Nous allons nous int\'eresser au probl\`eme suivant: 
\'etant donn\'es un point $x_0\in\HH$ et une fonction de 
$\Gamma_0(\HH)$ - c'est-\`a-dire une fonction de $\HH$ dans $\RX$ 
propre convexe semi-continue inf\'erieurement - on cherche la 
projection m\'etrique de $x_0$ sur 
$\lev{0}f=\menge{x\in\HH}{f(x)\leq0}$ (suppos\'e non vide).
\newline
Voici une version de l'algorithme de Shor, \'etudi\'ee par Polyak 
\cite{Poly87}:
\begin{equation}
\label{algo hist}
x_{n+1}=x_n-\Frac{f(x_n)}{\norm{u_n}^2}u_n
\end{equation}
o\`u $(\forall n\in\NN)~u_n\in\partial f(x_n)$. Sous certaines 
conditions les orbites cet algorithme convergent faiblement vers 
des points de $\lev{0}f$ \cite{Mor01}.
\newline
Pour toute s\'election $U$ de $\partial f$ et tout point 
$x\in\dom U$ nous appellerons {\em projection 
sous-diff\'erentielle} 
de $x$ dans la direction $Ux$ le point 
\begin{equation}
G_f^Ux=x-\Frac{f(x)}{\norm{Ux}^2}Ux.
\end{equation}
Plus g\'en\'eralement on 
s'int\'eressera \`a l'ensemble 
\begin{equation}
G_fx=\Menge{x-\Frac{f(x)}{\norm{u}^2}u}{u\in\partial f(x)}.
\end{equation}
L'algorithme (\ref{algo hist}) se r\'e\'ecrit alors simplement
\begin{equation}
x_{n+1}=G_f^Ux_n
\end{equation}
o\`u $U$ est une s\'election qui prolonge la suite $(u_n)_{n\in\NN}$.
\newline
Dans ce document nous \'etudierons l'op\'erateur de 
projection sous-diff\'erentielle et ses s\'elections. Nous nous 
int\'eresserons notamment \`a leurs propri\'et\'es alg\'ebriques 
(projecteur sous-diff\'erentiel d'une composition, d'une combinaison 
affine, d'une inf-convolution, etc.), \`a leurs liens avec la 
conjugu\'ee de Fenchel et l'enveloppe de Moreau, \`a leur 
r\'egularit\'e (continuit\'e, diff\'erentiabiliti\'e, caract\`ere 
lipschitzien), au comportement de la suite des projecteurs 
sous-diff\'erentiels 
associ\'ee \`a une suite de fonctions de 
$\Gamma_0(\HH)$, et enfin aux propri\'et\'es d'op\'erateur 
multivoque de $G_f$ (propri\'et\'es des valeurs, semi-continuit\'e 
et monotonie).

\section*{Contributions}

\`A notre connaissance l'\'etude de la projection 
sous-diff\'erentielle 
comme op\'erateur est nouvelle, et l'ensemble des 
propri\'et\'es alg\'ebriques, topologiques et s\'equentielles 
d\'emontr\'ees ici sur cet objet sont originales.


\newpage


Dans toute ce document $\HH$ d\'esigne un espace de Hilbert sur 
$\RR$.

\section{Notations, d\'efinitions et exemples}

\subsection{Projecteur sous-diff\'erentiel}

\'Etant donn\'ee $f\in\Gamma_0(\HH)$ telle que 
$\lev{0}f=\menge{x\in\HH}{f(x)\leq0}\neq\emp$ on veut 
approcher l'op\'erateur de projection m\'etrique sur 
$\lev{0}f$ par un op\'erateur plus 
"simple" \`a calculer. L'id\'ee de la projection 
sous-diff\'erentielle 
est de projeter sur un demi-espace contenant 
$\lev{0}f$ d\'efini \`a l'aide d'un sous-gradient de $f$. 
Cette approximation est particuli\`erement utile dans de 
nombreux probl\`emes d'optimisation 
\cite{Baus96,Imag97,Else01}.

\begin{notation}
Soient $f\in\Gamma_0(\HH)$ telle que $\lev{0}f\neq\emp$ et 
$(x,u)\in\gra\partial f$. On d\'efinit 
l'approximation suivante de $\lev{0}f$:
\begin{equation}
H_u=\menge{y\in\HH}{\scal{y-x}{u}+f(x)\leq0}
\end{equation} 
\end{notation}

\begin{remark}
$H_u$ est un convexe ferm\'e contenant $\lev{0}f$ 
(donc non vide). On 
observe que: $x\in H_u\Leftrightarrow f(x)\leq0$.
\end{remark}

Le lemme suivant, qui est une cons\'equence directe de la r\`egle 
de Fermat, affirme que les sous-gradients des points de 
$\clev{0}f=\menge{x\in\HH}{f(x)>0}$ sont non nuls.

\begin{lemma}
\label{sous-gradient non nul}
Soit $f\in\Gamma_0(\HH)$ telle que $\lev{0}f\neq\emp$.
Alors $\zer\partial f\subset\lev{0}f$.
\end{lemma}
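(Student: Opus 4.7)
L'indication précédant l'énoncé oriente directement vers la règle de Fermat~: celle-ci caractérise les zéros du sous-différentiel d'une fonction de $\Gamma_0(\HH)$ comme les minimiseurs globaux. L'idée consiste donc à observer que si $x$ est un tel minimiseur global et si le niveau $\lev{0}f$ est non vide, alors la valeur minimale $f(x)$ est forcément bornée par $0$.

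Concrètement, je partirais d'un $x\in\zer\partial f$, c'est-à-dire $0\in\partial f(x)$. La règle de Fermat fournit $x\in\Argmin f$, autrement dit $f(x)\leq f(y)$ pour tout $y\in\HH$. L'hypothèse $\lev{0}f\neq\emp$ permet alors de fixer un $y_0\in\HH$ vérifiant $f(y_0)\leq 0$, et l'on conclut par la chaîne d'inégalités $f(x)\leq f(y_0)\leq 0$. Ceci signifie exactement que $x\in\lev{0}f$, d'où l'inclusion voulue.

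Il n'y a pas véritablement d'obstacle à surmonter~: l'argument tient en deux inégalités une fois la règle de Fermat invoquée. Le seul point à garder à l'esprit est que cette caractérisation des minimiseurs par l'annulation du sous-différentiel s'applique bien dans le cadre $\Gamma_0(\HH)$ considéré, où elle est classique.
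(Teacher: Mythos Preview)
Your argument is correct and is precisely the one the paper has in mind: the text introduces the lemma as a direct consequence of Fermat's rule, without spelling out the two-line computation you give. There is nothing to add.
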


\begin{lemma}
\dueto{Example 28.16}{Livre1}
\label{proj metrique hyperplan}
Soient $f\in\Gamma_0(\HH)$ telle que $\lev{0}f\neq\emp$ 
et $(x,u)\in\gra\partial f$.
L'expression de $P_{H_u}x$ est donn\'ee par
\begin{equation}
P_{H_u}x=
\begin{cases}
x & \text{si }f(x)\leq0\\
x-\Frac{f(x)}{\norm{u}^2}u & \text{si }f(x)>0
\end{cases}
\end{equation}
\end{lemma}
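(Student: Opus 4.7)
L'\'enonc\'e n'est autre que la formule classique de projection m\'etrique sur un demi-espace affine ferm\'e, appliqu\'ee \`a $H_u$. On peut soit invoquer directement le r\'esultat cit\'e de \cite{Livre1}, soit d\'erouler l'argument en deux cas selon le signe de $f(x)$.

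Premier cas : $f(x) \leq 0$. L'\'evaluation $\scal{x-x}{u} + f(x) = f(x) \leq 0$ montre imm\'ediatement que $x \in H_u$, ce qui donne $P_{H_u} x = x$ sans plus de travail.

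Second cas : $f(x) > 0$. La premi\`ere \'etape consiste \`a s'assurer que $u \neq 0$, sans quoi la formule propos\'ee serait mal d\'efinie. Pour cela j'applique le Lemme \ref{sous-gradient non nul} par contrapos\'ee : puisque $x \notin \lev{0} f$, on a $x \notin \zer \partial f$, donc $0 \notin \partial f(x)$, et en particulier $u \neq 0$. Je pose alors $p := x - \frac{f(x)}{\norm{u}^2} u$ et je v\'erifie les deux conditions caract\'erisant $p$ comme projection m\'etrique sur le convexe ferm\'e non vide $H_u$. D'une part $p \in H_u$, par un calcul direct qui donne m\^eme $\scal{p-x}{u} + f(x) = 0$, pla\c{c}ant $p$ sur le bord de $H_u$; d'autre part la condition d'angle obtus $(\forall y \in H_u)~\scal{y-p}{x-p} \leq 0$, qu'on obtient en substituant $x - p = \frac{f(x)}{\norm{u}^2} u$ puis en exploitant directement l'in\'egalit\'e $\scal{y-x}{u} + f(x) \leq 0$ valable pour tout $y \in H_u$, le facteur $f(x)/\norm{u}^2$ \'etant strictement positif.

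Il n'y a pas de v\'eritable obstacle technique : le seul point d\'elicat est de ne pas omettre l'appel au Lemme \ref{sous-gradient non nul} pour garantir $u \neq 0$ dans le second cas, afin de justifier la bonne d\'efinition du candidat $p$.
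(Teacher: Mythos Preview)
Your argument is correct and complete: the case split on the sign of $f(x)$, the appeal to Lemme~\ref{sous-gradient non nul} to ensure $u\neq 0$, and the verification of the two characterizing conditions for the projection onto a closed convex set are all sound. The paper itself does not give a proof but simply cites \cite[Example~28.16]{Livre1}; your write-up is precisely the standard derivation one would find behind that reference, so there is nothing to add.
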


\begin{definition}
Soit $f\in\Gamma_0(\HH)$ telle que $\lev{0}f\neq\emp$.
On appellera {\em projecteur sous-diff\'erentiel} associ\'e \`a $f$ 
l'op\'erateur multivoque suivant :
\begin{equation}
G_f:\HH\rightarrow2^{\HH}:x\mapsto
\begin{cases}
\{x\} & \text{si }f(x)\leq0\\
\Menge{x-\Frac{f(x)}{\norm{u}^2}u}{u\in\partial f(x)}
& \text{si }f(x)>0.
\end{cases}
\end{equation}
\end{definition}

Observons que $\left(\forall x\in\dom\partial f\right)~G_fx=
\menge{P_{H_u}x}{u\in\partial f(x)}$. Mais l'exemple suivant 
montre qu'il peut exister des points $x\in\lev{0}f$ tels que 
$\partial f(x)=\emp$.

\begin{example}
\dueto{Example 3.8(a)}{Phel93}
Consid\'erons que $\HH$ est l'espace des suites r\'eelles de 
carr\'es sommables et notons 
$C=\menge{x\in\HH}{(\forall n\in\NN)~|x_n|\leq2^{-n}}$. Alors la 
fonction
\begin{equation}
f:\HH\rightarrow\RXX:x\mapsto
\begin{cases}
\underset{n\in\NN}{\sum}\left[-\left(2^{-n}+x_n\right)^{1/2}\right] 
& \text{si }x\in C\\
+\infty & \text{sinon}
\end{cases}
\end{equation}
est convexe semi-continue inf\'erieurement et tout \'el\'ement 
$x$ de $C$ tel que $x_n>-2^{-n}$ pour une infinit\'e d'indices 
v\'erifie $\partial f(x)=\emp$. Par exemple $\partial f(0)=\emp$ 
et $f(0)=-\Frac{\sqrt{2}}{\sqrt{2}-1}<0$, d'o\`u 
$x\in\lev{0}f\cap\complement\dom\partial f$.
\end{example}

\begin{remark}\
\label{projections}
Soit $f\in\Gamma_0(\HH)$ telle que $\lev{0}f\neq\emp$.
\begin{enumerate}
\item
$\dom G_f=\lev{0}f\cup\dom\partial f\subset\dom f$
\item
\label{lien proj metrique}
$\left(\forall x\in\dom\partial f\right)\quad G_fx=
\menge{P_{H_u}x}{u\in\partial f(x)}$
\item
$\Fix G_f=\lev{0}f$
\item
$\dom G_f=\HH\Leftrightarrow\clev{0}f\subset\dom\partial f
\Leftrightarrow\dom f=\HH$. Dans ce cas $\cont f=\dom\partial f=
\dom f=\HH$.
\item
Si $f$ est G\^ateaux-diff\'erentiable sur $\clev{0}f$ 
alors $G_f$ est univoque :
\begin{equation}
G_f:\HH\rightarrow\HH:x\mapsto
\begin{cases}
x & \text{si }f(x)\leq0\\
x-\Frac{f(x)}{\norm{\nabla f(x)}^2}\nabla f(x) & \text{si }f(x)>0.
\end{cases}
\end{equation}
\item
Supposons $\HH=\RR$ et $f$ diff\'erentiable.
Alors $\dom G_f=\RR$ et
\begin{equation}
G_f:\RR\rightarrow\RR:x\mapsto
\begin{cases}
x & \text{si }f(x)\leq0\\
x-\Frac{f(x)}{f'(x)} & \text{si }f(x)>0.
\end{cases}
\end{equation}
\item
$G_f=\Id\Leftrightarrow(\exi\eta\in\RR_{-})~f\equiv\eta$. En effet 
toute 
fonction convexe major\'ee sur $\HH$ est constante.
\end{enumerate}
\end{remark}

\begin{proposition}
\label{quasi-contractance}
Soit $f\in\Gamma_0(\HH)$ telle que $\lev{0}f\neq\emp$. Alors
\begin{equation}
\left(\forall (x,p)\in\gra G_f\right)\left(\forall y\in
\lev{0}f\right)~\norm{p-y}^2\leq\norm{x-y}^2-\norm{x-p}^2.
\end{equation}
En particulier $G_f$ est une quasi-contraction :
\begin{equation}
\left(\forall (x,p)\in\gra G_f\right)\left(\forall y\in
\lev{0}f\right)~\norm{p-y}\leq\norm{x-y}.
\end{equation}
\end{proposition}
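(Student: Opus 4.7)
Le plan est de d\'ecomposer la preuve selon le signe de $f(x)$, en exploitant le fait que $G_f$ est essentiellement une projection m\'etrique sur un demi-espace adapt\'e.

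Tout d'abord, si $f(x)\leq0$, la d\'efinition de $G_f$ donne $G_fx=\{x\}$, donc $p=x$ et l'in\'egalit\'e s'\'ecrit $\norm{x-y}^2\leq\norm{x-y}^2$, ce qui est trivialement vrai (avec en prime $\norm{x-p}^2=0$).

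Le coeur de la preuve concerne donc le cas $f(x)>0$. Dans ce cas, par d\'efinition de $G_f$, il existe $u\in\partial f(x)$ tel que $p=x-\frac{f(x)}{\norm{u}^2}u$. Comme $f(x)>0$ on a $u\neq0$ (sinon on obtiendrait $0\in\partial f(x)$, donc par la r\`egle de Fermat $x\in\Argmin f$, ce qui avec $\lev{0}f\neq\emp$ contredirait $f(x)>0$ --- c'est pr\'ecis\'ement le Lemme \ref{sous-gradient non nul}). D'apr\`es le Lemme \ref{proj metrique hyperplan} on a alors $p=P_{H_u}x$. Or par construction $H_u$ est un convexe ferm\'e contenant $\lev{0}f$, donc $y\in H_u$.

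Il ne reste plus qu'\`a invoquer la propri\'et\'e classique (\emph{firm nonexpansiveness}, ou de mani\`ere \'equivalente la caract\'erisation variationnelle $\scal{z-P_Cx}{x-P_Cx}\leq0$ pour tout $z\in C$, puis d\'eveloppement de $\norm{x-y}^2=\norm{(x-P_Cx)+(P_Cx-y)}^2$) de la projection m\'etrique sur un convexe ferm\'e $C$:
\begin{equation}
\forall x\in\HH,\ \forall y\in C,\qquad \norm{P_Cx-y}^2+\norm{x-P_Cx}^2\leq\norm{x-y}^2.
\end{equation}
Appliqu\'ee \`a $C=H_u$ et \`a $y\in H_u$, cette in\'egalit\'e donne exactement $\norm{p-y}^2\leq\norm{x-y}^2-\norm{x-p}^2$, ce qui conclut. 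La seconde assertion (quasi-contractance) r\'esulte imm\'ediatement de la positivit\'e de $\norm{x-p}^2$.

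Il n'y a pas r\'eellement d'obstacle: la seule subtilit\'e est de bien distinguer le cas trivial $f(x)\leq0$ et de justifier $u\neq0$ afin que l'expression $p=x-\frac{f(x)}{\norm{u}^2}u$ ait un sens et co\"incide avec $P_{H_u}x$; tout le reste repose sur la propri\'et\'e de firme non-expansivit\'e de la projection m\'etrique, qui est un r\'esultat standard.
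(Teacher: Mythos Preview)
Your proof is correct and follows essentially the same approach as the paper: the same case split on the sign of $f(x)$, the identification $p=P_{H_u}x$ via Lemma~\ref{proj metrique hyperplan}, and the use of the variational characterization $\scal{y-P_{H_u}x}{x-P_{H_u}x}\leq 0$ (the paper expands $\norm{p-y}^2+\norm{x-p}^2$ explicitly, whereas you quote the resulting firm-nonexpansiveness inequality directly, but these are the same computation).
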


\begin{proof}
Soient $(x,p)\in\gra G_f$ et $y\in\lev{0}f$. Si $f(x)\leq0$ alors 
$p=x$, l'in\'egalit\'e est donc v\'erifi\'ee. Supposons $f(x)>0$. 
Il existe alors $u\in\partial f(x)$ tel que $p=P_{H_u}x$ 
(Remarque \ref{projections}\ref{lien proj metrique}). Par 
cons\'equent
\begin{align}
\norm{p-y}^2+\norm{x-p}^2
&=\norm{x-y}^2-2\scal{p-y}{x-p}\nonumber\\
&=\norm{x-y}^2+2\scal{y-P_{H_u}x}{x-P_{H_u}x}\nonumber\\
&\leq\norm{x-y}^2.
\end{align}
\end{proof}

L'exemple suivant montre que la notion de projecteur 
sous-diff\'erentiel 
\'etend celle de projecteur m\'etrique.

\begin{example}
\label{proj distance}
Soit $C\subset\HH$ un convexe ferm\'e non 
vide. Alors
\begin{equation}
G_{d_C}=P_C.
\end{equation}
En particulier, pour $C=\{0\}$,
\begin{equation}
G_{\norm{\cdot}}\equiv0.
\end{equation}
\end{example}

\begin{proof}
On a $\lev{0}d_C=C$ et $(\forall x\in\complement C)~\nabla
d_C(x)=\Frac{x-P_Cx}{d_C(x)}$ 
\dueto{Proposition 18.22(iii)}{Livre1}. Par cons\'equent 
$\dom G_{d_C}=\HH$ 
et $(\forall x\in\complement C)~G_{d_C}x=P_Cx$.
\end{proof}

\subsection{S\'elections du projecteur sous-diff\'erentiel}

Dans la litt\'erature les projections sous-diff\'erentielles 
apparaissent sous forme de s\'elections de l'op\'erateur 
mutlivoque que nous venons de d\'efinir : \`a chaque \'etape de 
l'algorithme on se contente d'un seul sous-gradient du point 
courant.

\subsubsection{S\'elections}

\begin{definition}
Soit $f\in\Gamma_0(\HH)$.
On appellera {\em s\'election} de $\partial f$ tout
op\'erateur $U$ d\'efini sur une partie $D$ de $\dom
\partial f$ tel que $(\forall x\in D)~Ux\in\partial f(x)$.
\end{definition}

\begin{notation}
Soient $f\in\Gamma_0(\HH)$ telle que $\lev{0}f\neq\emp$ et $U$ 
une s\'election de $\partial f$. 
On remarque tout d'abord que, d'apr\`es le Lemme 
\ref{sous-gradient non nul}, $\left(\forall x\in\clev{0}f\cap
\dom U\right)~Ux\neq0$. 
On notera $G_f^U$ l'op\'erateur univoque d\'efini sur 
$\lev{0}f\cup\dom U$ de la mani\`ere suivante:
\begin{equation}
G_f^Ux=
\begin{cases}
x & \text{si }f(x)\leq0\\
x-\Frac{f(x)}{\norm{Ux}^2}Ux & \text{si }f(x)>0\text{ et }
x\in\dom U
\end{cases}
\end{equation}
\end{notation}

Dans la suite il arrivera que l'on ait \`a consid\'erer des 
op\'erateurs $G_f^U$ o\`u $\dom U$ est un sous-ensemble propre de 
$\dom\partial f$, d'o\`u la d\'efinition assez large
de {\em{s\'election}} donn\'ee ci-dessus.

\begin{remark}
\label{premieres proprietes}
Soient $f\in\Gamma_0(\HH)$ telle que $\lev{0}f\neq\emp$ et $U$ 
une s\'election de $\partial f$.
\begin{enumerate}
\item
$\dom G_f^U\subset\dom f$
\item
\label{selproj et proj}
$\left(\forall x\in\dom U\right)\quad G_f^Ux=P_{H_{Ux}}x$
\item
$\left(\forall x\in\dom G_f^U\right)\left(\forall y\in\lev{0}f
\right)\quad\Scal{y-G_f^Ux}{x-G_f^Ux}\leq0$
\item
$\left(\forall x\in\dom G_f^U\right)\quad G_f^Ux=P_{\lev{0}f}x
\Leftrightarrow G_f^Ux\in\lev{0}f$
\item
$\Fix G_f^U=\lev{0}f$
\end{enumerate}
\end{remark}

\subsubsection{Classe $\mathfrak{T}$}

Nous allons voir que les s\'elections de projecteurs 
sous-diff\'erentiels 
d\'efinies sur tout $\HH$ appartiennent \`a 
une classe d'op\'erateurs d\'ej\`a \'etudi\'ee dans 
\cite{Mor01,Else01}.

\begin{notation}
\dueto{(1.2)}{Mor01}
Soit $(x,y)\in\HH^2$. Notons
\begin{equation}
H(x,y)=\menge{u\in\HH}{\scal{u-y}{x-y}\leq0}.
\end{equation}
On observe que $y=P_{H(x,y)}x$.
\end{notation}

\begin{definition}
\dueto{Definition 2.2.}{Mor01}
On d\'efinit la classe d'op\'erateurs suivante:
\begin{equation}
\mathfrak{T}=\menge{T:\HH\rightarrow\HH}{(\forall x\in\HH)~\Fix T
\subset H(x,Tx)}
\end{equation}
\end{definition}

\begin{remark}
\dueto{Proposition 2.3}{Mor01}
Soit $T:\HH\rightarrow\HH$. Les assertions suivantes sont 
\'equivalentes.
\begin{enumerate}
\item
$T\in\mathfrak{T}$
\item
$(\forall x\in\HH)(\forall y\in\Fix T)\quad\scal{y-Tx}{x-Tx}\leq0$
\item
$2T-\Id$ est une quasi-contraction.
\end{enumerate}
\end{remark}

\begin{notation}
Notons $\mathfrak{G}$ la classe des s\'elections de projecteurs 
sous-diff\'erentiels d\'efinies sur $\HH$:
\begin{equation}
\mathfrak{G}=\menge{G:\HH\rightarrow\HH}{(\exi f\in\Gamma_0(\HH))
(\exi U\text{ s\'election de }
\partial f)~G=G_f^U\text{ et }\clev{0}f\subset\dom U}.
\end{equation}
\end{notation}

\begin{remark}
Soient $f\in\Gamma_0(\HH)$ et $U$ une s\'election de $\partial f$. 
Si $G_f^U\in\mathfrak{G}$ alors 
$\cont f=\dom\partial f=\dom f=\HH$.
\end{remark}

\begin{proposition}
\dueto{Proposition 2.3.}{Mor01}
\begin{equation}
\mathfrak{G}\subset\mathfrak{T}
\end{equation}
\end{proposition}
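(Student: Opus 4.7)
Le plan est d'observer que tout le contenu non trivial est d\'ej\`a \'etabli dans les remarques pr\'ec\'edentes : il ne reste qu'\`a assembler les pi\`eces. Soit $G\in\mathfrak{G}$. Par d\'efinition il existe $f\in\Gamma_0(\HH)$ avec $\lev{0}f\neq\emp$ et une s\'election $U$ de $\partial f$ telles que $G=G_f^U$ et $\clev{0}f\subset\dom U$.

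Je commencerais par v\'erifier que $G$ est bien d\'efini partout sur $\HH$, ce qui est n\'ecessaire pour que l'appartenance \`a $\mathfrak{T}$ ait un sens. Puisque $\dom G_f^U=\lev{0}f\cup\dom U$ et $\clev{0}f\subset\dom U$, on a
\begin{equation}
\dom G=\lev{0}f\cup\dom U\supset\lev{0}f\cup\clev{0}f=\HH.
\end{equation}

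Ensuite je rappellerais les deux propri\'et\'es de la Remarque \ref{premieres proprietes} qui fournissent imm\'ediatement la conclusion : d'une part $\Fix G_f^U=\lev{0}f$, d'autre part pour tout $x\in\dom G_f^U=\HH$ et tout $y\in\lev{0}f$ on a $\scal{y-G_f^Ux}{x-G_f^Ux}\leq0$. En substituant la premi\`ere dans la seconde on obtient
\begin{equation}
(\forall x\in\HH)(\forall y\in\Fix G)\quad\scal{y-Gx}{x-Gx}\leq0,
\end{equation}
ce qui \'equivaut \`a $\Fix G\subset H(x,Gx)$ pour tout $x\in\HH$, c'est-\`a-dire $G\in\mathfrak{T}$ d'apr\`es la Remarque qui pr\'ec\`ede imm\'ediatement l'\'enonc\'e (caract\'erisation (ii) de $\mathfrak{T}$).

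Il n'y a donc pas vraiment d'obstacle : l'essentiel du travail a d\'ej\`a \'et\'e fait dans la Proposition \ref{quasi-contractance} (qui fournit l'in\'egalit\'e de projection sur $\lev{0}f$) et dans l'identification $\Fix G_f^U=\lev{0}f$. Le seul point m\'eritant d'\^etre soulign\'e dans la r\'edaction est le r\^ole de l'hypoth\`ese $\clev{0}f\subset\dom U$ figurant dans la d\'efinition de $\mathfrak{G}$, sans laquelle $G$ ne serait pas d\'efini sur $\HH$ tout entier et l'appartenance \`a $\mathfrak{T}$ ne serait plus formul\'ee correctement.
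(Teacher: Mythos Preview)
Your argument is correct: you verify $\dom G=\HH$ from the hypothesis $\clev{0}f\subset\dom U$, invoke $\Fix G_f^U=\lev{0}f$ and the inequality $\scal{y-G_f^Ux}{x-G_f^Ux}\leq 0$ from Remarque~\ref{premieres proprietes}, and conclude via the characterisation~(ii) of $\mathfrak{T}$. The paper itself does not supply a proof but simply cites \cite[Proposition~2.3]{Mor01}; your proof is precisely the natural one obtained by assembling the pieces the paper has already laid out, and it matches the standard argument in that reference.
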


\begin{proposition}
\dueto{Proposition 2.3}{Else01}
Soit $T\in\mathfrak{T}$.
\begin{enumerate}
\item
$\left(\forall(x,y)\in\HH\times\Fix T\right)\quad\norm{Tx-x}^2
\leq\scal{y-x}{Tx-x}$
\item
$(\forall\alpha\in[0,2])\left(\forall(x,y)\in\HH\times\Fix T\right)
\quad\norm{(1-\alpha)x+\alpha Tx-y}^2\leq\norm{x-y}^2-\alpha(2-
\alpha)\norm{Tx-x}^2$. En particulier $T$ est une quasi-contraction.
\item
$(\forall x\in\HH)\quad\norm{Tx-x}\leq d_{\Fix T}x$
\item
$\Fix T=\underset{x\in\HH}{\bigcap}H(x,Tx)$
\item
$\Fix T$ est convexe ferm\'e. D'o\`u:
\begin{equation}
(\forall x\in\HH)\quad Tx\in\Fix T\Leftrightarrow Tx=P_{\Fix T}x.
\end{equation}
\item
$(\forall\alpha\in[0,1])\quad(1-\alpha)\Id+\alpha T\in\mathfrak{T}$
\end{enumerate}
\end{proposition}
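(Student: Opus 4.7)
La stratégie d'ensemble reposera sur la caractérisation équivalente énoncée à la Remarque précédente: $T\in\mathfrak{T}$ équivaut à dire que, pour tout $x\in\HH$ et tout $y\in\Fix T$, $\scal{y-Tx}{x-Tx}\leq 0$. C'est cette inégalité qui servira de brique élémentaire pour tous les points. Le point (i) s'en déduira en un calcul immédiat: après décomposition $y-Tx=(y-x)+(x-Tx)$, le développement du produit scalaire donne $\scal{y-x}{x-Tx}+\norm{x-Tx}^2\leq 0$, ce qui n'est qu'une réécriture de (i).

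Pour (ii), je développerai $\norm{(1-\alpha)x+\alpha Tx-y}^2=\norm{(x-y)-\alpha(x-Tx)}^2$ et j'injecterai (i) dans le terme croisé afin d'obtenir directement le majorant $\norm{x-y}^2-\alpha(2-\alpha)\norm{Tx-x}^2$; le choix $\alpha=1$ redonnera la quasi-contractance. Pour (iii), il suffira d'appliquer Cauchy--Schwarz à (i) pour obtenir $\norm{Tx-x}\leq\norm{y-x}$ pour tout $y\in\Fix T$, puis de passer à l'infimum sur $\Fix T$.

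Pour (iv), l'inclusion $\Fix T\subset\bigcap_{x\in\HH}H(x,Tx)$ n'est que la définition de $\mathfrak{T}$; réciproquement, si $z$ appartient à cette intersection, le choix particulier $x=z$ donne $\scal{z-Tz}{z-Tz}\leq 0$, donc $z=Tz$. Le point (v) en découle aussitôt: d'après (iv), $\Fix T$ est intersection de demi-espaces fermés, donc fermé convexe. Quant à l'équivalence $Tx\in\Fix T\Leftrightarrow Tx=P_{\Fix T}x$, elle résultera de (iii): si $Tx\in\Fix T$, alors $\norm{Tx-x}\leq d_{\Fix T}x\leq\norm{Tx-x}$, donc $Tx$ réalise la distance minimale, et la réciproque est triviale puisque $P_{\Fix T}x\in\Fix T$.

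Le point (vi) sera le plus délicat. Posant $T_\alpha=(1-\alpha)\Id+\alpha T$, je calculerai $x-T_\alpha x=-\alpha(Tx-x)$ et $y-T_\alpha x=(y-x)+\alpha(x-Tx)$, d'où
\begin{equation*}
\scal{y-T_\alpha x}{x-T_\alpha x}=-\alpha\scal{y-x}{Tx-x}+\alpha^2\norm{Tx-x}^2.
\end{equation*}
L'ingrédient clé sera (i) appliqué au premier terme, qui, joint à la contrainte $\alpha\in[0,1]$, majore cette quantité par $\alpha(\alpha-1)\norm{Tx-x}^2\leq 0$. Il restera à constater que $\Fix T_\alpha=\Fix T$ pour $\alpha\in\left]0,1\right]$ (via $T_\alpha x=x\Leftrightarrow\alpha(Tx-x)=0$) et à traiter à part le cas trivial $\alpha=0$ où $T_\alpha=\Id$. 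L'obstacle principal sera d'identifier clairement pourquoi $\alpha\in[0,1]$ s'impose ici alors que (ii) autorisait $[0,2]$: c'est précisément le coefficient $\alpha(\alpha-1)$ apparu dans ce calcul, par contraste avec le $-\alpha(2-\alpha)$ de (ii), qui force cette restriction.
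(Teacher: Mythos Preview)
The paper does not supply a proof of this proposition; it simply cites \cite[Proposition~2.3]{Else01}. Your argument is correct and entirely standard, so there is nothing to compare against.

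One minor remark on (v): your route via (iii) is valid, but a slightly more direct path is available. Once $\Fix T$ is known to be closed convex, the variational characterisation of the metric projection says that $p=P_{\Fix T}x$ if and only if $p\in\Fix T$ and $\scal{y-p}{x-p}\leq 0$ for every $y\in\Fix T$. When $Tx\in\Fix T$, this last inequality with $p=Tx$ is \emph{exactly} the defining property of $\mathfrak{T}$, so $Tx=P_{\Fix T}x$ follows without invoking (iii) or the uniqueness of the minimiser. Either way the conclusion is the same.
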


\begin{remark}
Soient $f\in\Gamma_0(\HH)$ et $U$ une s\'election de $\partial f$. 
On suppose $G_f^U\in\mathfrak{G}$.
\begin{equation}
(\forall x\in\HH)\quad H\left(x,G_f^Ux\right)=
\begin{cases}
\HH & \text{si }f(x)\leq0\\
H_{Ux} & \text{si }f(x)>0.
\end{cases}
\end{equation}
On retrouve le point \ref{selproj et proj} de la Remarque 
\ref{premieres proprietes}: 
$(\forall x\in\dom U)~G_f^Ux=P_{H_{Ux}}x$.
\end{remark}

\begin{proof}
Soient $x\in\clev{0}f$ et $y\in\HH$. Alors
\begin{align}
\Scal{y-G_f^Ux}{x-G_f^Ux}
&=\Scal{y-x+\Frac{f(x)}{\norm{Ux}^2}Ux}{\Frac{f(x)}{\norm{Ux}^2}Ux}
\nonumber\\
&=\Frac{f(x)}{\norm{Ux}^2}(\scal{y-x}{Ux}+f(x)).
\end{align}
\end{proof}

\begin{corollary}
\label{prop heritees de T}
Soient $f\in\Gamma_0(\HH)$ et $U$ une s\'election de $\partial f$. 
On suppose $G_f^U\in\mathfrak{G}$.
\begin{enumerate}
\item
$G_f^U$ est une quasi-contraction. (Nous l'avions d\'ej\`a montr\'e 
\`a la Proposition \ref{quasi-contractance}.)
\item
$\left(\forall x\in\clev{0}f\right)\quad\Frac{f(x)}{\norm{Ux}}\leq
d_{\lev{0}f}(x)$
\item
\label{ineg dist}
$\lev{0}f=\underset{x\in\clev{0}f}{\bigcap}H_{Ux}$
\item
\label{moyenne T}
$(\forall\alpha\in[0,1])\quad(1-\alpha)\Id+\alpha G_f^U\in
\mathfrak{T}$
\end{enumerate}
\end{corollary}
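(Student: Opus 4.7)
Mon plan est d'obtenir chacune des quatre assertions comme cons\'equence directe de l'appartenance $G_f^U \in \mathfrak{G} \subset \mathfrak{T}$, en appliquant point par point les propri\'et\'es de la Proposition pr\'ec\'edente (celle tir\'ee de \cite{Else01}), combin\'ees au calcul explicite de $H(x,G_f^Ux)$ fourni par la Remarque juste avant le corollaire et au fait d\'ej\`a \'etabli que $\Fix G_f^U = \lev{0}f$.

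Pour (i), il suffit d'invoquer la quasi-contractance de tout op\'erateur de $\mathfrak{T}$ (point (ii) de la proposition avec $\alpha=1$); c'est juste une reformulation, et on peut renvoyer \`a la Proposition \ref{quasi-contractance} pour une preuve directe d\'ej\`a vue. Pour (iv), la stabilit\'e de $\mathfrak{T}$ par moyenne avec l'identit\'e (point (vi) de la proposition) donne imm\'ediatement $(1-\alpha)\Id + \alpha G_f^U \in \mathfrak{T}$ pour tout $\alpha\in[0,1]$.

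Les deux points int\'eressants sont (ii) et (iii). Pour (ii), je partirais de l'in\'egalit\'e $\|Tx-x\| \leq d_{\Fix T}(x)$ appliqu\'ee \`a $T = G_f^U$. Pour $x \in \clev{0}f$, le membre de gauche se calcule explicitement :
\begin{equation}
\big\|G_f^U x - x\big\| = \left\|\Frac{f(x)}{\norm{Ux}^2}Ux\right\| = \Frac{f(x)}{\norm{Ux}},
\end{equation}
puisque $f(x)>0$ et $Ux\neq 0$ (Lemme \ref{sous-gradient non nul}). Comme $\Fix G_f^U = \lev{0}f$, le membre de droite est $d_{\lev{0}f}(x)$, d'o\`u la conclusion.

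Pour (iii), j'utiliserais l'identit\'e $\Fix T = \bigcap_{x\in\HH} H(x,Tx)$ (point (iv) de la proposition). La Remarque pr\'ec\'edant le corollaire donne $H(x,G_f^Ux) = \HH$ si $f(x)\leq 0$ et $H(x,G_f^Ux) = H_{Ux}$ si $f(x)>0$. Les termes de l'intersection correspondant \`a $x\in\lev{0}f$ sont donc triviaux et peuvent \^etre \'elimin\'es, ce qui laisse exactement $\bigcap_{x\in\clev{0}f} H_{Ux}$. Aucune \'etape ne pr\'esente de difficult\'e s\'erieuse : le corollaire est une simple sp\'ecialisation m\'ecanique des propri\'et\'es g\'en\'erales de $\mathfrak{T}$ \`a l'op\'erateur $G_f^U$.
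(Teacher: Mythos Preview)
Your proposal is correct and coincides with the paper's (implicit) argument: the corollary is stated without proof precisely because each item is the direct specialization of the corresponding item of the preceding Proposition on $\mathfrak{T}$ to $T=G_f^U$, using $\Fix G_f^U=\lev{0}f$ and the Remark computing $H(x,G_f^Ux)$. Your explicit computations for (ii) and (iii) are exactly the missing one-line justifications the reader is expected to fill in.
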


\subsection{Exemples}

\begin{example}
Soit $u\in\HH$. On consid\`ere la fonction $f:\HH\rightarrow\HH:x
\mapsto\scal{x}{u}$. Alors
\begin{equation}
G_f:\HH\rightarrow\HH:x\mapsto
\begin{cases}
x & \text{si }\scal{x}{u}\leq0\\
x-\Frac{\scal{x}{u}}{\norm{u}^2}u & \text{si }\scal{x}{u}>0
\end{cases}.
\end{equation}
D'apr\`es \dueto{Example 28.16}{Livre1}, $G_f$ est le projecteur 
sur $\lev{0}f$:
\begin{equation}
G_{\scal{\cdot}{u}}=P_{\lev{0}f}.
\end{equation}
\end{example}

\begin{example}
\label{proj distance carree}
Soit $C\subset\HH$ un convexe ferm\'e non vide. 
Alors $\lev{0}d_C^2=C$ et $\nabla d_C^2=2(\Id-P_C)$ 
\dueto{Corollary 12.30}{Livre1}. On en d\'eduit 
que
\begin{equation}
G_{d_C^2}=\Frac{\Id+P_C}{2}.
\end{equation}
On remarque que, comme $P_C$ est une contraction, $G_{d_C^2}$ est 
une contraction ferme \dueto{Proposition 4.2}{Livre1}.
En particulier, pour $C=\{0\}$,
\begin{equation}
G_{\norm{\cdot}^2}=\Frac{\Id}{2}.
\end{equation}
\end{example}

\begin{example}
\dueto{Remark 2.4}{Mor01}
On pose $f:\RR\rightarrow\RR:x\mapsto\max\{x+1,
2x+1\}$. Alors 
\begin{equation}
G_f:\RR\rightarrow\RR:x\mapsto
\begin{cases}
x & \text{si }x\leq-1\\
-1 & \text{si }-1<x<0\\
\left[-1,-\frac{1}{2}\right] & \text{si }x=0\\
-\Frac{1}{2} & \text{si }x>0
\end{cases}.
\end{equation}
$f$ est convexe continue sur $\RR$ et diff\'erentiable sur $\RR
\backslash\{0\}$ mais aucune s\'election de $G_f$ n'est continue
 en $0$.
\end{example}

\begin{example}
Soit $\eta\in\RR_{++}$. On pose
\begin{equation}
f:\RR\rightarrow\RX:x\mapsto
\begin{cases}
\eta-\sqrt{x} & \text{si }x>0\\
+\infty & \text{si }x\leq0.
\end{cases}
\end{equation}
Alors $\lev{0}f=\left[\eta^2,+\infty\right[$, 
$\dom G_f=\RR_{++}$ et
\begin{equation}
G_f:\RR_{++}\rightarrow\RR:x\mapsto\
\begin{cases}
x & \text{si }x\geq\eta^2\\
2\eta\sqrt{x}-x & \text{si }0<x<\eta^2
\end{cases}.
\end{equation}
$G_f$ est continu sur $\RR_{++}$ mais pas lipschitzien.
\end{example}

\begin{example}
Soit $\eta\in\left]1,+\infty\right[$. On pose 
$f:\RR\rightarrow\RR:x\mapsto
\sqrt{1+x^2}-\eta$. Alors
\begin{equation}
G_fx=
\begin{cases}
x & \text{si }|x|\leq\sqrt{\eta^2-1}\\
\Frac{x^2-1+\eta\sqrt{1+x^2}}{2x} & \text{si }|x|>
\sqrt{\eta^2-1}.
\end{cases}
\end{equation}
De plus
\begin{equation}
(\forall x\in\RR)~G_f'(x)=
\begin{cases}
1 & \text{si }|x|\leq\sqrt{\eta^2-1}\\
\Frac{3}{2}+\Frac{\eta}{\sqrt{1+x^2}}\left(1+\Frac{1}{x^2}\right)-
\Frac{1}{2x^2}-\Frac{\eta}{2x^2\sqrt{1+x^2}} & \text{si }|x|>
\sqrt{\eta^2-1}.
\end{cases}
\end{equation}
Pour tout $x\in\RR$ tel que $|x|>\sqrt{\eta^2-1}$, on a 
$\Frac{3}{2}-\Frac{1}{\eta^2-1}\leq
G_f'(x)\leq\Frac{5}{2}+\Frac{1}{\eta^2-1}$. $G_f'$ est donc 
born\'ee. Ainsi $f$ est convexe diff\'erentiable lipschitzienne 
et $G_f$ est lipschitzien.
\end{example}

\begin{example}
\label{ex log}
On pose
\begin{equation}
f:\RR\rightarrow\RX:x\mapsto
\begin{cases}
-\operatorname{ln}(x) & \text{si }x>0\\
+\infty & \text{sinon.}
\end{cases}
\end{equation}
Alors $\lev{0}f=[1,+\infty[$ et
\begin{equation}
(\forall x\in\RR_{++})\quad G_fx=
\begin{cases}
x & \text{si }x\geq1\\
x-x\operatorname{ln}(x) & \text{si }0<x<1.
\end{cases}
\end{equation}
\end{example}

\subsection{R\'e\'ecriture de l'algorithme de projection 
sous-diff\'erentielle}

Rappelons la m\'ethode de projection sous-diff\'erentielle.

\begin{proposition}
\dueto{Corollary 6.10}{Mor01}
Soient $(f_i)_{i\in I}$ une famille d\'enombrable de fonctions 
convexes continues de $\HH$ dans $\RR$ telle que 
$(\forall i\in I)~\lev{0}f_i\neq\emp$, $U_i$ des s\'elections de 
$\partial f_i$ (pour $i\in I$), $x_0\in\HH$ et 
$\varepsilon\in]0,1]$. On suppose que l'injection 
$i:\NN\rightarrow I$ v\'erifie
\begin{equation}
(\forall i\in I)(\exi M_i\in\NN\backslash\{0\})(\forall n\in\NN)
\quad i\in\{i(n),...i(n+M_i-1)\},
\end{equation}
que, pour tout $i$ dans $I$, $\partial f_i$ est born\'e sur les 
born\'es et que $S=\underset{i\in I}{\bigcap}\lev{0}f_i\neq\emp$. 
Alors chaque orbite de l'algorithme
\begin{equation}
x_{n+1}=x_n+\lambda_n\left(G_{f_{i(n)}}^{U_{i(n)}}x_n-x_n\right)
\text{ o\`u }\lambda_n\in[\varepsilon,2-\varepsilon]
\end{equation}
converge faiblement vers un point de $S$.
\end{proposition}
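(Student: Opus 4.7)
Posons $T_n = G_{f_{i(n)}}^{U_{i(n)}}$. Ces opérateurs appartiennent à $\mathfrak{G} \subseteq \mathfrak{T}$ (puisque chaque $f_i$ est continue et finie sur $\HH$, donc $\clev{0} f_i \subseteq \dom \partial f_i = \HH$) et admettent tous $S$ dans leur ensemble de points fixes. En appliquant le point (ii) de la Proposition précédente, établie pour tout $T \in \mathfrak{T}$, avec $\alpha = \lambda_n$ et $y \in S$, j'obtiendrais
\begin{equation}
\norm{x_{n+1} - y}^2 \leq \norm{x_n - y}^2 - \lambda_n(2 - \lambda_n)\,\norm{T_n x_n - x_n}^2.
\end{equation}
Comme $\lambda_n(2-\lambda_n) \geq \varepsilon(2-\varepsilon) > 0$, ceci entraîne la monotonie de Fejér de $(x_n)$ par rapport à $S$ ; par sommation télescopique, $(x_n)$ est bornée, $(\norm{x_n - y})_n$ converge pour chaque $y \in S$, et $\sum_n \norm{T_n x_n - x_n}^2 < +\infty$, d'où $\norm{T_n x_n - x_n} \to 0$ et $\norm{x_{n+1} - x_n} \to 0$.

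Le point le plus délicat sera d'en déduire que, pour chaque $i \in I$, $f_i(x_n)^+ \to 0$. La définition de $G_{f_{i(n)}}^{U_{i(n)}}$ fournit l'identité $f_{i(n)}(x_n)^+ = \norm{U_{i(n)} x_n}\,\norm{T_n x_n - x_n}$. Fixant $i \in I$, le long de la sous-suite $(n_k)_k$ telle que $i(n_k) = i$, la bornitude de $(x_{n_k})$ et celle de $\partial f_i$ sur les bornés donnent $\sup_k \norm{U_i x_{n_k}} < +\infty$, donc $f_i(x_{n_k})^+ \to 0$. Pour transférer cette convergence à la suite entière, j'invoquerais la propriété de balayage de l'injection : à chaque $n$ on associe un indice $n' \in \{n, \ldots, n+M_i-1\}$ vérifiant $i(n') = i$ ; alors $\norm{x_n - x_{n'}} \leq (M_i - 1) \max_{n \leq k < n+M_i-1} \norm{x_{k+1} - x_k} \to 0$, et comme $f_i$ est convexe finie continue sur $\HH$ donc lipschitzienne sur tout borné, $|f_i(x_n) - f_i(x_{n'})| \to 0$, d'où $f_i(x_n)^+ \to 0$.

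Si $\bar{x}$ est la limite faible d'une sous-suite $(x_{n_k})$, la semi-continuité inférieure faible de chaque $f_i$ (conséquence de sa convexité et de sa continuité) donne $f_i(\bar{x}) \leq \liminf_k f_i(x_{n_k}) \leq \limsup_n f_i(x_n)^+ = 0$, d'où $\bar{x} \in S$. La suite $(x_n)$ étant Fejér-monotone par rapport à $S \neq \emp$ convexe fermé et tous ses points d'accumulation faible se trouvant dans $S$, le lemme d'Opial conclut à la convergence faible de $(x_n)$ vers un point de $S$.
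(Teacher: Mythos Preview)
Your argument is correct and is precisely the standard quasi-Fej\'er argument underlying \cite[Corollary~6.10]{Mor01}: the $\mathfrak{T}$-inequality yields Fej\'er monotonicity and $\norm{T_nx_n-x_n}\to 0$, the quasi-cyclic control together with boundedness of each $\partial f_i$ on bounded sets gives $f_i(x_n)^+\to 0$ for every $i$, and weak lower semicontinuity plus Opial's lemma closes the proof. The paper itself does not supply a proof of this proposition --- it simply quotes the result from \cite{Mor01} --- so there is nothing further to compare.
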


\section{Propri\'et\'es alg\'ebriques}

\'Etudions \`a pr\'esent les propri\'et\'es alg\'ebriques du 
projecteur sous-diff\'erentiel, \textit{i.e.} comment s'exprime 
le projecteur sous-diff\'erentiel d'une compos\'ee, d'une somme, 
d'une inf-convolution, etc.

\subsection{Propri\'et\'es li\'ees \`a la composition}

\begin{proposition}
\label{proj produit externe}
Soient $f\in\Gamma_0(\HH)$ telle que $\lev{0}f\neq\emp$, 
$\lambda\in\RR_{++}$ et $U$ une s\'election de 
$\partial f$. Alors $G_{\lambda f}$ et 
$G_{\lambda f}^{\lambda U}$ sont bien d\'efinis, et
\begin{equation}
G_{\lambda f}=G_f
\end{equation}
\begin{equation}
G_{\lambda f}^{\lambda U}=G_f^U
\end{equation}
\end{proposition}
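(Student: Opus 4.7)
\noindent
Mon plan de démonstration repose sur deux observations élémentaires concernant la multiplication par un scalaire strictement positif. Premièrement, puisque $\lambda>0$, on a $\lev{0}(\lambda f)=\lev{0}f\neq\emp$, de sorte que $G_{\lambda f}$ est bien défini ; de plus, $\lambda f\in\Gamma_0(\HH)$. Deuxièmement, la règle de calcul sous-différentiel donne $\partial(\lambda f)(x)=\lambda\,\partial f(x)$ pour tout $x\in\HH$, d'où $\dom\partial(\lambda f)=\dom\partial f$ et $\lambda U$ est une sélection de $\partial(\lambda f)$ de même domaine que $U$ ; ceci assure que $G_{\lambda f}^{\lambda U}$ est bien défini.

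Je procéderais ensuite par vérification directe, en distinguant les deux branches de la définition. Pour $x$ tel que $f(x)\leq0$, on a aussi $\lambda f(x)\leq0$, et les deux membres de chaque égalité se réduisent à $\{x\}$ (ou à $x$ pour les sélections). Pour $x\in\clev{0}f$, le calcul crucial est l'annulation d'échelle : pour tout $u\in\partial f(x)$, en posant $v=\lambda u\in\partial(\lambda f)(x)$,
\begin{equation}
x-\Frac{(\lambda f)(x)}{\norm{v}^2}v
=x-\Frac{\lambda f(x)}{\lambda^2\norm{u}^2}\lambda u
=x-\Frac{f(x)}{\norm{u}^2}u.
\end{equation}
En faisant varier $u$ dans $\partial f(x)$ (ce qui revient à faire varier $v$ dans $\partial(\lambda f)(x)$), on obtient $G_{\lambda f}x=G_fx$, ce qui établit la première égalité. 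La seconde s'obtient par la même identité appliquée avec $u=Ux$ et $v=\lambda Ux$.

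Il n'y a pas véritablement de difficulté technique à surmonter ici : le seul point à ne pas négliger est de justifier que le domaine de $G_{\lambda f}^{\lambda U}$ coïncide bien avec celui de $G_f^U$, ce qui découle immédiatement de $\dom(\lambda U)=\dom U$ et de $\lev{0}(\lambda f)=\lev{0}f$. La démonstration tient donc essentiellement dans le calcul d'invariance d'échelle ci-dessus.
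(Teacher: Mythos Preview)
Your proposal is correct and follows essentially the same approach as the paper: both rely on $\lev{0}(\lambda f)=\lev{0}f$, the rule $\partial(\lambda f)=\lambda\,\partial f$, and the scale-cancellation computation $\frac{\lambda f(x)}{\|\lambda u\|^2}\lambda u=\frac{f(x)}{\|u\|^2}u$. If anything, your write-up is slightly more complete, since you explicitly treat the multivalued identity $G_{\lambda f}=G_f$ whereas the paper only spells out the selection case.
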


\begin{proof}
Tout d'abord $G_{\lambda f}$ est bien d\'efini puisque $\lambda 
f\in\Gamma_0(\HH)$ et $\lev{0}(\lambda f)=\lev{0}f\neq\emp$. Comme 
$\partial(\lambda f)=\lambda\partial f$, d'une
part $\dom G_{\lambda f}=\dom G_f$, d'autre part, pour toute
s\'election $U$ de $\partial f$, $\dom(\lambda U)=\dom U$ et 
$\lambda U$ est une s\'election de $\partial(\lambda f)$,
$G_{\lambda f}^{\lambda U}$ est donc bien d\'efini. Enfin
$(\forall x\in\clev{0}f\cap\dom U)~G_{\lambda f}^{\lambda U}x
=x-\Frac{\lambda f(x)}{\norm{\lambda Ux}^2}\lambda Ux
=x-\Frac{f(x)}{\norm{Ux}^2}Ux=G_f^Ux$.
\end{proof}

\begin{proposition}
\label{proj composition gauche}
Soient $f\in\Gamma_0(\HH)$ \`a valeurs r\'eelles telle que $\lev{0}f
\neq\emp$ et $\phi:\RR\rightarrow\RR$ strictement croissante sur 
$f(\HH)$ avec $\phi(0)=0$. On suppose que $f$ est 
Fr\'echet-diff\'erentiable sur $\clev{0}f$ et que $\phi$ est 
diff\'erentiable sur $\RR$. Alors $\dom G_{\phi\circ f}=\HH$ et
\begin{equation}
G_{\phi\circ f}:\HH\rightarrow\HH:x\mapsto
\begin{cases}
x & \text{si }f(x)\leq0\\
x+\Frac{\phi(f(x))}{f(x)\phi'(f(x))}\left(G_fx-x\right) & 
\text{si }f(x)>0
\end{cases}.
\end{equation}
\end{proposition}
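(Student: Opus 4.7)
L'approche consiste d'abord à identifier l'ensemble $\lev{0}(\phi\circ f)$ avec $\lev{0}f$, puis à utiliser la règle de la chaîne pour exprimer $\nabla(\phi\circ f)$ en fonction de $\nabla f$, et enfin à réinjecter dans la définition de $G_{\phi\circ f}$ pour faire apparaître $G_fx-x$.

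Je commencerais par observer que $\phi(0)=0$ combiné à la stricte monotonie de $\phi$ sur $f(\HH)$ fournit l'équivalence $\phi(f(x))\leq 0\Leftrightarrow f(x)\leq 0$. On en déduit d'une part que $\lev{0}(\phi\circ f)=\lev{0}f$ (donc non vide, de sorte que $G_{\phi\circ f}$ est bien défini), d'autre part que $\clev{0}(\phi\circ f)=\clev{0}f$. Comme $f$ est à valeurs réelles, $\phi\circ f$ est à valeurs réelles et $\dom(\phi\circ f)=\HH$~; combiné à $\clev{0}(\phi\circ f)\subset\dom\nabla(\phi\circ f)$ (voir ci-dessous), ceci donne $\dom G_{\phi\circ f}=\HH$.

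Sur $\clev{0}f$, $f$ est Fréchet-différentiable par hypothèse et $\phi$ est différentiable sur $\RR$, donc par composition $\phi\circ f$ est Fréchet-différentiable avec
\begin{equation}
\nabla(\phi\circ f)(x)=\phi'(f(x))\,\nabla f(x).
\end{equation}
La stricte croissance de $\phi$ aux points $f(x)>0$ force $\phi'(f(x))>0$ (sans quoi, $\phi$ étant différentiable convexe/monotone, on contredirait la monotonie stricte localement~; c'est le point à soigner), si bien que $\nabla(\phi\circ f)(x)\neq 0$ dès que $\nabla f(x)\neq 0$, ce qui est le cas sur $\clev{0}f$ par le Lemme~\ref{sous-gradient non nul} appliqué à $f$.

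Pour $x\in\clev{0}f$, il suffit alors d'injecter dans l'expression de $G_{\phi\circ f}x$ fournie par la Remarque~\ref{projections}~:
\begin{align}
G_{\phi\circ f}x
&=x-\Frac{\phi(f(x))}{\phi'(f(x))^2\norm{\nabla f(x)}^2}\phi'(f(x))\nabla f(x)\nonumber\\
&=x-\Frac{\phi(f(x))}{\phi'(f(x))\norm{\nabla f(x)}^2}\nabla f(x).
\end{align}
Par ailleurs $G_fx-x=-\Frac{f(x)}{\norm{\nabla f(x)}^2}\nabla f(x)$, donc $-\Frac{\nabla f(x)}{\norm{\nabla f(x)}^2}=\Frac{G_fx-x}{f(x)}$, d'où la formule annoncée en substituant. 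Pour $x\in\lev{0}f$, les deux membres valent $x$ trivialement, ce qui clôt la disjonction.

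L'obstacle principal, plus technique que conceptuel, est la justification que $\phi'(f(x))\neq 0$ lorsque $f(x)>0$ (nécessaire pour que la formule ait un sens et que $\nabla(\phi\circ f)$ ne s'annule pas sur $\clev{0}(\phi\circ f)$)~; c'est ici qu'intervient de manière cruciale l'hypothèse de stricte croissance de $\phi$ sur $f(\HH)$, le reste n'étant qu'une manipulation algébrique directe.
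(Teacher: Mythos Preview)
Your approach is essentially the same as the paper's: apply the chain rule $\nabla(\phi\circ f)(x)=\phi'(f(x))\nabla f(x)$ on $\clev{0}f$ and substitute into the definition of $G_{\phi\circ f}$ to recover $G_fx-x$. The paper's proof is terser---it skips the identification $\lev{0}(\phi\circ f)=\lev{0}f$ and the discussion of $\dom G_{\phi\circ f}$ that you carefully spell out---and, notably, it does not justify that $\phi'(f(x))\neq 0$ either; your flagging of this as ``le point \`a soigner'' is warranted (strict monotonicity plus differentiability does not force a nonvanishing derivative, e.g.\ $\phi(t)=(t-1)^3+1$), so this is a shared tacit assumption rather than a gap specific to your argument.
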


\begin{proof}
Soit $x\in\clev{0}f$. Alors $\nabla(\phi\circ f)(x)=
\phi'(f(x))\nabla f(x)$, d'o\`u
\begin{align}
G_{\phi\circ f}x
&=x-\Frac
{\phi(f(x))}{\phi'(f(x))f(x)}\Frac{f(x)}{\norm{\nabla f(x)}^2}
\nabla f(x)\nonumber\\
&=x+\Frac{\phi(f(x))}{f(x)\phi'(f(x))}\left(G_fx-x
\right)
\end{align}
\end{proof}

\begin{corollary}
\label{proj puissance}
Soient $f\in\Gamma_0(\HH)$ \`a valeurs r\'eelles telle que $\lev{0}f
\neq\emp$ et $\alpha\in\RR_{++}$. On suppose $f\geq0$ et $f$ 
Fr\'echet-diff\'erentiable sur $\clev{0}f$. Alors
\begin{equation}
G_{f^{1/\alpha}}=(1-\alpha)\Id+\alpha G_f.
\end{equation}
\end{corollary}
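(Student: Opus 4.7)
Le plan est d'imiter la preuve de la Proposition \ref{proj composition gauche} en prenant $\phi:t\mapsto t^{1/\alpha}$, en tirant parti de l'hypothèse $f\geq0$ : seules les valeurs de $\phi$ et de $\phi'$ sur $f(\clev{0}f)\subset\RPP$ interviennent dans le calcul final, si bien que la non-différentiabilité éventuelle de $\phi$ en $0$ (lorsque $\alpha>1$) n'est pas un obstacle.

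D'abord je remarquerais que, puisque $f\geq0$ et que $t\mapsto t^{1/\alpha}$ est positive et s'annule uniquement en $0$, on a $\lev{0}(f^{1/\alpha})=\lev{0}f\neq\emp$ et $\clev{0}(f^{1/\alpha})=\clev{0}f$. Sur ce dernier ensemble, $f$ est strictement positive et Fréchet-diff\'erentiable par hypothèse, donc $f^{1/\alpha}$ l'est aussi par composition, avec
\[
\nabla(f^{1/\alpha})(x)=\frac{1}{\alpha}f(x)^{1/\alpha-1}\nabla f(x).
\]

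Le cœur de la d\'emonstration est alors la simplification algébrique suivante, pour $x\in\clev{0}f$ :
\[
G_{f^{1/\alpha}}x
=x-\frac{f(x)^{1/\alpha}}{\|\nabla(f^{1/\alpha})(x)\|^2}\nabla(f^{1/\alpha})(x)
=x-\frac{\alpha f(x)}{\|\nabla f(x)\|^2}\nabla f(x)
=x+\alpha\bigl(G_f x-x\bigr),
\]
soit $\bigl((1-\alpha)\Id+\alpha G_f\bigr)(x)$; autrement dit, le coefficient $\phi(f(x))/(f(x)\phi'(f(x)))$ apparaissant dans la Proposition \ref{proj composition gauche} se r\'eduit exactement à $\alpha$. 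Sur $\lev{0}f=\Fix G_f$, l'\'egalit\'e est imm\'ediate puisque chaque membre vaut $x$.

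Le seul point délicat est de s'assurer que $f^{1/\alpha}$ appartient bien à $\Gamma_0(\HH)$, pour que $G_{f^{1/\alpha}}$ soit défini au sens donné dans la section 1 : cela est garanti lorsque $\alpha\in\left]0,1\right]$, car $t\mapsto t^{1/\alpha}$ est alors convexe croissante sur $\RP$ et la composition avec $f\in\Gamma_0(\HH)$ est convexe semi-continue inférieurement. Pour $\alpha>1$, la convexité de $f^{1/\alpha}$ n'est pas automatique, mais le calcul ci-dessus reste valide en interprétant l'identité du corollaire comme une égalité entre opérateurs définis ponctuellement par la formule explicite de la Remarque \ref{projections}.
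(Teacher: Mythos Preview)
Your proof is correct and follows essentially the same route as the paper: apply the composition formula of Proposition~\ref{proj composition gauche} with $\phi(t)=|t|^{1/\alpha}$ (equivalently $t^{1/\alpha}$ on $f(\HH)\subset\RP$) and observe that the coefficient $\phi(f(x))/(f(x)\phi'(f(x)))$ collapses to $\alpha$. You are in fact more careful than the paper, which invokes Proposition~\ref{proj composition gauche} in one line without commenting on the non-differentiability of $\phi$ at $0$ or the possible failure of convexity of $f^{1/\alpha}$ when $\alpha>1$.
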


\begin{proof}
Prendre $\phi:\RR\rightarrow\RR:t\mapsto |t|^{1/\alpha}$ dans la 
Proposition
\ref{proj composition gauche}.
\end{proof}

\begin{remark}
Ce r\'esultat est \`a comparer avec le Corollaire 
\ref{prop heritees de T}\ref{moyenne T}.
\end{remark}

\begin{corollary}
\label{proj puissance distance}
Soient $C$ une partie convexe ferm\'ee non vide de $\HH$ et 
$\alpha\in\RR_{++}$. Alors
\begin{equation}
G_{d_C^{1/\alpha}}=(1-\alpha)\Id+\alpha P_C.
\end{equation}
En particulier, si $\alpha\in]0,1]$ alors $G_{d_C^{1/\alpha}}$ est 
$\alpha$-moyenn\'e, donc est une contraction 
\dueto{Remark 4.24 (i)}{Livre1}, et m\^eme une contraction ferme 
quand $\alpha\in\left]0,1/2\right]\cup\{1\}$ 
\dueto{Remark 4.27}{Livre1}.
\end{corollary}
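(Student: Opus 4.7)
Le plan est d'appliquer directement le Corollaire \ref{proj puissance} avec $f=d_C$, puis de conclure grâce à l'Exemple \ref{proj distance}. Je vérifie d'abord les hypothèses du Corollaire \ref{proj puissance}: la fonction $d_C$ appartient bien à $\Gamma_0(\HH)$, est à valeurs réelles positives, vérifie $\lev{0}d_C=C\neq\emp$, et est Fréchet-différentiable sur $\complement C=\clev{0}d_C$ par \dueto{Proposition 18.22(iii)}{Livre1} (fait déjà rappelé dans la preuve de l'Exemple \ref{proj distance}, qui donne explicitement $\nabla d_C(x)=(x-P_Cx)/d_C(x)$). Le Corollaire \ref{proj puissance} livre alors $G_{d_C^{1/\alpha}}=(1-\alpha)\Id+\alpha G_{d_C}$, et l'Exemple \ref{proj distance}, qui affirme $G_{d_C}=P_C$, permet de conclure à la formule annoncée.

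Pour l'assertion sur le caractère $\alpha$-moyenné quand $\alpha\in\left]0,1\right]$, il suffit d'invoquer la définition: puisque $P_C$ est non-expansif, l'opérateur $(1-\alpha)\Id+\alpha P_C$ est par construction $\alpha$-moyenné.

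Pour la contraction ferme, j'utiliserais l'équivalence classique selon laquelle un opérateur est fermement non-expansif si et seulement s'il est $\frac{1}{2}$-moyenné. Ainsi, tout opérateur $\alpha$-moyenné avec $\alpha\leq 1/2$ est a fortiori $\frac{1}{2}$-moyenné, ce qui traite le cas $\alpha\in\left]0,1/2\right]$. Le cas $\alpha=1$ donne simplement $G_{d_C^{1/\alpha}}=P_C$, qui est fermement non-expansif.

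Je n'anticipe aucune réelle difficulté: l'énoncé découle presque immédiatement du Corollaire \ref{proj puissance} et de l'Exemple \ref{proj distance}. Le seul point à surveiller est la différentiabilité \emph{Fréchet} (et non seulement Gâteaux) de $d_C$ sur $\complement C$, qui conditionne l'applicabilité du Corollaire \ref{proj puissance} mais dont la validité est un résultat standard.
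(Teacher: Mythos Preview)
Your proof is correct and follows exactly the paper's approach: the paper's proof consists of the single line ``Prendre $f=d_C$ dans le Corollaire \ref{proj puissance}'', which together with the Exemple \ref{proj distance} identity $G_{d_C}=P_C$ is precisely your argument. Your version is simply more explicit about checking the hypotheses and unpacking the averagedness/firm nonexpansiveness claims that the paper leaves to citations.
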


\begin{proof}
Prendre $f=d_C$ dans le Corollaire \ref{proj puissance}.
\end{proof}

\begin{remark}
On retrouve Exemple \ref{proj distance} et Exemple 
\ref{proj distance carree} pour les valeurs $\alpha=1$ et 
$\alpha=1/2$ respectivement.
\end{remark}

\begin{corollary}
\label{proj puissance norme}
Soit $\alpha\in\RR_{++}$. Alors
\begin{equation}
G_{\norm{\cdot}^{1/\alpha}}=(1-\alpha)\Id.
\end{equation}
\end{corollary}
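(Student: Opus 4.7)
Mon plan est tout simplement d'appliquer le Corollaire \ref{proj puissance distance} au cas particulier $C=\{0\}$. En effet, $\{0\}$ est un convexe ferm\'e non vide de $\HH$, et l'on a les identifications imm\'ediates $d_{\{0\}}=\norm{\cdot}$ et $P_{\{0\}}\equiv0$. Le Corollaire \ref{proj puissance distance} donne alors
\begin{equation}
G_{\norm{\cdot}^{1/\alpha}}=G_{d_{\{0\}}^{1/\alpha}}=(1-\alpha)\Id+\alpha P_{\{0\}}=(1-\alpha)\Id.
\end{equation}

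Il n'y a pas v\'eritablement d'obstacle ici : toute la difficult\'e a \'et\'e absorb\'ee dans le Corollaire \ref{proj puissance distance} (lui-m\^eme cons\'equence du Corollaire \ref{proj puissance} appliqu\'e \`a $f=d_C$), qui exige que $d_C^2$ -- et donc implicitement $d_C$ sur $\clev{0}d_C$ -- soit Fr\'echet-diff\'erentiable sur $\clev{0}d_C$. Cette hypoth\`ese est satisfaite pour tout convexe ferm\'e non vide gr\^ace \`a \dueto{Proposition 18.22(iii)}{Livre1}, d\'ej\`a invoqu\'ee dans la preuve de l'Exemple \ref{proj distance}, donc a fortiori pour $C=\{0\}$ o\`u $\norm{\cdot}$ est m\^eme analytique en dehors de l'origine.

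On peut \'egalement remarquer, \`a titre de v\'erification, la coh\'erence du r\'esultat avec les cas particuliers d\'ej\`a trait\'es : pour $\alpha=1$ on retrouve $G_{\norm{\cdot}}\equiv0$ (Exemple \ref{proj distance}), et pour $\alpha=1/2$ on retrouve $G_{\norm{\cdot}^2}=\Id/2$ (Exemple \ref{proj distance carree}).
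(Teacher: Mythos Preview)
Your proof is correct and follows exactly the same route as the paper: the paper's proof consists of the single sentence ``Cela r\'esulte du Corollaire \ref{proj puissance distance} o\`u l'on a pris $C=\{0\}$.'' Your additional remarks on the verification of the hypotheses and the consistency with the special cases $\alpha=1$ and $\alpha=1/2$ are correct but go beyond what the paper provides.
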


\begin{proof}
Cela r\'esulte du Corollaire \ref{proj puissance distance} o\`u 
l'on a pris $C=\{0\}$.
\end{proof}

\begin{proposition}
\dueto{Proposition 16.5}{Livre1}
\label{sousdiff composition}
Soient $f\in\Gamma_0(\HH)$, $\GG$ un espace de Hilbert sur $\RR$ et 
$L\in\mathcal{B}(\GG,\HH)$. Alors $L^{\ast}\circ\partial f
\circ L\subset\partial(f\circ L)$.
\end{proposition}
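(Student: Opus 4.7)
The plan is a direct verification from the definition of the Fenchel subdifferential, using the adjoint relation that defines $L^\ast$. I would take an arbitrary $y \in \GG$ and an arbitrary $v \in (L^\ast \circ \partial f \circ L)(y)$, which by definition means there exists $u \in \partial f(Ly)$ such that $v = L^\ast u$. The goal is then to show $v \in \partial(f \circ L)(y)$, i.e.
\begin{equation}
(\forall z \in \GG) \quad (f\circ L)(z) \geq (f\circ L)(y) + \scal{v}{z-y}.
\end{equation}

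The first step is to write the subgradient inequality satisfied by $u$ at the point $Ly$: for every $x \in \HH$,
\begin{equation}
f(x) \geq f(Ly) + \scal{u}{x - Ly}.
\end{equation}
The second step is to specialize this inequality to the points of $\HH$ lying in the range of $L$: for any $z \in \GG$, taking $x = Lz$ yields
\begin{equation}
f(Lz) \geq f(Ly) + \scal{u}{Lz - Ly} = f(Ly) + \scal{u}{L(z-y)}.
\end{equation}
The third and final step is to transfer $L$ to the other side of the inner product via the defining property $\scal{u}{Lw} = \scal{L^\ast u}{w}$ of the adjoint (valid for every $w \in \GG$), which turns the right-hand side into $f(Ly) + \scal{L^\ast u}{z-y} = (f\circ L)(y) + \scal{v}{z-y}$, giving exactly the desired subgradient inequality.

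There is essentially no obstacle: the argument uses only the definitions of the subdifferential and of the adjoint, and it requires no convexity or lower semicontinuity hypothesis on $f$ beyond the existence of the subgradient at $Ly$ (neither properness, convexity nor lower semicontinuity of $f\circ L$ is needed for the inclusion). The reverse inclusion is where subtleties appear and would typically require a qualification condition, but that is not what is claimed here.
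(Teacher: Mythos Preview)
Your argument is correct: the direct verification from the definition of the subdifferential, specializing the subgradient inequality at $Ly$ to points of the form $x=Lz$ and then using the adjoint identity $\scal{u}{L(z-y)}=\scal{L^\ast u}{z-y}$, is exactly the standard proof of this inclusion. The paper itself gives no proof and simply cites \cite[Proposition~16.5]{Livre1}, whose proof is precisely the computation you wrote; your closing remark that neither convexity nor lower semicontinuity of $f$ is actually needed for this direction is also accurate.
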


\begin{proposition}
\label{proj composition droite}
Soient $f\in\Gamma_0(\HH)$ telle que $\lev{0}f\neq\emp$, $\GG$ un 
espace de Hilbert sur $\RR$, $L\in\mathcal{B}(\GG,\HH)$ et $U$ une 
s\'election de $\partial f$. On suppose $L^{-1}(\lev{0}f)\neq\emp$.
\begin{enumerate}
\item
$G_{f\circ L}^{L^{\ast}UL}$ est bien d\'efini.
\item
$\dom G_{f\circ L}^{L^{\ast}UL}=L^{-1}\left(
\dom G_f^U\right)$
\item
$\left(\forall y\in\dom G^{L^{\ast}UL}
_{f\circ L}\right)\quad\left[L^{\ast}\circ G_f^U\circ L
\right]y=L^{\ast}Ly-\Frac{\norm{L^{\ast}ULy}^2}{\norm{
ULy}^2}\left(y-G_{f\circ L}^{L^{\ast}UL}y\right)$
\item
S'il existe $\alpha\in\RR$ tel que $L^\ast L=LL^\ast=\alpha\Id$ 
alors $\alpha G_{f\circ L}^{L^\ast UL}=L^\ast\circ G_f^U\circ L$.
\item
Si $L$ est inversible avec $L^{-1}=L^{\ast}$ alors $G^{L^{\ast}UL}
_{f\circ L}=L^{\ast}\circ G_f^U\circ L$.
\end{enumerate}
\end{proposition}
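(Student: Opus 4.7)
Observons d'abord que $f\circ L\in\Gamma_0(\GG)$ et que $\lev{0}(f\circ L)=L^{-1}(\lev{0}f)$ est non vide par hypoth\`ese, ce qui assure l'existence de $G_{f\circ L}$. Le point (i) s'obtiendrait par application directe de la Proposition \ref{sousdiff composition}: pour tout $y\in L^{-1}(\dom U)$, $L^{\ast}ULy\in L^{\ast}\partial f(Ly)\subset\partial(f\circ L)(y)$, donc $L^{\ast}UL$ est bien une s\'election de $\partial(f\circ L)$ d\'efinie sur $L^{-1}(\dom U)$. Le point (ii) r\'esulte alors du calcul ensembliste
\begin{equation*}
\dom G_{f\circ L}^{L^{\ast}UL}=\lev{0}(f\circ L)\cup\dom(L^{\ast}UL)=L^{-1}(\lev{0}f)\cup L^{-1}(\dom U)=L^{-1}(\dom G_f^U).
\end{equation*}

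La partie principale est (iii), que je traiterais par disjonction sur le signe de $f(Ly)$. Si $f(Ly)\leq 0$, alors $Ly\in\lev{0}f$ donc $G_f^ULy=Ly$, et $y\in\lev{0}(f\circ L)$ donc $G_{f\circ L}^{L^{\ast}UL}y=y$: les deux membres de l'identit\'e se r\'eduisent \`a $L^{\ast}Ly$. Si $f(Ly)>0$, le Lemme \ref{sous-gradient non nul} appliqu\'e \`a $f$ et \`a $f\circ L$ garantit respectivement que $ULy\neq0$ et $L^{\ast}ULy\neq0$, ce qui est indispensable pour que les op\'erateurs $G_f^U$ et $G_{f\circ L}^{L^{\ast}UL}$ soient bien d\'efinis en $Ly$ et $y$. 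On a alors
\begin{equation*}
L^{\ast}G_f^ULy=L^{\ast}Ly-\Frac{f(Ly)}{\norm{ULy}^2}L^{\ast}ULy,\qquad y-G_{f\circ L}^{L^{\ast}UL}y=\Frac{f(Ly)}{\norm{L^{\ast}ULy}^2}L^{\ast}ULy,
\end{equation*}
et un calcul direct donne
\begin{equation*}
\Frac{\norm{L^{\ast}ULy}^2}{\norm{ULy}^2}\bigl(y-G_{f\circ L}^{L^{\ast}UL}y\bigr)=\Frac{f(Ly)}{\norm{ULy}^2}L^{\ast}ULy=L^{\ast}Ly-L^{\ast}G_f^ULy,
\end{equation*}
ce qui est pr\'ecis\'ement (iii).

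Pour (iv), on sp\'ecialise (iii) \`a l'hypoth\`ese $L^{\ast}L=LL^{\ast}=\alpha\Id$: on a alors $L^{\ast}Ly=\alpha y$ et, par adjonction, $\norm{L^{\ast}ULy}^2=\scal{LL^{\ast}ULy}{ULy}=\alpha\norm{ULy}^2$, de sorte que l'identit\'e (iii) se r\'eduit \`a $L^{\ast}G_f^ULy=\alpha G_{f\circ L}^{L^{\ast}UL}y$. Le point (v) est enfin le cas particulier $\alpha=1$, puisque $L^{-1}=L^{\ast}$ implique $L^{\ast}L=LL^{\ast}=\Id$. L'obstacle principal se trouve dans (iii), notamment dans la justification rigoureuse de $L^{\ast}ULy\neq0$ lorsque $f(Ly)>0$, qui garantit que $G_{f\circ L}^{L^{\ast}UL}y$ est bien d\'efini; la v\'erification de l'identit\'e elle-m\^eme est ensuite un calcul alg\'ebrique \'el\'ementaire.
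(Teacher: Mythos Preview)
Your proof is correct and follows essentially the same approach as the paper: both invoke Proposition~\ref{sousdiff composition} for (i), perform the same set-theoretic calculation for (ii), establish (iii) by case analysis on the sign of $f(Ly)$ with a direct algebraic verification, and obtain (iv)--(v) by specialization using $\norm{L^{\ast}x}^2=\alpha\norm{x}^2$. Your justification of $ULy\neq0$ and $L^{\ast}ULy\neq0$ via Lemma~\ref{sous-gradient non nul} is in fact slightly more explicit than the paper's.
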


\begin{proof}
\begin{enumerate}
\item
D'une part $\lev{0}(f\circ L)=L^{-1}(\lev{0}f)\neq\emp$, 
d'autre part $(\forall y\in\dom U)~L^{\ast}ULy\in\partial(f\circ L)
(y)$ d'apr\`es la Proposition \ref{sousdiff composition}.
\item
On a $\dom G_{f\circ L}^{L^{\ast}UL}
=L^{-1}(\lev{0}f)\cup L^{-1}(\dom U)
=L^{-1}(\dom G_f^U)$.
\item
Soit $y\in\dom G^{L^{\ast}UL}_{f\circ L}$. 
Si $f(Ly)\leq0$ alors $[L^{\ast}\circ G_f^U\circ L]y
=L^{\ast}Ly$. Si $f(Ly)>0$ alors
\begin{align}
[L^{\ast}\circ G_f^U\circ L]y
&=L^{\ast}Ly-\Frac{\norm{L^{\ast}ULy}^2}{\norm{ULy}^2}
\Frac{(f\circ L)(y)}{\norm{L^{\ast}ULy}^2}L^{\ast}ULy\nonumber\\
&=L^{\ast}Ly
-\Frac{\norm{L^{\ast}ULy}^2}{\norm{ULy}^2}\left(y-G_{f\circ L}
^{L^{\ast}UL}y\right).
\end{align}
NB: Comme $L^{\ast}ULy\neq0$, on a aussi $ULy\neq 0$.
\item
On a $(\forall x\in\HH)~\norm{L^\ast x}^2=
\scal{x}{LL^\ast x}=\alpha\norm{x}^2$, d'o\`u le r\'esultat.
\item
C'est le cas $\alpha=1$ du point pr\'ec\'edent.
\end{enumerate}
\end{proof}

\begin{notation}[Op\'erateur proximal]
\dueto{Definition 12.23}{Livre1}
Soient $f\in\Gamma_0(\HH)$. Pour tout $x$ dans $\HH$ on note 
$\prox_fx$ l'unique point de $\HH$ tel que
\begin{equation}
\underset{y\in\HH}{\min}\left(f(y)+\Frac{1}{2}\norm{x-y}^2\right)
=f(\prox_fx)+\Frac{1}{2}\norm{x-\prox_fx}^2
\end{equation} 
(voir \dueto{Proposition 12.15}{Livre1}).
\end{notation}

\begin{remark}
\dueto{Proposition 23.32}{Livre1}
S'il existe $\alpha\in\RR_{++}$ tel que $L^\ast L=LL^\ast=\alpha\Id$ 
alors
\begin{align}
L^\ast\circ\prox_{\alpha f}\circ L&=\alpha\prox_{f\circ L}\\
L^\ast\circ G_{\alpha f}^{\alpha U}\circ L&
=\alpha G_{f\circ L}^{L^\ast UL}
\end{align}
d'apr\`es la Proposition \ref{proj produit externe}.
\end{remark}

\begin{proposition}
Soient $\GG$ un espace de Hilbert sur $\RR$ et $L\in\BL(\HH,\GG)$ 
inversible avec $L^{-1}=L^\ast$. Posons
\begin{equation}
f:\HH\rightarrow\RR:x\mapsto\Frac{\norm{Lx}^2}{2}.
\end{equation}
Alors $G_f=\Frac{\Id}{2}=\prox_f$.
\end{proposition}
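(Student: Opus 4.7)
The plan is to exploit the hypothesis $L^{-1}=L^\ast$ to reduce $f$ to a function already handled in the paper, after which the two equalities become immediate. First I would observe that $L^{-1}=L^\ast$ forces $L^\ast L=LL^\ast=\Id$, so $L$ is a unitary isomorphism; consequently $\norm{Lx}^2=\scal{L^\ast Lx}{x}=\norm{x}^2$ for every $x\in\HH$, and therefore $f=\frac{1}{2}\norm{\cdot}^2$ as a function on $\HH$.

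With that reduction in hand, the equalities $G_f=\Id/2$ and $\prox_f=\Id/2$ can be established separately. For $G_f$, I would invoke Exemple \ref{proj distance carree} with $C=\{0\}$ to get $G_{\norm{\cdot}^2}=\Id/2$, then apply Proposition \ref{proj produit externe} with $\lambda=1/2$ to conclude $G_f=G_{\norm{\cdot}^2/2}=\Id/2$. A direct verification via Remarque \ref{projections}(v) is equally short: since $\nabla f(x)=L^\ast Lx=x$ one has $\lev{0}f=\{0\}$ and, for $x\neq 0$, $G_f x=x-\Frac{\norm{x}^2/2}{\norm{x}^2}\,x=x/2$. For $\prox_f$, the r\`egle de Fermat applied to $y\mapsto f(y)+\tfrac{1}{2}\norm{x-y}^2$ gives $x-\prox_f x=\nabla f(\prox_f x)=\prox_f x$, hence $\prox_f x=x/2$.

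There is essentially no obstacle: the entire content of the statement is the initial unitarity observation that collapses $f$ to $\tfrac{1}{2}\norm{\cdot}^2$, after which each equality is a one-line computation. A more conceptual alternative would derive both from Proposition \ref{proj composition droite}(v) in the case $\alpha=1$, combined with the analogous composition identity $L^\ast\circ\prox_g\circ L=\prox_{g\circ L}$ recalled in the remark following the definition of $\prox$, thereby displaying the statement as a particular instance of the unitary-conjugation principle $T\mapsto L^\ast\circ T\circ L$ applied to $T=\Id/2$.
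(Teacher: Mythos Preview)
Your argument is correct, but it takes a different route from the paper. The paper keeps the decomposition $f=\tfrac{1}{2}\norm{\cdot}^2\circ L$ and chains Corollaire~\ref{proj puissance norme}, Proposition~\ref{proj produit externe}, and Proposition~\ref{proj composition droite}(v) to obtain $G_f=L^\ast\circ\tfrac{\Id}{2}\circ L=\tfrac{1}{2}L^\ast L=\tfrac{\Id}{2}$; for $\prox_f$ it simply cites \cite[Example~17.7]{Livre1}. You instead collapse everything at the outset by reading off $\norm{Lx}=\norm{x}$ from $L^\ast L=\Id$, which reduces $f$ to $\tfrac{1}{2}\norm{\cdot}^2$ and makes both equalities one-line computations (you even recompute $\prox_f$ from scratch). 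Your approach is shorter and entirely self-contained; the paper's approach has the virtue of exhibiting the result as an illustration of the general composition machinery developed just above it --- a route you acknowledge in your closing paragraph. Either is perfectly acceptable.
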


\begin{proof}
En appliquant le Corollaire \ref{proj puissance norme}, la 
Proposition \ref{proj produit externe} et la Proposition 
\ref{proj composition droite} on obtient $G_f=\Frac{\Id}{2}$. 
L'\'egalit\'e $\prox_f=\Frac{\Id}{2}$ r\'esulte de 
\dueto{Example 17.7}{Livre1} en prenant $r=0$ et $\gamma=1$.
\end{proof}

\subsubsection{Application vers l'acc\'el\'eration de l'algorithme 
de projection sous-diff\'erentielle}

\begin{lemma}
\label{prof composition gauche}
Soient $f\in\Gamma_0(\HH)$ \`a valeurs r\'eelles telle que $\lev{0}f
\neq\emp$ et $\phi:\RR\rightarrow\RR$ strictement croissante sur 
$f(\HH)$ avec $\phi(0)=0$. On suppose que $f$ et $\phi$ sont 
Fr\'echet-diff\'erentiables sur $\clev{0}f$. Alors
\begin{equation}
\left(\forall x\in\clev{0}f\right)\quad\norm{x-G_fx}-
\norm{x-G_{\phi\circ f}x}=\Frac{f(x)}{\norm{\nabla f(x)}}
\left(1-\Frac{\phi(f(x))}{f(x)\phi'(f(x)}\right).
\end{equation}
\end{lemma}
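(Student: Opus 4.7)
The plan is to reduce everything to the formula already established in Proposition \ref{proj composition gauche}, then take norms and rearrange.

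First, fix $x \in \clev{0}f$. Proposition \ref{proj composition gauche} gives the identity
\begin{equation}
G_{\phi \circ f}x - x = \Frac{\phi(f(x))}{f(x)\phi'(f(x))}\bigl(G_f x - x\bigr),
\end{equation}
so in particular $x - G_{\phi \circ f}x$ and $x - G_f x$ are scalar multiples of one another. I would next check the sign of the scalar factor $\phi(f(x))/(f(x)\phi'(f(x)))$: since $f(x) > 0$ and $\phi$ is strictly increasing on $f(\HH)$ with $\phi(0) = 0$, one has $\phi(f(x)) > 0$; and $\phi'(f(x)) > 0$ because $\phi'(f(x))$ appears in the denominator of the original formula (so we are implicitly in the regime where it is nonzero, and being the derivative of an increasing function it must be positive). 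This handles the one subtle point and lets me pass absolute values through when taking norms.

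Taking norms in the displayed identity therefore yields
\begin{equation}
\norm{x - G_{\phi \circ f}x} = \Frac{\phi(f(x))}{f(x)\phi'(f(x))}\, \norm{x - G_f x}.
\end{equation}
Separately, from the explicit formula for $G_f$ in the differentiable case (Remarque \ref{projections}) one reads off
\begin{equation}
\norm{x - G_f x} = \Frac{f(x)}{\norm{\nabla f(x)}}.
\end{equation}

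Finally, I combine the two: subtracting the first displayed equation from $\norm{x - G_f x}$ and factoring,
\begin{equation}
\norm{x - G_f x} - \norm{x - G_{\phi \circ f}x} = \norm{x - G_f x}\left(1 - \Frac{\phi(f(x))}{f(x)\phi'(f(x))}\right),
\end{equation}
and substituting the expression for $\norm{x - G_f x}$ gives exactly the claimed formula. The only real work is the sign check on the scaling factor; everything else is a direct computation from results already proved.
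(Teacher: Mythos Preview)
Your proof is correct and follows exactly the approach of the paper, which simply states that it suffices to apply Proposition~\ref{proj composition gauche}. You have filled in the details the paper omits, including the sign check on the scaling factor and the explicit computation of $\norm{x-G_fx}$.
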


\begin{proof}
Il suffit d'appliquer la Proposition \ref{proj composition gauche}.
\end{proof}

On observe que $\lev{0}(\phi\circ f)=\lev{0}f$. Les op\'erateurs 
$G_{\phi\circ f}$ et $G_f$ sont donc tous deux des approximations 
sous-diff\'erentielles 
du projecteur m\'etrique sur $\lev{0}f$. Le 
r\'esultat suivant affirme que le projecteur de la composition \`a 
gauche par $\phi=|\cdot|^\alpha$ envoie le point plus loin, mais 
l'\'eloigne de sa projection m\'etrique.

\begin{proposition}
Soient $f\in\Gamma_0(\HH)$ \`a valeurs r\'eelles telle que $\lev{0}f
\neq\emp$ et $\alpha\in]0,1]$. On suppose $f\geq0$ et $f$ 
Fr\'echet-diff\'erentiable sur $\clev{0}f$. Alors
\begin{enumerate}
\item
$\left(\forall x\in\clev{0}f\right)\quad\norm{x-G_fx}-
\norm{x-G_{f^\alpha}x}=\Frac{f(x)}{\norm{\nabla f(x)}}
\left(1-\Frac{1}{\alpha}\right)\leq0$,
\item
$\left(\forall x\in\clev{0}f\right)\quad\norm{G_fx-P_{\lev{0}f}x}
\leq\norm{G_{f^\alpha}x-P_{\lev{0}f}x}$.
\end{enumerate}
\end{proposition}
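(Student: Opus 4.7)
Mon plan consiste à déduire les deux assertions du Corollaire \ref{proj puissance}. En posant $\beta = 1/\alpha \geq 1$, celui-ci donne
\[
G_{f^\alpha}x = x + \beta(G_f x - x),
\]
de sorte que $G_{f^\alpha}x$ se trouve sur la demi-droite issue de $x$ passant par $G_f x$, au-delà de ce dernier.

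Pour (i), j'appliquerais le Lemme \ref{prof composition gauche} avec $\phi : t \mapsto |t|^\alpha$ ; pour $t > 0$, on a $\phi(t)/(t\phi'(t)) = 1/\alpha$, ce qui donne directement l'égalité annoncée. Le signe découle de $f(x) > 0$ sur $\clev{0}f$ et de $1 - 1/\alpha \leq 0$ pour $\alpha \in ]0,1]$.

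Pour (ii), je poserais $p = G_f x$, $q = G_{f^\alpha}x$, $z = P_{\lev{0}f}x$ et $u = \nabla f(x)$. Le Corollaire donne $q - p = (\beta - 1)(p - x)$, qui est un multiple positif de $-u$ puisque $p - x = -(f(x)/\norm{u}^2)u$. Je développerais alors
\[
\norm{q - z}^2 = \norm{q - p}^2 + 2\scal{q - p}{p - z} + \norm{p - z}^2
\]
en m'appuyant sur deux faits géométriques clés : (a) $p = P_{H_u}x$ d'après la Remarque \ref{projections}\ref{lien proj metrique}, donc $p - x$ est une normale sortante de $H_u$ au point $p \in \partial H_u$ ; (b) $z \in \lev{0}f \subset H_u$, d'où $\scal{z - p}{u} \leq 0$. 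Ces faits combinés donnent $\scal{q - p}{p - z} \leq 0$.

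La principale difficulté réside précisément ici. Le terme croisé $2\scal{q-p}{p-z}$ est négatif tandis que $\norm{q-p}^2$ est positif ; pour conclure $\norm{q - z}^2 \geq \norm{p - z}^2$, il faut montrer que le terme quadratique l'emporte sur le terme croisé. En décomposant $p-z$ selon $u$ et son orthogonal, et en observant que $q - p$ est parallèle à $u$, la comparaison se ramène à une inégalité unidimensionnelle sur les composantes parallèles. J'anticipe que cette étape finale est la plus délicate et semble exiger une utilisation fine de la propriété spécifique de $z = P_{\lev{0}f}x$ en tant que projection métrique sur $\lev{0}f$, au-delà de sa seule appartenance à $H_u$.
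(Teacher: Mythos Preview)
Pour (i), ton approche est correcte et identique à celle du papier.

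Pour (ii), ta décomposition est juste et ton diagnostic du point délicat l'est aussi --- à ceci près que la difficulté n'est pas seulement délicate, elle est insurmontable : l'assertion (ii) est \emph{fausse} telle qu'énoncée. Prends $\HH=\RR$, $f=|\cdot|^2$, $\alpha=1/2$ et $x=1$. Alors $G_f(1)=1/2$, $f^{1/2}=|\cdot|$ donne $G_{f^{1/2}}(1)=0$, et $P_{\lev{0}f}(1)=0$ ; on obtient $|G_f(1)-0|=1/2>0=|G_{f^{1/2}}(1)-0|$. Géométriquement, avec tes notations, $q$ dépasse $p$ dans la direction $-u$, mais si le dépassement est modéré $q$ peut très bien se rapprocher de $z$ ; rien ne garantit que $\norm{q-p}^2$ compense $2|\scal{q-p}{p-z}|$, et aucune propriété supplémentaire de $z=P_{\lev{0}f}x$ ne permet de sauver l'énoncé.

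La preuve du papier échappe à cet obstacle parce qu'elle substitue, dès sa deuxième ligne, à $G_{f^\alpha}x$ l'expression $x-\alpha\,\dfrac{f(x)}{\norm{\nabla f(x)}^2}\nabla f(x)$, c'est-à-dire avec coefficient $\alpha$ au lieu de $1/\alpha$. D'après le Corollaire~\ref{proj puissance}, c'est $G_{f^{1/\alpha}}x$ et non $G_{f^\alpha}x$. Avec ce coefficient $\alpha\leq1$, le point considéré est entre $x$ et $G_fx$, et l'inégalité du sous-gradient $\scal{z-x}{\nabla f(x)}\leq f(z)-f(x)\leq-f(x)$ permet de conclure. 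Le calcul du papier est donc internement cohérent mais établit en réalité $\norm{G_fx-P_{\lev{0}f}x}\leq\norm{G_{f^{1/\alpha}}x-P_{\lev{0}f}x}$ ; c'est (ii) avec $f^\alpha$ remplacé par $f^{1/\alpha}$, ce qui ne s'accorde pas avec (i), lequel est, lui, correctement énoncé et démontré pour $f^\alpha$ avec $\alpha\in]0,1]$.
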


\begin{proof}
\begin{enumerate}
\item
Il suffit d'appliquer le Lemme \ref{prof composition gauche} 
\`a $\phi:\RR\rightarrow\RR:t\mapsto|t|^\alpha$.
\item
Soit $x\in\clev{0}f$. Alors
\begin{align}
&\norm{G_fx-P_{\lev{0}f}x}^2-\norm{G_{f^\alpha}x-P_{\lev{0}f}x}^2
\nonumber\\
&=\norm{(x-P_{\lev{0}f}x)-\Frac{f(x)}{\norm{\nabla f(x)}^2}
\nabla f(x)}^2-\norm{(x-P_{\lev{0}f}x)-\alpha\Frac{f(x)}
{\norm{\nabla f(x)}^2}\nabla f(x)}^2\nonumber\\
&=-2(1-\alpha)\Frac{f(x)}{\norm{\nabla f(x)}^2}\scal{x-P_{\lev{0}f}x}
{\nabla f(x)}+(1-\alpha^2)\Frac{f(x)^2}{\norm{\nabla f(x)}^2}
\nonumber\\
&=(1-\alpha)\Frac{f(x)}{\norm{\nabla f(x)}^2}\left(2
\scal{P_{\lev{0}f}x-x}{\nabla f(x)}+(1+\alpha)f(x)\right)\nonumber\\
&\leq(1-\alpha)\Frac{f(x)}{\norm{\nabla f(x)}^2}
\left(2f(P_{\lev{0}f})+(\alpha-1)f(x)\right)\nonumber\\
&\leq-(1-\alpha)^2\Frac{f(x)^2}{\norm{\nabla f(x)}^2}\nonumber\\
&\leq0.
\end{align}
\end{enumerate}
\end{proof}

\subsection{Propri\'et\'es li\'ees \`a l'addition}

\begin{notation}
\'Etant donn\'ees deux fonctions $f$ et $g$ de $\Gamma_0(\HH)$, on 
notera  $\partial f\cap\partial g$ l'op\'erateur multivoque 
qui \`a tout point $x$ de $\HH$ associe l'ensemble $\partial f(x)
\cap\partial g(x)$.
\end{notation}

\begin{proposition}
\label{proj moyenne}
Soient $f$ et $g$ deux fonctions de $\Gamma_0(\HH)$ telles que 
$\lev{0}f\neq\emp$ et $\lev{0}g\neq\emp$,  et $\alpha\in]0,1[$. 
On note $h_{\alpha}=\alpha f+(1-\alpha)g$. On suppose 
$\lev{0}f\cap\lev{0}g\neq\emp$ et 
$\dom\left(\partial f\cap\partial g\right)\neq\emp$. Soit $U$ une 
s\'election de $\partial f\cap\partial g$.
\begin{enumerate}
\item
$G^U_{h_{\alpha}}$ est bien d\'efini.
\item
$\left(\dom G^U_f\right)\cap\left(\dom G_g^U\right)\subset\dom 
G_{h_{\alpha}}^U$
\item
Soit $x\in\left(\dom G^U_f\right)\cap\left(
\dom G^U_g\right)$. Alors
\begin{equation}
G_{h_{\alpha}}^U x=\alpha G^U_fx+(1-\alpha)
G^U_gx\Leftrightarrow f(x)g(x)\geq0.
\end{equation}
\end{enumerate}
\end{proposition}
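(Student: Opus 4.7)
The plan is to verify (i) and (ii) by routine bookkeeping and then settle (iii) by a case analysis on the signs of $f(x)$ and $g(x)$, the main technical point being the behaviour of $h_\alpha(x)$ in the mixed-sign situation.

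For (i) I would first argue that $\lev{0}h_\alpha\neq\emp$: any $y\in\lev{0}f\cap\lev{0}g$ satisfies $h_\alpha(y)=\alpha f(y)+(1-\alpha)g(y)\leq 0$. Next I would show that $U$ is a selection of $\partial h_\alpha$ by the standard convex-analysis computation: if $u=Ux\in\partial f(x)\cap\partial g(x)$, multiplying the two subgradient inequalities by $\alpha$ and $1-\alpha$ and summing yields $u=\alpha u+(1-\alpha)u\in\partial h_\alpha(x)$. For (ii), the identity $\dom G_\varphi^U=\lev{0}\varphi\cup\dom U$ applied to $\varphi\in\{f,g\}$ gives $\dom G_f^U\cap\dom G_g^U=(\lev{0}f\cap\lev{0}g)\cup\dom U\subset\lev{0}h_\alpha\cup\dom U=\dom G_{h_\alpha}^U$.

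For (iii), fix $x\in\dom G_f^U\cap\dom G_g^U$ and split into cases according to the signs of $f(x)$ and $g(x)$. In the two same-sign cases the equivalence is immediate: when $f(x)\leq 0$ and $g(x)\leq 0$ (so $f(x)g(x)\geq 0$), all three of $G_f^Ux$, $G_g^Ux$ and $G_{h_\alpha}^Ux$ equal $x$; when $f(x)>0$ and $g(x)>0$ (so $f(x)g(x)>0$), then $x\in\dom U$ with $Ux\neq 0$ by Lemme \ref{sous-gradient non nul}, the two explicit formulas share the denominator $\norm{Ux}^2$, and their convex combination telescopes to $x-\Frac{h_\alpha(x)}{\norm{Ux}^2}Ux$, which coincides with $G_{h_\alpha}^Ux$ since $h_\alpha(x)>0$.

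The main obstacle is the mixed-sign case, say $f(x)>0$ and $g(x)\leq 0$ (the symmetric case $f(x)\leq 0<g(x)$ is analogous). Here $G_g^Ux=x$, so $\alpha G_f^Ux+(1-\alpha)G_g^Ux=x-\Frac{\alpha f(x)}{\norm{Ux}^2}Ux$, and one must distinguish the two sub-cases $h_\alpha(x)\leq 0$ and $h_\alpha(x)>0$: in the first, $G_{h_\alpha}^Ux=x$ and equality forces $\alpha f(x)=0$, contradicting $f(x)>0$; in the second, $G_{h_\alpha}^Ux=x-\Frac{h_\alpha(x)}{\norm{Ux}^2}Ux$ and equality forces $\alpha f(x)=h_\alpha(x)$, i.e., $(1-\alpha)g(x)=0$, hence $g(x)=0$. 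Thus equality in this mixed-sign case holds if and only if $g(x)=0$, i.e., $f(x)g(x)=0\geq 0$, which combined with the preceding cases yields the stated equivalence.
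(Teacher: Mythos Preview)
Your proposal is correct and follows essentially the same route as the paper's proof: (i) and (ii) are handled identically, and (iii) is settled by the same case analysis on the signs of $f(x)$ and $g(x)$. The only cosmetic difference is that the paper first rewrites each projector uniformly as $x-\dfrac{[\,\cdot\,]^+}{\norm{Ux}^2}Ux$ so that the difference $G_{h_\alpha}^Ux-\alpha G_f^Ux-(1-\alpha)G_g^Ux$ becomes a single scalar multiple of $Ux$, and then tabulates that scalar in the five sign sub-cases; your direct case-by-case computation reaches the same conclusions.
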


\begin{proof}
\begin{enumerate}
\item
D'une part $\emp\neq(\lev{0}f)\cap(\lev{0}g)\subset
\lev{0}h_{\alpha}$, d'autre part $(\forall x\in\dom U)~Ux=
\alpha Ux+(1-\alpha)Ux\in\alpha\partial f(x)+(1-\alpha)\partial g(x)
\subset\partial h_{\alpha}(x)$.
\item
On a $\left(\dom G^U_f\right)\cap\left(\dom G^U_g\right)
=\left(\lev{0}f\cap\lev{0}g\right)\cup\dom U\subset\lev{0}h_{\alpha}
\cup\dom U=\dom G^U_{h_{\alpha}}$.
\item Soit $x\in\left(\dom G^U_f\right)\cap\left(
\dom G_g^U\right)$. Si $x\in\lev{0}f\cap\lev{0}g$ alors 
$G_{h_{\alpha}}^Ux=x=\alpha x+(1-\alpha)x=\alpha G^U_fx+(1-\alpha)
G^U_gx$. Supposons $x\in\complement(\lev{0}f\cap\lev{0}g)\cap
\dom U$. On a $G_f^Ux=x-\Frac{\left[f(x)\right]^+}{\norm{Ux}^2}Ux$, 
$G^U_gx=x-\Frac{\left[g(x)\right]^+}{\norm{Ux}^2}Ux$ et 
$G^U_{h_{\alpha}}x=x-\Frac{\left[\alpha f(x)+(1-\alpha)g(x)
\right]^+}{\norm{Ux}^2}Ux$. D'o\`u
\begin{equation}
G_{h_{\alpha}}^Ux-\alpha G^U_fx-(1-\alpha)G^U_gx
=\left(\alpha\left[f(x)\right]^++(1-\alpha)\left[g(x)\right]^+-
\left[\alpha f(x)+(1-\alpha)g(x)\right]^+\right)\Frac{Ux}{\norm{Ux}^2}
\end{equation}
Et ainsi
\begin{equation}
\label{expr projecteur moyenne}
G_{h_{\alpha}}^Ux-\alpha G^U_fx-(1-\alpha)G^U_gx=
\begin{cases}
(1-\alpha)\Frac{g(x)}{\norm{Ux}^2}Ux &
\text{si }f(x)\leq0,g(x)>0\text{ et }h_{\alpha}(x)\leq0\\
-\alpha\Frac{f(x)}{\norm{Ux}^2}Ux &
\text{si }f(x)\leq0,g(x)>0\text{ et }h_{\alpha}(x)>0\\
\alpha\Frac{f(x)}{\norm{Ux}^2}Ux & \text{si
}f(x)>0,g(x)\leq0\text{ et }h_{\alpha}(x)\leq 0\\
-(1-\alpha)\Frac{g(x)}{\norm{Ux}^2}Ux &
\text{si }f(x)>0,g(x)\leq0\text{ et }h_{\alpha}(x)>0\\
0 & \text{si }f(x)>0\text{ et }g(x)>0.
\end{cases}
\end{equation}
\end{enumerate}
\end{proof}

\begin{corollary}
\label{proj somme}
Soient $f$ et $g$ deux fonctions de $\Gamma_0(\HH)$ telles que 
$\lev{0}f\neq\emp$ et $\lev{0}g\neq\emp$. On suppose 
$\lev{0}f\cap\lev{0}g\neq\emp$ et 
$\dom\left(\partial f\cap\partial g\right)\neq\emp$. Soit $U$ une 
s\'election de $\partial f\cap\partial g$.
\begin{enumerate}
\item
$G_{\frac{f+g}{2}}^U$ et 
$G^{2U}_{f+g}$ sont bien d\'efinis.
\item
$\left(\dom G^U_f\right)\cap\left(\dom G_g^U
\right)\subset\dom G_{\frac{f+g}{2}}^U=\dom G^{2U}_{f+g}$
\item
Soit $x\in\left(\dom G^U_f\right)\cap\left(\dom
G_g^U\right)$. Alors
\begin{equation}
G^{2U}_{f+g}x=G_{\frac{f+g}{2}}^Ux=
\begin{cases}
\Frac{1}{2}G^U_fx+\Frac{1}{2}G^U_gx & \text{si }f(x)g(x)\geq0\\
\Frac{1}{2}G^U_fx+\Frac{1}{2}G^U_gx+\Frac{\min\left(|f(x)|,|g(x)|
\right)}{2\norm{Ux}^2}Ux & \text{sinon.}
\end{cases}
\end{equation}
\end{enumerate}
\end{corollary}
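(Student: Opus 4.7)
The plan is to read this corollary as the specialization $\alpha = 1/2$ of Proposition \ref{proj moyenne}, combined with a rescaling identity. I would proceed in three short stages.

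First, to handle the equality $G^{2U}_{f+g} = G_{\frac{f+g}{2}}^U$, I observe that $f+g = 2\cdot\frac{f+g}{2}$, and that if $Ux\in\partial f(x)\cap\partial g(x)$ then $2Ux = Ux+Ux\in\partial f(x)+\partial g(x)\subset\partial(f+g)(x)$, so $2U$ is a selection of $\partial(f+g)$. Applying Proposition \ref{proj produit externe} with $\lambda=2$ to the function $\frac{f+g}{2}$ and the selection $U$ gives both the well-definedness of $G^{2U}_{f+g}$ and the identity $G^{2U}_{f+g}=G^U_{\frac{f+g}{2}}$, along with equality of the domains. From that point on it is enough to analyze $G^U_{h_{1/2}}$ where $h_{1/2}=\frac{f+g}{2}$.

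Second, the well-definedness of $G^U_{h_{1/2}}$ and the domain inclusion follow directly from Proposition \ref{proj moyenne} applied with $\alpha=1/2$. Similarly, the case $f(x)g(x)\geq 0$ of the explicit formula is exactly the content of point (iii) of that proposition with $\alpha=1/2$.

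The only real content is recognizing $\frac{\min(|f(x)|,|g(x)|)}{2\norm{Ux}^2}Ux$ in the case $f(x)g(x)<0$. Here I would use the explicit case split \eqref{expr projecteur moyenne} appearing inside the proof of Proposition \ref{proj moyenne} with $\alpha=1/2$. The key observation is that the sign of $h_{1/2}(x)=\frac{f(x)+g(x)}{2}$ selects which of $|f(x)|,|g(x)|$ is smaller: for instance, if $f(x)\leq 0<g(x)$ and $h_{1/2}(x)\leq 0$, then $|f(x)|=-f(x)\geq g(x)=|g(x)|$, so the residual term $\frac{1}{2}\frac{g(x)}{\norm{Ux}^2}Ux$ coincides with $\frac{\min(|f(x)|,|g(x)|)}{2\norm{Ux}^2}Ux$; if instead $h_{1/2}(x)>0$, then $|g(x)|>|f(x)|$ and the residual $-\frac{1}{2}\frac{f(x)}{\norm{Ux}^2}Ux=\frac{|f(x)|}{2\norm{Ux}^2}Ux$ again equals $\frac{\min(|f(x)|,|g(x)|)}{2\norm{Ux}^2}Ux$; the symmetric subcases $f(x)>0\geq g(x)$ work the same way. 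There is no real obstacle here, just a four-case verification; the main thing to keep straight is the bookkeeping between the sign of $h_{1/2}(x)$ and the identification of $\min(|f(x)|,|g(x)|)$.
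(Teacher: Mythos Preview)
Your proposal is correct and follows essentially the same route as the paper: both combine the rescaling identity of Proposition~\ref{proj produit externe} with Proposition~\ref{proj moyenne} at $\alpha=1/2$ and read off the residual term from \eqref{expr projecteur moyenne}. The only cosmetic difference is that the paper first passes to $\tilde f=2f$, $\tilde g=2g$, $\tilde U=2U$ before invoking Proposition~\ref{proj moyenne}, whereas you apply it directly to $f,g$ and rescale only to obtain $G^{2U}_{f+g}=G^{U}_{\frac{f+g}{2}}$; your case-by-case identification of $\min(|f(x)|,|g(x)|)$ is in fact more explicit than the paper's, which simply refers back to \eqref{expr projecteur moyenne}.
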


\begin{proof}
Posons $\tilde{f}=2f$, $\tilde{g}=2g$ et $\tilde{U}=2U$. 
On observe que $\lev{0}\tilde{f}\cap\lev{0}\tilde{g}\neq\emp$ et 
que $\tilde{U}$ est une s\'election de $\partial\tilde{f}\cap
\partial\tilde{g}$. Il suffit alors d'appliquer la 
Proposition \ref{proj moyenne} \`a $\tilde{f}$ et $\tilde{g}$.
\begin{enumerate}
\item
$G_{\frac{f+g}{2}}^U$ et $G^{\tilde{U}}_{\frac{\tilde{f}+
\tilde{g}}{2}} = G^{2U}_{f+g}$ sont bien d\'efinis.
\item
$\dom G^{\tilde{U}}_{\frac{\tilde{f}+\tilde{g}}{2}}=
\dom G^{2U}_{f+g}=\dom G_{\frac{f+g}{2}}^U$
\item
On a $G_{f+g}^{2U}x=G_{\frac{\tilde{f}+\tilde{g}}{2}}
^{\tilde{U}}x=\Frac{1}{2}G_{\tilde{f}}^{\tilde{U}}x+
\Frac{1}{2}G_{\tilde{g}}^{\tilde{U}}x=\Frac{1}{2}
G_{2f}^{2U}x+\Frac{1}{2}G_{2g}^{2U}x=\Frac{1}{2}G_f^Ux
+\Frac{1}{2}G_g^Ux$. De plus $G_{\frac{f+g}{2}}^Ux
=G_{2\frac{f+g}{2}}^{2U}x
=G_{f+g}^{2U}x$. Enfin, on d\'eduit l'expression de 
$G_{\frac{f+g}{2}}^Ux$ de (\ref{expr projecteur moyenne}) pour 
$\alpha=1/2$.
\end{enumerate}
\end{proof}

\begin{example}
Soient $(r,r')\in\RR_{++}^2$ tels que $r<r'$. Notons $B$ la boule 
unit\'e ferm\'ee de $\HH$. Alors l'op\'erateur $U$ d\'efini 
ci-dessous sur $\dom U=rB\cup\complement(r'B)$ est une s\'election 
de $\partial d_{rB}\cap\partial d_{r'B}$ de domaine maximal:
\begin{equation}
U:rB\cup\complement(r'B)\rightarrow\HH:x\mapsto
\begin{cases}
0 & \text{si }\norm{x}\leq r\\
\norm{x}^{-1}x & \text{si }r'\leq\norm{x}.
\end{cases}
\end{equation}
Soient $\alpha\in]0,1[$ et $x\in rB\cup\complement(r'B)$. Alors
\begin{align}
G_{{\displaystyle\alpha d_{rB}+(1-\alpha)d_{r'B}}}^Ux
&=\Frac{\alpha r+(1-\alpha)r'}{\norm{x}}x\\
G_{{\displaystyle d_{rB}+d_{r'B}}}^{2U}x
&=\Frac{r+r'}{2\norm{x}}x.
\end{align}
\end{example}

\begin{proof}
D'apr\`es \dueto{Proposition 18.22}{Livre1} on a
\begin{equation}
(\forall x\in\HH)\quad\partial d_{rB}(x)=
\begin{cases}
\{0\} & \text{si }\norm{x}<r\\
[0,r^{-1}x] & \text{si }\norm{x}=r\\
\left\{\norm{x}^{-1}x\right\} & \text{si }\norm{x}>r.
\end{cases}
\end{equation}
Par cons\'equent
\begin{equation}
(\forall x\in\HH)\quad\left(\partial d_{rB}\cap\partial 
d_{r'B}\right)(x)=
\begin{cases}
\{0\} & \text{si }\norm{x}\leq r\\
\emp & \text{si }r<\norm{x}<r'\\
\{\norm{x}^{-1}x\} & \text{si }r'\leq\norm{x}.
\end{cases}
\end{equation}
L'op\'erateur $U$ est donc bien une s\'election de 
$\partial d_{rB}\cap\partial 
d_{r'B}$ de domaine maximal. Par ailleurs, on a
\begin{equation}
\begin{cases}
\dom G_{{\displaystyle d_{rB}}}^U=rB\cup\dom U=\dom U\\
\dom G_{{\displaystyle d_{r'B}}}^U=r'B\cup\dom U=\HH\\
\end{cases}.
\end{equation}
D'o\`u
\begin{equation}
\left(\dom G_{{\displaystyle d_{rB}}}^U\right)
\cap\left(\dom G_{{\displaystyle d_{r'B}}}^U\right)=\dom U=rB\cup
\complement(r'B).
\end{equation}
Le reste d\'ecoule alors de la Proposition 
\ref{proj moyenne}, du Corollaire \ref{proj somme} et du fait que
\begin{equation}
(\forall x\in\HH)\quad G_{d_{rB}}x=P_{rB}x=
\begin{cases}
x & \text{si }\norm{x}\leq r\\
\Frac{r}{\norm{x}}x & \text{si }\norm{x}>r.
\end{cases}
\end{equation}
\end{proof}

\begin{notation}
\dueto{Definition 12.1}{Livre1}
Soient $f$ et $g$ deux fonctions de $\Gamma_0(\HH)$. Alors
\begin{equation}
f\infconv g:\HH\rightarrow\RXX:x\rightarrow
\underset{y\in\HH}{\inf}\left(f(y)+g(x-y)\right).
\end{equation}
Soit $x\in\HH$. On notera $(f\einfconv g)(x)$ si l'infimum est 
atteint.
\end{notation}

\begin{proposition}
\dueto{Proposition 12.6(ii)}{Livre1}
Soient $f$ et $g$ deux fonctions de $\Gamma_0(\HH)$. Alors 
$\dom f\infconv g=\dom f+\dom g$.
\end{proposition}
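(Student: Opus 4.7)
La preuve proc\`ede par double inclusion, en d\'eroulant les d\'efinitions de $f\infconv g$ et du domaine effectif.

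Pour l'inclusion $\dom f + \dom g \subset \dom (f\infconv g)$, je partirais d'un point $x = a + b$ avec $a\in\dom f$ et $b\in\dom g$, puis j'utiliserais la majoration imm\'ediate
\begin{equation}
(f\infconv g)(x) = \underset{y\in\HH}{\inf}\left(f(y)+g(x-y)\right) \leq f(a) + g(x-a) = f(a) + g(b) < +\infty,
\end{equation}
qui d\'ecoule simplement du fait que l'inf-convolution est un infimum pris sur tout $\HH$. Cela montre que $x\in\dom (f\infconv g)$.

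Pour l'inclusion r\'eciproque, je prendrais $x\in\dom (f\infconv g)$, de sorte que $\underset{y\in\HH}{\inf}\left(f(y)+g(x-y)\right) < +\infty$. Ceci impose l'existence d'un $y\in\HH$ v\'erifiant $f(y)+g(x-y)<+\infty$: dans le cas contraire, la somme vaudrait $+\infty$ pour tout $y\in\HH$, et l'infimum vaudrait $+\infty$. Comme $f$ et $g$ sont \`a valeurs dans $\RX$, cela impose s\'epar\'ement $f(y)<+\infty$ et $g(x-y)<+\infty$, autrement dit $y\in\dom f$ et $x-y\in\dom g$. On conclurait alors en \'ecrivant $x = y + (x-y) \in \dom f + \dom g$.

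Il n'y a pas r\'eellement d'obstacle ici: l'argument est enti\`erement \'el\'ementaire et ne fait appel \`a aucune des hypoth\`eses fines sur $f$ et $g$ (ni convexit\'e, ni semi-continuit\'e inf\'erieure, ni propret\'e ne sont utilis\'ees). Le seul point de vigilance est qu'il n'est pas requis que l'infimum d\'efinissant $(f\infconv g)(x)$ soit atteint: la simple finitude de cet infimum suffit \`a extraire un $y$ admissible, par contraposition imm\'ediate. C'est d'ailleurs pour cela que la notation $f\einfconv g$ est introduite plus haut, afin de signaler pr\'ecis\'ement les points o\`u l'infimum est atteint.
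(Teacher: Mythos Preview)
Your proof is correct. Note, however, that the paper does not provide its own proof of this statement: it is simply cited as Proposition~12.6(ii) from \cite{Livre1}, so there is nothing to compare against. Your double-inclusion argument is the standard elementary one, and as you observe, neither convexity nor lower semicontinuity nor properness is actually needed --- only that $f$ and $g$ take values in $\RX$ so that the sum $f(y)+g(x-y)$ is unambiguous.
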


\begin{proposition}
\dueto{Proposition 16.48}{Livre1}
\label{sous-diff infconv exacte}
Soient $f$ et $g$ deux fonctions de $\Gamma_0(\HH)$, 
$x\in\dom(f\infconv g)$ et $y\in\HH$. Si 
$(f\infconv g)(x)=f(y)+g(x-y)$ alors $\partial(f\infconv g)(x)
=\partial f(y)\cap\partial g(x-y)$.
\end{proposition}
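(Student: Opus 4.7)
The plan is to establish the equality by proving both inclusions directly from the defining inequality of the subdifferential, namely $u \in \partial h(x) \Leftrightarrow (\forall z\in\HH)\; h(z) \geq h(x) + \scal{z-x}{u}$. The exactness hypothesis $(f\infconv g)(x) = f(y) + g(x-y)$ serves as the glue that couples the two functions at the distinguished point.

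For the inclusion $\partial f(y) \cap \partial g(x-y) \subseteq \partial(f\infconv g)(x)$, I would pick $u$ in the intersection and add the two subgradient inequalities evaluated at shifted points. Concretely, for arbitrary $w,z \in \HH$, combine
\begin{equation*}
f(w) \geq f(y) + \scal{w-y}{u} \quad\text{and}\quad g(z-w) \geq g(x-y) + \scal{(z-w)-(x-y)}{u}.
\end{equation*}
Summing and invoking exactness yields $f(w) + g(z-w) \geq (f\infconv g)(x) + \scal{z-x}{u}$; taking the infimum over $w$ produces $(f\infconv g)(z) \geq (f\infconv g)(x) + \scal{z-x}{u}$, which is exactly $u \in \partial(f\infconv g)(x)$.

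For the reverse inclusion $\partial(f\infconv g)(x) \subseteq \partial f(y) \cap \partial g(x-y)$, the idea is to probe the subgradient inequality at test points that decouple the two functions. Given $u \in \partial(f\infconv g)(x)$ and an arbitrary $w \in \HH$, I would evaluate the inequality at $z = w + x - y$ and control $(f\infconv g)(z)$ from above by $f(w) + g(x-y)$ using the very definition of the infimum. Combined with the exactness relation on the right-hand side, everything collapses to $f(w) \geq f(y) + \scal{w-y}{u}$, whence $u \in \partial f(y)$. A symmetric choice $z = y + w'$ together with the bound $(f\infconv g)(y+w') \leq f(y) + g(w')$ yields $u \in \partial g(x-y)$.

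The argument is essentially bookkeeping, and I do not anticipate any real obstacle. The only point that requires a moment's reflection is that the exactness hypothesis is precisely what is needed to swap $(f\infconv g)(x)$ for $f(y) + g(x-y)$ in the subgradient inequality; without it, only the easy inclusion $\partial f(y) \cap \partial g(x-y) \subseteq \partial(f\infconv g)(x)$ would survive, and the careful selection of test points for the converse would no longer close the loop.
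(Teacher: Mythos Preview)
Your argument is correct and complete. Note, however, that the paper does not supply its own proof of this proposition: it is quoted verbatim as \dueto{Proposition 16.48}{Livre1} and used as an external fact. The direct verification you give --- summing the two subgradient inequalities for the easy inclusion, and testing at the shifted points $z=w+(x-y)$ and $z=y+w'$ for the converse --- is precisely the standard proof one finds in the cited reference, so there is nothing to contrast.

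One small point worth making explicit in your write-up: when you cancel $g(x-y)$ (respectively $f(y)$) on both sides to conclude $u\in\partial f(y)$ (respectively $u\in\partial g(x-y)$), you are implicitly using that these quantities are finite. This follows from the exactness hypothesis together with $x\in\dom(f\infconv g)$ and the fact that $f,g\in\Gamma_0(\HH)$ never take the value $-\infty$, so $f(y)+g(x-y)=(f\infconv g)(x)<+\infty$ forces both summands to be real.
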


\begin{proposition}
\dueto{Proposition 12.14}{Livre1}
\label{infconv gamma}
Soient $f$ et $g$ deux fonctions de $\Gamma_0(\HH)$. On suppose 
qu'au moins une des deux assertions suivantes est vraie:
\begin{enumerate}
\item
$f$ est supercoercive,
\item
$f$ est coercive et $g$ est minor\'ee.
\end{enumerate}
Alors $f\infconv g=f\einfconv g\in\Gamma_0(\HH)$.
\end{proposition}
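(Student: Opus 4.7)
Le plan se d\'ecompose en trois temps : l'exactitude de l'inf-convolution (atteinte de l'infimum), la propret\'e et la convexit\'e de $f\infconv g$, enfin sa semi-continuit\'e inf\'erieure.

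Pour l'exactitude, je fixerais $x\in\HH$ et consid\'ererais $\phi_x:y\mapsto f(y)+g(x-y)$, convexe et semi-continue inf\'erieurement comme somme de telles fonctions. Il s'agit de montrer que $\phi_x$ est coercive sous chacune des hypoth\`eses. Dans le cas (ii), $\phi_x(y)\geq f(y)+\inf g$ et la coercivit\'e de $f$ suffisent. Dans le cas (i), tout \'el\'ement de $\Gamma_0(\HH)$ admettant un minorant affine $y\mapsto\scal{y}{u}+\beta$, on a $\phi_x(y)\geq f(y)+\scal{x-y}{u}+\beta$ et la supercoercivit\'e de $f$ entra\^\i ne $\phi_x(y)/\norm{y}\to+\infty$, a fortiori la coercivit\'e. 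Le th\'eor\`eme classique d'existence d'un minimiseur pour une fonction convexe sci coercive sur un espace de Hilbert fournit alors l'atteinte de l'infimum ; celui-ci est fini d\`es que $x\in\dom f+\dom g$. D'o\`u $f\infconv g=f\einfconv g$, et $f\infconv g$ ne prend jamais la valeur $-\infty$.

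La convexit\'e de $f\infconv g$ est classique et, puisque $\dom f+\dom g\neq\emp$, on obtient la propret\'e. Pour la semi-continuit\'e inf\'erieure, l'identit\'e $\epi(f\infconv g)=\epi f+\epi g$, cons\'equence de l'exactitude \'etablie, am\`ene \`a montrer que cette somme est ferm\'ee. Je prendrais une suite $(y_n+z_n,f(y_n)+g(z_n))\to(x,\alpha)$ dans $\epi f+\epi g$ : dans le cas (ii), $g(z_n)\geq\inf g$ borne $f(y_n)$ par-dessus, et la coercivit\'e de $f$ borne $(y_n)$ ; dans le cas (i), la supercoercivit\'e de $f$ combin\'ee au minorant affine de $g$ fait de m\^eme. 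Extrayant une sous-suite $y_{n_k}\weakly y$, il vient $z_{n_k}\weakly x-y$, puis la semi-continuit\'e inf\'erieure faible de $f$ et de $g$ donne $f(y)+g(x-y)\leq\alpha$, si bien que $(x,\alpha)=(y,f(y))+(x-y,\alpha-f(y))\in\epi f+\epi g$. Une alternative plus \'el\'egante dans le cas (i) passe par la dualit\'e : $(f\infconv g)^*=f^*+g^*$, et comme $\dom f^*=\HH$ (\'equivalent \`a la supercoercivit\'e de $f$), on a $f^*+g^*\in\Gamma_0(\HH)$, donc sa biconjugu\'ee aussi ; l'exactitude identifie alors $f\infconv g$ \`a cette biconjugu\'ee.

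Le point d\'elicat est naturellement la fermeture de $\epi f+\epi g$, puisqu'en dimension infinie la somme de deux convexes ferm\'es n'est pas toujours ferm\'ee. La coercivit\'e de $f$, jointe au contr\^ole par-dessus de $f(y_n)$ fourni par la minoration de $g$ (ou le minorant affine), apporte exactement la compacit\'e faible requise pour extraire une sous-suite convergente et passer \`a la limite via la semi-continuit\'e inf\'erieure faible.
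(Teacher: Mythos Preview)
The paper does not prove this proposition: it is quoted as \cite[Proposition~12.14]{Livre1} and used as a black box, so there is no in-paper argument against which to compare yours.

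Your sketch is essentially the standard proof and is sound. Two points merit tightening. First, in the closedness argument for $\epi f+\epi g$, a generic element has the form $(y+z,s+t)$ with $s\geq f(y)$ and $t\geq g(z)$, not $(y+z,f(y)+g(z))$; you should take a sequence $(y_n+z_n,s_n+t_n)\to(x,\alpha)$, bound $(s_n)$ above using the minoration of $g$ (or its affine minorant) and below using an affine minorant of $f$ together with the boundedness of $(y_n)$, then extract convergent subsequences of $(s_n)$ and $(y_n)$ separately. The mechanism you describe adapts immediately to this, so the gap is cosmetic rather than structural. Second, the duality alternative you propose for case~(i) is incomplete as stated: from $(f\infconv g)^*=f^*+g^*\in\Gamma_0(\HH)$ one deduces that the biconjugate $(f\infconv g)^{**}$ lies in $\Gamma_0(\HH)$, but this biconjugate is the lower-semicontinuous hull of $f\infconv g$, and exactitude alone does not identify the two. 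To close that route one needs a conjugate sum rule of Attouch--Br\'ezis type (here trivially available since $\dom f^*=\HH$), which gives $(f^*+g^*)^*=f\einfconv g$ directly. Your epigraph-sum argument is the cleaner and more self-contained of the two.
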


\begin{proposition}
\label{projecteur infconvolution}
Soient $f$ et $g$ deux fonctions de $\Gamma_0(\HH)$ telles que 
$\lev{0}f\neq\emp$ et $\lev{0}g\neq\emp$. On suppose $f\infconv g
=f\einfconv g\in\Gamma_0(\HH)$. Soit $M$ une s\'election de 
$x\mapsto\Argmin f+g(x-\cdot)$, de sorte que $(\forall x\in\HH)~
(f\einfconv g)(x)=f(Mx)+g(x-Mx)$. On suppose $\lev{0}(f\circ M)\cap
\lev{0}(g\circ(\Id-M))\neq\emp$ et 
$\dom(\partial f\cap\partial g)\neq\emp$. On suppose que $U$ est 
une s\'election de 
$\partial f\cap\partial g$ telle que $\im M\cup\im(\Id-M)\subset
\dom U$ et $(\forall x\in\dom U)~Ux=UMx=U(x-Mx)$.
\begin{enumerate}
\item
$G_{f\einfconv g}^U$ est bien d\'efini.
\item
\begin{multline*}
(\lev{0}(f\circ M)\cap\lev{0}(g\circ(\Id-M))\cup\dom U\subset\\
\dom\left(G_f^U\circ M\right)\cap\dom\left(G_g^U\circ(\Id-M)\right)
\cap\dom\left(G_{f\einfconv g}^U\right)
\end{multline*}
\item
Soit $x\in(\lev{0}(f\circ M)\cap\lev{0}(g\circ(\Id-M))\cup\dom U$. 
Alors
\begin{equation}
G_{f\einfconv g}^Ux=G_f^UMx+G_g^U(x-Mx)
\Leftrightarrow f(Mx)g(x-Mx)\geq0.
\end{equation}
\end{enumerate}
\end{proposition}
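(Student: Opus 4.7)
Le plan est de traiter successivement les trois assertions, en s'appuyant sur la Proposition \ref{sous-diff infconv exacte} et sur une identit\'e \'el\'ementaire concernant les parties positives.

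Pour (i), on commencerait par observer que pour tout $x\in\lev{0}(f\circ M)\cap\lev{0}(g\circ(\Id-M))$ (non vide par hypoth\`ese), la d\'ecomposition exacte fournit $(f\einfconv g)(x)=f(Mx)+g(x-Mx)\leq0$, ce qui donne $\lev{0}(f\einfconv g)\neq\emp$. Ensuite, pour $x\in\dom U$, on a $UMx\in\partial f(Mx)$ et $U(x-Mx)\in\partial g(x-Mx)$; l'hypoth\`ese $UMx=U(x-Mx)=Ux$, combin\'ee \`a la Proposition \ref{sous-diff infconv exacte}, livre $Ux\in\partial f(Mx)\cap\partial g(x-Mx)=\partial(f\einfconv g)(x)$. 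Cela fait de $U$ une s\'election de $\partial(f\einfconv g)$ sur $\dom U$, et $G_{f\einfconv g}^U$ est donc bien d\'efini.

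Pour (ii), on proc\`ede par analyse s\'epar\'ee des deux morceaux de la r\'eunion. Si $x\in\lev{0}(f\circ M)\cap\lev{0}(g\circ(\Id-M))$, alors $Mx\in\lev{0}f\subset\dom G_f^U$, $x-Mx\in\lev{0}g\subset\dom G_g^U$, et $x\in\lev{0}(f\einfconv g)\subset\dom G_{f\einfconv g}^U$ d'apr\`es (i). Si $x\in\dom U$, l'hypoth\`ese $\im M\cup\im(\Id-M)\subset\dom U$ garantit $Mx,x-Mx\in\dom U$, d'o\`u imm\'ediatement les trois appartenances.

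Pour (iii), l'id\'ee est de calculer directement la diff\'erence. En posant $a=f(Mx)$ et $b=g(x-Mx)$, en utilisant $UMx=U(x-Mx)=Ux$ et l'\'ecriture unifi\'ee des trois op\'erateurs \`a l'aide de la partie positive, on obtient, lorsque $Ux\neq0$,
\begin{equation}
G_f^UMx+G_g^U(x-Mx)-G_{f\einfconv g}^Ux=\Frac{[a+b]^+-[a]^+-[b]^+}{\norm{Ux}^2}Ux.
\end{equation}
Le cas $Ux=0$ se r\`egle s\'epar\'ement: le Lemme \ref{sous-gradient non nul} impose alors $a\leq0$ et $b\leq0$, de sorte que les trois op\'erateurs co\"incident avec l'identit\'e aux points concern\'es, l'\'egalit\'e est triviale et la condition $ab\geq0$ est satisfaite. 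L'\'etape cruciale est donc l'identit\'e \'el\'ementaire $[a+b]^+=[a]^++[b]^+\Leftrightarrow ab\geq0$, qui se v\'erifie par une disjonction soigneuse selon les signes de $a$ et $b$ (en incluant les cas o\`u l'un au moins s'annule); c'est l\`a, conjointement au traitement du cas d\'eg\'en\'er\'e $Ux=0$, que r\'eside le principal obstacle, le reste n'\'etant qu'une v\'erification m\'ecanique.
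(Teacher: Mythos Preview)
Your proposal is correct and follows essentially the same route as the paper: both establish (i) via Proposition~\ref{sous-diff infconv exacte}, check (ii) by direct inspection of domains, and reduce (iii) to the elementary identity $[a]^++[b]^+=[a+b]^+\Leftrightarrow ab\geq0$ (the paper spells out the five sign cases explicitly, you state the equivalence abstractly, but the content is identical). One small oversight: your dichotomy $Ux\neq0$ versus $Ux=0$ tacitly assumes $x\in\dom U$, so the case $x\in\bigl(\lev{0}(f\circ M)\cap\lev{0}(g\circ(\Id-M))\bigr)\setminus\dom U$ is not formally covered---but there $a\leq0$, $b\leq0$ and all three operators reduce to the identity, exactly as in your $Ux=0$ argument, and the paper handles this by treating the case $f(Mx)\leq0$, $g(x-Mx)\leq0$ first regardless of whether $x\in\dom U$.
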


\begin{proof}
Notons $C=\lev{0}(f\circ M)$ et $D=\lev{0}(g\circ(\Id-M))$, on 
rappelle que $C\cap D\neq\emp$ par hypoth\`ese.
\begin{enumerate}
\item
D'une part $\lev{0}(f\einfconv g)\supset C\cap D\neq\emp$
, d'autre part
\begin{equation}
(\forall x\in\dom U)~Ux=UMx=U(x-Mx)\in\partial
f(Mx)\cap\partial g(x-Mx)=\partial(f\einfconv g)(x)
\end{equation}
d'apr\`es la Proposition \ref{sous-diff infconv exacte}.
\item
On a
\begin{equation}
\begin{cases}
\dom\left(G_f^U\circ M\right)=C\cup M^{-1}\left(\dom U\right)\\
\dom\left(G_g^U\circ(\Id-M)\right)=D\cup(\Id-M)^{-1}(\dom U)\\
\dom\left(G_{f\einfconv g}^U\right)=\lev{0}(f\einfconv g)\cup\dom U
\end{cases}
\end{equation} et 
\begin{equation}
\begin{cases}
\dom U\subset M^{-1}(\dom U)\\
\dom U\subset (\Id-M)^{-1}(\dom U)\\
C\cap D\subset\lev{0}(f\einfconv g).
\end{cases}
\end{equation}
Par cons\'equent $(C\cap D)\cup\dom U\subset\dom
\left(G_f^U\circ M\right)\cap\dom\left(G_g^U\circ(\Id-M)\right)\cap
\dom\left(G_{f\einfconv g}^U\right)$.
\item
Notons $y=Mx$. Si $f(Mx)\leq0$ et $g(x-Mx)\leq0$ alors 
$f\einfconv g(x)=f(y)+g(x-y)\leq0$, d'o\`u $G_{f\einfconv g}^Ux=x
=y+(x-y)=G_f^Uy+G_g^U(x-y)$. Supposons $x\in\complement(C\cap D)
\cap\dom U$ et posons $u=Ux$. Alors $\begin{cases}
G_{f\einfconv g}^Ux=x-\Frac{\left[f(y)+g(x-y)\right]^+}
{\norm{u}^2}u\\
G_f^Uy=y-\Frac{\left[f(y)\right]^+}{\norm{u}^2}u\\
G_g^U(x-y)=x-y-\Frac{\left[g(x-y)\right]^+}{\norm{u}^2}u
\end{cases}$. D'o\`u
\begin{equation}
G_{f\infconv g}^Ux-G_f^U y-G_g^U(x-y)=(\left[f(y)\right]^+
+\left[g(x-y)\right]^+-\left[f(y)+g(x-y)\right]^+)\Frac{u}
{\norm{u}^2}.
\end{equation}
Ainsi
\begin{equation}
G_{f\infconv g}^Ux-G_f^Uy-G_g^U(x-y)=
\begin{cases}
\Frac{g(x-y)}{\norm{u}^2}u & \text{si }f(y)\leq0,
g(x-y)>0\text{ et }(f\einfconv g)(x)\leq0\\
-\Frac{f(y)}{\norm{u}^2}u & \text{si }f(y)\leq0,
g(x-y)>0\text{ et }(f\einfconv g)(x)>0\\
\Frac{f(y)}{\norm{u}^2}u & \text{si }f(y)>0,g(x-y)
\leq0\text{ et }(f\einfconv g)(x)\leq0\\
-\Frac{g(x-y)}{\norm{u}^2}u & \text{si }f(y)>0,
g(x-y)\leq0\text{ et }(f\einfconv g)(x)>0\\
0 & \text{si }f(y)>0\text{ et }g(x-y)>0.
\end{cases}
\end{equation}
\end{enumerate}
\end{proof}

\subsection{Propri\'et\'es li\'ees \`a l'enveloppe de Moreau}

\begin{notation}[Enveloppe de Moreau]
\dueto{Definition 12.20}{Livre1}
Soient $f\in\Gamma_0(\HH)$ et $\gamma\in\RR_{++}$. Alors
\begin{equation}
^{\gamma}f=f\infconv\left(\Frac{1}{2\gamma}\norm{\cdot}^2\right).
\end{equation}
\end{notation}

\begin{proposition}
\dueto{Proposition 12.29}{Livre1}
\label{diff Moreau}
Soient $f\in\Gamma_0(\HH)$ et $\gamma\in
\RR_{++}$. $^{\gamma}f$ est Fr\'echet-diff\'erentiable sur $\HH$ et
\begin{equation}
\nabla(^{\gamma}f)=\gamma^{-1}(\Id-\prox_{\gamma f}).
\end{equation}
\end{proposition}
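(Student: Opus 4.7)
Le plan est d'exploiter le fait que l'infimum dans la d\'efinition de ${}^{\gamma}f$ est atteint en $\prox_{\gamma f}(x)$ (\dueto{Proposition 12.15}{Livre1}). En posant $p=\prox_{\gamma f}(x)$ pour $x\in\HH$ fix\'e, on dispose ainsi de l'\'egalit\'e
\begin{equation}
{}^{\gamma}f(x)=f(p)+\Frac{1}{2\gamma}\norm{x-p}^2.
\end{equation}
L'id\'ee centrale consiste \`a encadrer, pour $x'\in\HH$ arbitraire et en posant $p'=\prox_{\gamma f}(x')$, la diff\'erence ${}^{\gamma}f(x')-{}^{\gamma}f(x)$ en substituant tant\^ot $p$, tant\^ot $p'$ comme point-test dans l'infimum d\'efinissant ${}^{\gamma}f$. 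Cette sym\'etrie est le ressort essentiel de la preuve.

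D'une part, comme ${}^{\gamma}f(x')\leq f(p)+\Frac{1}{2\gamma}\norm{x'-p}^2$ par d\'efinition de l'infimum, un d\'eveloppement de $\norm{x'-p}^2=\norm{(x-p)+(x'-x)}^2$ conduit imm\'ediatement \`a
\begin{equation}
{}^{\gamma}f(x')-{}^{\gamma}f(x)\leq\Frac{1}{\gamma}\scal{x-p}{x'-x}+\Frac{1}{2\gamma}\norm{x'-x}^2.
\end{equation}
D'autre part, l'in\'egalit\'e sym\'etrique ${}^{\gamma}f(x)\leq f(p')+\Frac{1}{2\gamma}\norm{x-p'}^2$ donne, par le m\^eme calcul,
\begin{equation}
{}^{\gamma}f(x')-{}^{\gamma}f(x)\geq\Frac{1}{\gamma}\scal{x-p'}{x'-x}+\Frac{1}{2\gamma}\norm{x'-x}^2.
\end{equation}

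La derni\`ere \'etape repose sur la 1-lipschitzianit\'e de $\prox_{\gamma f}$ (cons\'equence classique de sa contractance ferme), qui fournit $\norm{p-p'}\leq\norm{x-x'}$ puis, par Cauchy-Schwarz, $|\scal{p-p'}{x'-x}|\leq\norm{x'-x}^2$. En r\'e\'ecrivant $\scal{x-p'}{x'-x}=\scal{x-p}{x'-x}+\scal{p-p'}{x'-x}$ dans la minoration, on obtient l'encadrement
\begin{equation}
\left|{}^{\gamma}f(x')-{}^{\gamma}f(x)-\Frac{1}{\gamma}\scal{x-p}{x'-x}\right|\leq\Frac{1}{2\gamma}\norm{x'-x}^2,
\end{equation}
ce qui \'etablit la Fr\'echet-diff\'erentiabilit\'e de ${}^{\gamma}f$ en $x$ et identifie son gradient \`a $\gamma^{-1}(x-p)=\gamma^{-1}(x-\prox_{\gamma f}(x))$. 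La seule v\'eritable difficult\'e n'est pas calculatoire mais conceptuelle\,: elle r\'eside dans la mobilisation de deux propri\'et\'es fines de $\prox_{\gamma f}$ --- l'existence et l'unicit\'e du minimiseur dans la d\'efinition de l'enveloppe, et son caract\`ere fermement contractant --- toutes deux provenant de la stricte convexit\'e de $y\mapsto f(y)+\frac{1}{2\gamma}\norm{x-y}^2$ et de la monotonie du sous-diff\'erentiel, et que l'on peut invoquer comme r\'esultats pr\'eliminaires.
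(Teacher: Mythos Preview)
Ton argument est correct. L'encadrement que tu obtiens est exact\,: la majoration d\'ecoule directement du d\'eveloppement de $\norm{x'-p}^2$, et pour la minoration, une fois \'ecrit $\scal{x-p'}{x'-x}=\scal{x-p}{x'-x}+\scal{p-p'}{x'-x}$, la borne $\scal{p-p'}{x'-x}\geq-\norm{x'-x}^2$ (via Cauchy--Schwarz et la $1$-lipschitzianit\'e de $\prox_{\gamma f}$) donne bien ${}^{\gamma}f(x')-{}^{\gamma}f(x)-\gamma^{-1}\scal{x-p}{x'-x}\geq-\frac{1}{2\gamma}\norm{x'-x}^2$, d'o\`u l'encadrement final annonc\'e.

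Il faut toutefois signaler que le m\'emoire ne d\'emontre pas cette proposition\,: elle y est simplement \'enonc\'ee comme un r\'esultat connu, avec renvoi \`a \dueto{Proposition 12.29}{Livre1}. Il n'y a donc pas de preuve \og{}maison\fg{} \`a laquelle comparer la tienne. Ta d\'emonstration est d'ailleurs essentiellement celle de la r\'ef\'erence cit\'ee\,: m\^eme id\'ee de double substitution des points-tests $p$ et $p'$ dans l'infimum, m\^eme recours \`a la contractance ferme de $\prox_{\gamma f}$ pour contr\^oler le terme r\'esiduel. Tu fournis donc une preuve autonome et fid\`ele \`a l'approche classique l\`a o\`u le m\'emoire se contente d'une citation.
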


\begin{proposition}
Soient $f\in\Gamma_0(\HH)$ et $\gamma\in
\RR_{++}$ tels que $\lev{0}(^{\gamma}f)\neq\emp$. 
Alors $\dom G_{^{\gamma}f}=\HH$, $G_{^{\gamma}f}$ 
est univoque et
\begin{equation}
G_{^{\gamma}f}x=
\begin{cases}
x & \text{si }^{\gamma}f(x)\leq0\\
x-\gamma\Frac{^{\gamma}f(x)}{\norm{x-\prox_{\gamma f}x}^2}
(x-\prox_{\gamma f}x) & \text{si }^{\gamma}f(x)>0
\end{cases}
\end{equation}
\end{proposition}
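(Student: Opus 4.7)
The plan is to reduce the statement to a direct application of Remark \ref{projections}\ref{lien proj metrique} (more precisely the univalued-gradient formula given just after it in that remark) combined with the explicit expression of $\nabla(^{\gamma}f)$ provided by Proposition \ref{diff Moreau}.

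First I would check that $G_{^{\gamma}f}$ is well-defined. Since $\frac{1}{2\gamma}\norm{\cdot}^2$ is supercoercive, Proposition \ref{infconv gamma} guarantees $^{\gamma}f\in\Gamma_0(\HH)$, and the hypothesis $\lev{0}(^{\gamma}f)\neq\emp$ ensures the definition of the subdifferential projector applies. Next, by Proposition \ref{diff Moreau}, $^{\gamma}f$ is Fr\'echet-diff\'erentiable on all of $\HH$; in particular $\dom(^{\gamma}f)=\HH$ and $\dom\partial(^{\gamma}f)=\HH$, so Remark \ref{projections} yields $\dom G_{^{\gamma}f}=\HH$ and the univalued formula
\begin{equation}
G_{^{\gamma}f}x=
\begin{cases}
x & \text{si }{^{\gamma}f}(x)\leq0\\
x-\Frac{^{\gamma}f(x)}{\norm{\nabla(^{\gamma}f)(x)}^2}\nabla(^{\gamma}f)(x) & \text{si }{^{\gamma}f}(x)>0.
\end{cases}
\end{equation}

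The only remaining task is a direct substitution. For $x$ tel que ${^{\gamma}f}(x)>0$, Proposition \ref{diff Moreau} donne $\nabla(^{\gamma}f)(x)=\gamma^{-1}(x-\prox_{\gamma f}x)$, d'o\`u $\norm{\nabla(^{\gamma}f)(x)}^2=\gamma^{-2}\norm{x-\prox_{\gamma f}x}^2$. En reportant dans l'expression ci-dessus, le facteur $\gamma^{-1}/\gamma^{-2}=\gamma$ appara\^\i t et l'on obtient exactement
\begin{equation}
G_{^{\gamma}f}x=x-\gamma\Frac{^{\gamma}f(x)}{\norm{x-\prox_{\gamma f}x}^2}(x-\prox_{\gamma f}x).
\end{equation}

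There is no real obstacle: the statement is essentially a rewriting of the general formula for $G_f$ in the differentiable case, once $\nabla(^{\gamma}f)$ is known. The only point that deserves a moment of care is verifying the non-vanishing of $\nabla(^{\gamma}f)(x)$ when ${^{\gamma}f}(x)>0$, but this is automatic by Lemma \ref{sous-gradient non nul} applied to $^{\gamma}f$, since $\nabla(^{\gamma}f)(x)\in\partial(^{\gamma}f)(x)$.
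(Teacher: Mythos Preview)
Your proposal is correct and matches the paper's approach: the paper states this proposition without proof, placing it immediately after Proposition \ref{diff Moreau} as a direct consequence, and your argument spells out precisely this intended reasoning (verify $^{\gamma}f\in\Gamma_0(\HH)$, invoke Fr\'echet-differentiability and the explicit gradient from Proposition \ref{diff Moreau}, then plug into the univalued formula of Remark \ref{projections}). There is nothing to add.
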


\begin{remark}
Soit $C\subset\HH$ convexe ferm\'e non vide. 
En posant $f=\iota_C$ et $\gamma=1$ on retrouve l'Exemple 
\ref{proj distance carree} (d'apr\`es 
\dueto{Example 12.21}{Livre1}), \textit{i.e.} 
$G_{d_C^2/2}=G_{d_C^2}=\Frac{\Id+P_C}{2}$ (o\`u l'on a appliqu\'e 
la Proposition \ref{proj produit externe}).
\end{remark}

%
%

\section{R\'egularit\'e des s\'elections de $G_f$}

Int\'eressons-nous au propri\'et\'es de r\'egularit\'e des 
s\'elections du projecteur sous-diff\'erentiel, \`a savoir leur 
continuit\'e, leur diff\'erentiabilit\'e et leur caract\`ere 
lipschitzien.

\subsection{Continuit\'e des s\'elections de $G_f$}

\begin{proposition}
\dueto{Proposition 17.32}{Livre1}
\label{Frechet et selection}
Soit $f\in\Gamma_0(\HH)$ telle que $\lev{0}f\neq\emp$. On suppose 
$x\in\intdom f$. Les assertions suivantes sont \'equivalentes.
\begin{enumerate}
\item Il existe une s\'election $U$ de $\partial f$, telle que $x\in
\inte\dom U$, continue en $x$.
\item $f$ est Fr\'echet-diff\'erentiable en $x$.
\item Toute s\'election $U$ de $\partial f$ telle que $x\in
\inte\dom U$ est continue en $x$.
\end{enumerate}
\end{proposition}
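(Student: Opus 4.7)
Mon plan consiste \`a \'etablir la cha\^ine d'implications (iii) $\Rightarrow$ (i) $\Rightarrow$ (ii) $\Rightarrow$ (iii). L'implication (iii) $\Rightarrow$ (i) est imm\'ediate une fois assur\'ee l'existence d'une s\'election $U$ avec $x\in\inte\dom U$ ; or comme $x\in\intdom f$, on a $\partial f(y)\neq\emp$ sur tout un voisinage de $x$ (fait classique sur les fonctions convexes), d'o\`u l'existence d'une telle s\'election par choix ponctuel.

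Pour (i) $\Rightarrow$ (ii), je partirais d'une s\'election $U$ de $\partial f$ continue en $x$ avec $x\in\inte\dom U$. D'abord, j'\'etablirais $\partial f(x)=\{Ux\}$ : pour $v\in\partial f(x)$ et $h\in\HH$, la monotonie de $\partial f$ donne $\scal{U(x+th)-v}{th}\geq 0$ pour $t>0$ petit ; apr\`es division par $t$ et passage \`a la limite $t\to 0^+$ via la continuit\'e de $U$, on obtient $\scal{Ux-v}{h}\geq 0$ pour tout $h\in\HH$, d'o\`u $v=Ux$. Ensuite, les in\'egalit\'es de sous-gradient en $x$ et en $y$ voisin de $x$ encadrent l'accroissement :
\begin{equation}
\scal{Ux}{y-x}\leq f(y)-f(x)\leq\scal{Uy}{y-x},
\end{equation}
ce qui entra\^ine
\begin{equation}
0\leq f(y)-f(x)-\scal{Ux}{y-x}\leq\scal{Uy-Ux}{y-x}\leq\norm{Uy-Ux}\norm{y-x}.
\end{equation}
La continuit\'e de $U$ en $x$ livre alors directement la Fr\'echet-diff\'erentiabilit\'e de $f$ en $x$ avec $\nabla f(x)=Ux$.

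Pour (ii) $\Rightarrow$ (iii), on a $\partial f(x)=\{\nabla f(x)\}$ par hypoth\`ese. Soient $U$ une s\'election de $\partial f$ avec $x\in\inte\dom U$ et $(x_n)_{n\in\NN}$ une suite convergeant vers $x$ ; il s'agit d'obtenir $Ux_n\to\nabla f(x)$ en norme. L'argument combine l'in\'egalit\'e de sous-gradient $f(x_n+h)\geq f(x_n)+\scal{Ux_n}{h}$ et l'asymptotique de Fr\'echet $f(z)=f(x)+\scal{\nabla f(x)}{z-x}+o(\norm{z-x})$ en $x$, ce qui donne pour tout $h\in\HH$ de norme assez petite
\begin{equation}
\scal{Ux_n-\nabla f(x)}{h}\leq o(\norm{x_n+h-x})+o(\norm{x_n-x}).
\end{equation}
En choisissant $h=tv$ avec $\norm{v}=1$ et $t=\sqrt{\norm{x_n-x}}$, puis en passant au supremum sur la sph\`ere unit\'e, on conclut $\norm{Ux_n-\nabla f(x)}\to 0$.

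L'obstacle principal sera ce dernier passage de (ii) \`a (iii) : la Fr\'echet-diff\'erentiabilit\'e, strictement plus forte que la G\^ateaux-diff\'erentiabilit\'e, est pr\'ecis\'ement ce qui assure la convergence \emph{forte} (et non seulement faible) de $Ux_n$ vers $\nabla f(x)$, gr\^ace au caract\`ere uniforme en direction du reste dans la d\'efinition de Fr\'echet. Les deux autres implications, en revanche, ne demandent que la monotonie du sous-diff\'erentiel et l'in\'egalit\'e de sous-gradient, et restent purement calculatoires.
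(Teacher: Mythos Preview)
The paper does not prove this proposition; it simply invokes \cite[Proposition~17.32]{Livre1} and moves on. Your proposal, by contrast, supplies a self-contained argument, so the comparison is ``citation versus direct proof''.

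Your plan is sound. The cycle (iii) $\Rightarrow$ (i) $\Rightarrow$ (ii) $\Rightarrow$ (iii) works as you describe: for (i) $\Rightarrow$ (ii) the monotonicity argument pins down $\partial f(x)=\{Ux\}$ and the sandwich $\scal{Ux}{y-x}\leq f(y)-f(x)\leq\scal{Uy}{y-x}$ then gives the Fr\'echet estimate directly. For (ii) $\Rightarrow$ (iii), combining the subgradient inequality at $x_n$ with the Fr\'echet expansion at $x$ indeed yields, for any $\epsilon>0$ and $n$ large,
\[
\scal{Ux_n-\nabla f(x)}{h}\leq\epsilon\big(\norm{h}+2\norm{x_n-x}\big),
\]
and your choice $h=tv$ with $t=\sqrt{\norm{x_n-x}}$ gives $\norm{Ux_n-\nabla f(x)}\leq\epsilon\big(1+2\sqrt{\norm{x_n-x}}\big)$ after taking the supremum over unit $v$, hence $\limsup\norm{Ux_n-\nabla f(x)}\leq\epsilon$. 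This is exactly where the uniformity in the Fr\'echet remainder is used, as you correctly flag. Relative to the paper's bare citation, your argument has the merit of making transparent why Fr\'echet (and not merely G\^ateaux) differentiability is the hypothesis that forces \emph{strong} convergence of every selection.
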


\begin{proposition}
Soient $f\in\Gamma_0(\HH)$ telle que $\lev{0}f\neq\emp$ et $U$ une 
s\'election de $\partial f$. On suppose $x\in\clev{0}f\cap
\inte\dom U$. Les assertions suivantes sont \'equivalentes.
\begin{enumerate}
\item $G_f^U$ est continu en $x$.
\item $U$ est continue en $x$.
\item $f$ est Fr\'echet-diff\'erentiable en $x$.
\end{enumerate}
\end{proposition}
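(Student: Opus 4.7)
Le plan consiste d'abord à obtenir (ii)$\Leftrightarrow$(iii) comme conséquence immédiate de la Proposition \ref{Frechet et selection}, puisque $x \in \inte\dom U$ par hypothèse. Reste donc à établir (i)$\Leftrightarrow$(ii).

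La clef est de remarquer que, $x$ appartenant à $\inte\dom U \subset \inte\dom f$, la fonction $f$ est continue en $x$; comme $f(x) > 0$, il existe donc un voisinage $V$ de $x$ inclus dans $\clev{0}f \cap \inte\dom U$. Sur ce voisinage, le Lemme \ref{sous-gradient non nul} assure que $Uy \neq 0$, et l'on dispose de l'expression explicite
\[
G_f^U y = y - \Frac{f(y)}{\norm{Uy}^2}\, Uy.
\]
L'implication (ii)$\Rightarrow$(i) est alors directe : sur $V$, $G_f^U$ s'écrit comme composition d'opérations continues appliquées aux fonctions $\Id$, $f$ et $U$, la continuité de $f$ en $x$ étant acquise et $\norm{Uy}^2$ étant non nul en $x$.

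Pour (i)$\Rightarrow$(ii), le principe est d'inverser cette formule pour en extraire $U$. En passant à la norme on obtient $\norm{y - G_f^U y} = f(y)/\norm{Uy}$; comme $f(x)>0$ et $Ux \neq 0$, cette quantité est strictement positive en $x$, donc dans tout un voisinage par continuité supposée de $G_f^U$ et par continuité de $f$. On en déduit successivement la continuité de $y \mapsto \norm{Uy} = f(y)/\norm{y - G_f^U y}$ puis, en réinjectant, celle de $Uy = \norm{Uy}^2 f(y)^{-1}(y - G_f^U y)$.

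L'obstacle principal, somme toute modeste, tient à la justification de la non-annulation des dénominateurs dans un voisinage de $x$; elle résulte systématiquement de la stricte positivité de $f(x)$ et du Lemme \ref{sous-gradient non nul}, qui fournit $Ux \neq 0$.
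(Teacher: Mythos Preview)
Votre démonstration est correcte et suit essentiellement la même démarche que celle du papier : utilisation de la Proposition~\ref{Frechet et selection} pour (ii)$\Leftrightarrow$(iii), puis inversion de la formule $G_f^U = \Id - \dfrac{f}{\norm{U}^2}U$ sur un voisinage inclus dans $\clev{0}f\cap\inte\dom U$ pour (i)$\Leftrightarrow$(ii). La seule nuance, sans portée réelle, est dans l'ordre d'inversion : le papier récupère d'abord $U/\norm{U}^2 = (\Id - G_f^U)/f$ puis $\norm{U}^2$ comme inverse de sa norme au carré, tandis que vous obtenez d'abord $\norm{U}$ en prenant la norme de $\Id - G_f^U$ ; les deux chemins mènent immédiatement à la continuité de $U$.
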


\begin{proof}
D'apr\`es \dueto{Corollary 8.30}{Livre1}, $\clev{0}f\cap 
\inte\dom U\subset\inte\dom f=\cont f$, donc $f$ est continue en 
$x$. On rappelle que, sur $\clev{0}f\cap\dom U$,
\begin{equation}
G_f^U=\Id-\Frac{f}{\norm{U}^2}U.
\end{equation}
Si $f$ est Fr\'echet-diff\'erentiable en $x$ alors la continuit\'e 
de $G_f^U$ en $x$ d\'ecoule directement de la Proposition 
\ref{Frechet et selection} et de l'\'egalit\'e ci-dessus. 
R\'eciproquement, supposons $G_f^U$ continu en $x$. Alors $\Frac{U}
{\norm{U}^2}=\Frac{\Id-G_f^U}{f}$ 
est continu en $x$ et $\norm{U}^2=\norm{\cdot}^{-2}\circ
\Frac{U}{\norm{U}^2}$ aussi. D'o\`u la continuit\'e en $x$ de 
$U=\Frac{\norm{U}^2}{f}\left(\Id-G^U_f\right)$. On en d\'eduit, 
via la Proposition \ref{Frechet et selection}, que $f$ est 
Fr\'echet-diff\'erentiable en $x$.
\end{proof}

\begin{corollary}
Soient $f\in\Gamma_0(\HH)$ telle que $\lev{0}f\neq\emp$ et $U$ une 
s\'election de $\partial f$. Les assertions suivantes 
sont \'equivalentes.
\begin{enumerate}
\item $G_f^U$ est continu sur $(\inte\lev{0}f)\cup(\inte\dom U)$.
\item $f$ est Fr\'echet-diff\'erentiable sur $\clev{0}f
\cap\inte\dom U$.
\end{enumerate}
Dans ce cas
\begin{equation}
G_f^U:\dom G_f^U\rightarrow2^\HH:x\mapsto
\begin{cases}
x & \text{si }f(x)\leq0\\
x-\Frac{f(x)}{\norm{\nabla f(x)}^2}\nabla f(x) & \text{si }f(x)>0
\text{ et }x\in\inte\dom U\\
x-\Frac{f(x)}{\norm{Ux}^2}Ux & \text{sinon.}
\end{cases}
\end{equation}
\end{corollary}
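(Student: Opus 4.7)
The approach is to reduce this global statement to the pointwise version (the preceding proposition) by partitioning $(\inte\lev{0}f)\cup(\inte\dom U)$ into three regions and handling each separately. Note first that $\inte\dom U \subset \inte\dom\partial f \subset \intdom f = \cont f$ by \dueto{Corollary 8.30}{Livre1}, so $f$ is continuous on $\inte\dom U$; consequently any $x \in \lev{0}f \cap \inte\dom U$ with $f(x) < 0$ already lies in $\inte\lev{0}f$, and the only case not covered either by (a) $\inte\lev{0}f$ or (b) $\clev{0}f \cap \inte\dom U$ is (c) the set of points $x \in \inte\dom U$ with $f(x) = 0$.

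For the implication (i)$\Rightarrow$(ii), I would simply apply the preceding proposition at each $x \in \clev{0}f \cap \inte\dom U$: continuity of $G_f^U$ at $x$ forces Fr\'echet-differentiability of $f$ at $x$.

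For (ii)$\Rightarrow$(i), case (a) is trivial since $G_f^U = \Id$ on $\lev{0}f$ and this identity persists on a neighborhood of any interior point, while case (b) is handled directly by the preceding proposition. The main obstacle is case (c), where $f$ need not be assumed Fr\'echet-differentiable and the pointwise proposition no longer applies. Here I would exploit the quasi-contraction estimate of Proposition \ref{quasi-contractance}: taking $x_0 \in \inte\dom U$ with $f(x_0)=0$, so $x_0 \in \lev{0}f$, and any $y$ in the domain of $G_f^U$, the inequality applied with reference point $x_0 \in \lev{0}f$ yields $\norm{y - G_f^U y}^2 \leq \norm{y - x_0}^2 - \norm{G_f^U y - x_0}^2 \leq \norm{y - x_0}^2$; hence as $y \to x_0$, both $y$ and $G_f^U y$ converge to $x_0 = G_f^U x_0$, establishing continuity with no case split on the sign of $f(y)$.

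Finally, the closed-form expression follows by substituting into the definition of $G_f^U$: on $\clev{0}f \cap \inte\dom U$, Fr\'echet-differentiability forces $\partial f(x) = \{\nabla f(x)\}$, so every selection satisfies $Ux = \nabla f(x)$ there, producing the middle branch; the first branch is the definition on $\lev{0}f$ and the third is the definition on $\dom U \setminus \inte\dom U$.
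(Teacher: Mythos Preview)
Your proof is correct and in fact more thorough than the paper, which states the result as an unproved corollary of the pointwise proposition. The implication (i)$\Rightarrow$(ii) and the regions (a) and (b) are exactly the immediate deductions the paper has in mind. Your treatment of region (c)---points $x\in\inte\dom U$ with $f(x)=0$, which lie in $\lev{0}f$ but not necessarily in $\inte\lev{0}f$---fills a gap the paper leaves implicit: the pointwise proposition does not apply there, and you correctly invoke the quasi-contractance estimate of Proposition~\ref{quasi-contractance} to obtain continuity without any differentiability hypothesis. This is precisely the mechanism the paper later isolates in Lemma~\ref{proj sci sur lev} for the multivalued operator, so your argument is fully in line with the paper's toolkit even though it is not spelled out at this point in the text.
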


\subsection{Diff\'erentiabilit\'e et caract\`ere lispchitzien des 
s\'elections de $G_f$}

\subsubsection{Diff\'erentiabilit\'e}

\begin{lemma}
\label{diff inv}
$\norm{\cdot}^{-2}\Id$ est Fr\'echet-diff\'erentiable 
sur $\HH\backslash\{0\}$ et
\begin{equation}
(\forall x\in\HH\backslash\{0\})~\Diff(\norm{\cdot}^{-2}
\Id)(x)=\norm{x}^{-2}\Id-\norm{x}^{-4}\scal{2x}{\cdot}x
\end{equation}
\end{lemma}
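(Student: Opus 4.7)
The plan is to view $g \colon x \mapsto \norm{x}^{-2} x$ as the product of a scalar-valued function and a vector-valued function, then apply the product rule for Fréchet derivatives. Write $g = \phi \cdot \Id$, where $\phi \colon \HH \setminus \{0\} \to \RR : x \mapsto \norm{x}^{-2}$.

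First, I would establish that $\phi$ is Fréchet-differentiable on $\HH \setminus \{0\}$ by the chain rule: the map $x \mapsto \norm{x}^2$ is Fréchet-differentiable on all of $\HH$ with gradient $2x$ (a standard fact in a Hilbert space), and $t \mapsto t^{-1}$ is smooth on $\RR_{++}$ with derivative $-t^{-2}$. Composing yields
\begin{equation}
\left(\forall x\in\HH\setminus\{0\}\right)\quad\nabla\phi(x) = -\norm{x}^{-4}\cdot 2x = -2\norm{x}^{-4}x.
\end{equation}

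Next, I would apply the product rule: if $\phi$ is scalar-valued differentiable and $F$ is vector-valued differentiable on a common domain, then $\Diff(\phi F)(x)(h) = (\Diff\phi(x)(h))\,F(x) + \phi(x)\,\Diff F(x)(h)$. Taking $F = \Id$ (so $\Diff F(x) = \Id$), this gives
\begin{equation}
\Diff g(x)(h) = \scal{-2\norm{x}^{-4}x}{h}\,x + \norm{x}^{-2} h = \norm{x}^{-2} h - \norm{x}^{-4}\scal{2x}{h}x,
\end{equation}
which is exactly the claimed formula.

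I do not anticipate a real obstacle here, only a minor technical verification: one must check that the scalar-times-vector product rule is valid in the Hilbert-space setting, which is routine since the tensor/outer-product structure of the error term is controlled by $\norm{\cdot}$. Alternatively, a completely self-contained proof proceeds by direct expansion: using $\norm{x+h}^2 = \norm{x}^2 + 2\scal{x}{h} + \norm{h}^2$, expand $\norm{x+h}^{-2}$ as the geometric series $\norm{x}^{-2}\sum_{k\geq 0}(-r)^k$ with $r = (2\scal{x}{h} + \norm{h}^2)/\norm{x}^2$ (valid for $\norm{h} < \norm{x}/2$ say), multiply by $x+h$, collect the terms linear in $h$, and bound the remainder by a constant times $\norm{h}^2$ uniformly in the direction of $h$. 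Either route delivers the stated derivative formula.
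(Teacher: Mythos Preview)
Your proposal is correct, and in fact more explicit than the paper's own proof, which simply cites a differential calculus chapter of \cite{SR08} (``Calcul effectu\'e gr\^ace \`a [X. Fonctions diff\'erentiables]{SR08}'') without writing out any steps. Your product-rule decomposition $g=\phi\cdot\Id$ with $\phi(x)=\norm{x}^{-2}$, combined with the chain rule for $\phi$, is exactly the kind of standard computation that reference covers, so the two approaches are essentially the same.
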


\begin{proof}
Calcul effectu\'e gr\^ace \`a \dueto{X. Fonctions 
diff\'erentiables}{SR08}.
\end{proof}

\begin{remark}
Dans le cas r\'eel on retrouve la d\'eriv\'ee de la fonction 
inverse $\phi:\RR\backslash\{0\}\rightarrow\RR\backslash\{0\}:
t\mapsto\Frac{1}{t}$, qui n'est autre que $\phi':\RR\backslash\{0\}
\rightarrow\RR\backslash\{0\}:t\mapsto-\Frac{1}{t^2}$.
\end{remark}

\begin{lemma}
\label{gradient ordre deux}
Soit $f\in\Gamma_0(\HH)$. 
On suppose $x\in\inte\dom f$. On suppose $f$ 
Fr\'echet-diff\'erentiable au voisinage de $x$. Les assertions 
suivantes sont \'equivalentes.
\begin{enumerate}
\item
$\nabla f$ est Fr\'echet-diff\'erentiable en $x$.
\item
$f$ est deux fois Fr\'echet-diff\'erentiable en $x$.
\end{enumerate}
Dans ce cas
\begin{equation}
\Diff(\nabla f)(x)=\nabla^2f(x).
\end{equation}
\end{lemma}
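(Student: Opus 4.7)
L'objectif est d'\'etablir l'\'equivalence entre la Fr\'echet-diff\'erentiabilit\'e de $\nabla f$ en $x$ et l'existence d'un d\'eveloppement de Taylor d'ordre deux de $f$ en $x$. L'hypoth\`ese que $f$ est Fr\'echet-diff\'erentiable au voisinage de $x$ garantit que $\nabla f$ est bien d\'efinie localement, ce qui rend la formulation de (i) signifiante. L'identit\'e $\Diff(\nabla f)(x)=\nabla^2 f(x)$ se lira directement sur l'op\'erateur associ\'e \`a la forme quadratique issue du d\'eveloppement, via l'isomorphisme de Riesz.

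Pour le sens (i) $\Rightarrow$ (ii), on suppose $\nabla f$ Fr\'echet-diff\'erentiable en $x$ de d\'eriv\'ee $L\in\BL(\HH,\HH)$. Sur un voisinage convexe de $x$ on applique le th\'eor\`eme fondamental du calcul
\begin{equation*}
f(x+h)-f(x)=\int_0^1\scal{\nabla f(x+th)}{h}\,dt,
\end{equation*}
puis on utilise l'estimation $\nabla f(x+th)=\nabla f(x)+tL(h)+o(t\norm{h})$, uniforme en $t\in[0,1]$. Int\'egrer terme \`a terme donne
\begin{equation*}
f(x+h)=f(x)+\scal{\nabla f(x)}{h}+\Frac{1}{2}\scal{L(h)}{h}+o(\norm{h}^2),
\end{equation*}
ce qui est le d\'eveloppement de Taylor d'ordre deux cherch\'e, avec $\nabla^2 f(x)=L$.

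Pour le sens (ii) $\Rightarrow$ (i), on suppose l'existence d'un d\'eveloppement de Taylor $f(x+h)-f(x)-\scal{\nabla f(x)}{h}=\frac{1}{2}\scal{L(h)}{h}+o(\norm{h}^2)$ avec $L\in\BL(\HH,\HH)$ sym\'etrique. Comme $\nabla f$ est une s\'election de $\partial f$ au voisinage de $x$, la convexit\'e donne les in\'egalit\'es de sous-gradient $\scal{\nabla f(x)}{h}\leq f(x+h)-f(x)\leq\scal{\nabla f(x+h)}{h}$, qui combin\'ees au d\'eveloppement appliqu\'e en $h$ puis en $2h$ permettent d'encadrer $\scal{\nabla f(x+h)-\nabla f(x)-L(h)}{h}$ par des quantit\'es en $o(\norm{h}^2)$. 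Une \'etape de polarisation, exploitant la monotonie de $\nabla f$, permet ensuite de passer de ce contr\^ole directionnel \`a $\norm{\nabla f(x+h)-\nabla f(x)-L(h)}=o(\norm{h})$, c'est-\`a-dire la Fr\'echet-diff\'erentiabilit\'e de $\nabla f$ en $x$ avec $\Diff(\nabla f)(x)=L$.

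Le principal obstacle se trouve dans cette seconde direction: sans l'hypoth\`ese $f\in\Gamma_0(\HH)$, un d\'eveloppement de Taylor d'ordre deux en un point ne suffit pas en g\'en\'eral \`a assurer la Fr\'echet-diff\'erentiabilit\'e du gradient en ce point. C'est pr\'ecis\'ement la monotonie de $\nabla f$, cons\'equence de la convexit\'e, qui permet de convertir un contr\^ole de la forme bilin\'eaire $\scal{\nabla f(x+h)-\nabla f(x)-L(h)}{h}$ en un contr\^ole en norme, \'etape qu'il conviendra de mener avec soin.
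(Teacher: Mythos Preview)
Le m\'emoire ne fournit pas de d\'emonstration pour ce lemme; il est \'enonc\'e sans preuve, vraisemblablement parce qu'avec l'interpr\'etation usuelle il est imm\'ediat. Le point de d\'esaccord entre ta proposition et cette lecture est l'interpr\'etation de (ii). Dire que $f$ est \emph{deux fois Fr\'echet-diff\'erentiable en $x$} signifie, au sens classique, que l'application $\Diff f\colon y\mapsto\scal{\nabla f(y)}{\cdot}$ est elle-m\^eme Fr\'echet-diff\'erentiable en $x$. Comme $\Diff f=R\circ\nabla f$ o\`u $R\colon\HH\to\HH^*$ est l'isom\'etrie de Riesz (donc lin\'eaire born\'ee d'inverse born\'e), $\Diff f$ est Fr\'echet-diff\'erentiable en $x$ si et seulement si $\nabla f$ l'est, et alors $\Diff^2 f(x)(h)(k)=\scal{\Diff(\nabla f)(x)h}{k}$, ce qui donne exactement $\nabla^2 f(x)=\Diff(\nabla f)(x)$. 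L'hypoth\`ese $f\in\Gamma_0(\HH)$ n'intervient pas; c'est un \'enonc\'e de calcul diff\'erentiel pur.

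Tu as choisi d'interpr\'eter (ii) comme l'existence d'un d\'eveloppement de Taylor d'ordre deux. C'est une notion strictement plus faible en g\'en\'eral, et ce n'est pas celle vis\'ee par le m\'emoire (la Proposition qui suit utilise le lemme pour appliquer la r\`egle de la cha\^ine \`a $\nabla f$, ce qui requiert la diff\'erentiabilit\'e de $\nabla f$, pas un simple d\'eveloppement). Sous ton interpr\'etation, (i)$\Rightarrow$(ii) est correct, mais ta preuve de (ii)$\Rightarrow$(i) reste \`a l'\'etat d'esquisse: l'\'etape cl\'e --- passer d'un contr\^ole de $\scal{\nabla f(x+h)-\nabla f(x)-Lh}{h}$ en $o(\norm{h}^2)$ \`a un contr\^ole de $\norm{\nabla f(x+h)-\nabla f(x)-Lh}$ en $o(\norm{h})$ --- n'est pas une simple polarisation. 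Elle demande de tester contre des directions $v$ de norme $t\norm{h}$ avec $t\to 0$ choisi en fonction du module du reste, ce que tu n'as pas fait. En r\'esum\'e: avec la d\'efinition standard le lemme est trivial et ta preuve est inutilement compliqu\'ee; avec ta d\'efinition, le sens (ii)$\Rightarrow$(i) n'est pas d\'emontr\'e.
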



\begin{proposition}
\label{diff proj}
Soit $f\in\Gamma_0(\HH)$ telle que $\lev{0}f\neq\emp$. 
On suppose $x\in\clev{0}f$. On suppose que $f$ est deux fois 
Fr\'echet-diff\'erentiable au voisinage de $x$. Alors 
\begin{equation}
\Diff G_f(x)=\Id-\Frac{\scal{\cdot}{\nabla f(x)}}
{\norm{\nabla f(x)}^2}\nabla f(x)-\Frac{f(x)}{\norm{\nabla f(x)}^2}
\nabla^2f(x)(\cdot)+\Frac{2f(x)\scal{\nabla f(x)}{\nabla^2f(x)
(\cdot)}}{\norm{\nabla f(x)}^4}\nabla f(x).
\end{equation}
\end{proposition}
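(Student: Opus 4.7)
La strat\'egie est d'abord de ramener $G_f$ \`a une expression explicite lisse au voisinage de $x$, puis d'appliquer la r\`egle de la cha\^ine et la r\`egle du produit de Fr\'echet.

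D'abord, comme $f$ est Fr\'echet-diff\'erentiable au voisinage de $x$, elle y est continue, et comme $f(x)>0$, il existe un voisinage $V$ de $x$ sur lequel $f>0$. Par le Lemme \ref{sous-gradient non nul}, on a $\nabla f(y)\neq 0$ pour tout $y\in V$. Par cons\'equent $G_f$ est univoque sur $V$ et co\"incide avec l'application Fr\'echet-diff\'erentiable
\begin{equation}
y\mapsto y-h(y),\qquad h(y)=f(y)\cdot\psi(\nabla f(y)),
\end{equation}
o\`u $\psi=\norm{\cdot}^{-2}\Id$ est d\'efinie sur $\HH\backslash\{0\}$. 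Il suffit donc d'\'etablir $\Diff G_f(x)=\Id-\Diff h(x)$ et de calculer $\Diff h(x)$.

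Ensuite, le Lemme \ref{diff inv} fournit
\begin{equation}
\Diff\psi(u)(\cdot)=\norm{u}^{-2}(\cdot)-2\norm{u}^{-4}\scal{u}{\cdot}u
\end{equation}
pour tout $u\neq0$. L'hypoth\`ese que $f$ est deux fois Fr\'echet-diff\'erentiable au voisinage de $x$ jointe au Lemme \ref{gradient ordre deux} donne que $\nabla f$ est Fr\'echet-diff\'erentiable en $x$ avec $\Diff(\nabla f)(x)=\nabla^2f(x)$. La r\`egle de la cha\^ine fournit alors
\begin{equation}
\Diff(\psi\circ\nabla f)(x)(\cdot)=\Frac{\nabla^2f(x)(\cdot)}{\norm{\nabla f(x)}^2}-\Frac{2\scal{\nabla f(x)}{\nabla^2f(x)(\cdot)}}{\norm{\nabla f(x)}^4}\nabla f(x).
\end{equation}

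Enfin, comme $h$ est le produit de la fonction scalaire $f$ et de la fonction vectorielle $\psi\circ\nabla f$, la r\`egle du produit pour les diff\'erentielles de Fr\'echet donne
\begin{equation}
\Diff h(x)(\cdot)=\scal{\cdot}{\nabla f(x)}\cdot\psi(\nabla f(x))+f(x)\cdot\Diff(\psi\circ\nabla f)(x)(\cdot).
\end{equation}
En substituant les expressions ci-dessus et en utilisant $\Diff G_f(x)=\Id-\Diff h(x)$ on obtient exactement la formule annonc\'ee, le signe $+$ du dernier terme r\'esultant de la combinaison des deux signes n\'egatifs. L'unique point demandant un peu d'attention est la tenue des signes et la bonne distribution de $\scal{\cdot}{\nabla f(x)}$ dans la r\`egle du produit ; le reste est un calcul routinier, aucune difficult\'e majeure n'\'etant \`a pr\'evoir.
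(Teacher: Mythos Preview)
Your proposal is correct and follows essentially the same route as the paper: write $G_f=\Id-f\cdot\Inv\circ\nabla f$ near $x$ (with $\Inv=\psi=\norm{\cdot}^{-2}\Id$), then apply the product rule together with the chain rule and Lemme~\ref{diff inv}. You are in fact slightly more careful than the paper in justifying that $G_f$ is univoque and smooth on a neighborhood of $x$ and in explicitly invoking Lemme~\ref{gradient ordre deux}.
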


\begin{proof}
Notons $\Inv=\norm{\cdot}^{-2}\Id$. Ainsi, sur $\clev{0}f\cap
\dom f$, $G_f=\Id-f\cdot\Inv\circ\nabla f$. On a $\Diff G_f(x)=
\Id-\scal{\cdot}{\nabla f(x)}(\Inv\circ\nabla f)(x)
-f(x)\cdot\Diff\Inv(\nabla f(x))\circ\Diff(\nabla f)(x)$ par 
diff\'erentiation d'une compos\'ee \dueto{X. Fonctions 
diff\'erentiables}{SR08}. Il ne reste alors plus qu'\`a appliquer 
le Lemme \ref{diff inv} :
\begin{equation}
\Diff G_f(x)=\Id-\Frac{\scal{\cdot}{\nabla f(x)}}
{\norm{\nabla f(x)}^2}\nabla f(x)-\Frac{f(x)}{\norm{\nabla f(x)}^2}
\nabla^2f(x)(\cdot)+\Frac{2f(x)\scal{\nabla f(x)}{\nabla^2f(x)
(\cdot)}}{\norm{\nabla f(x)}^4}\nabla f(x).
\end{equation}
\end{proof}

\begin{proposition}
\label{deriv proj}
Soit $f\in\Gamma_0(\RR)$. Si $f$ est deux fois diff\'erentiable 
alors $G_f$ est 
diff\'erentiable sur $\menge{x\in\RR}{f(x)\neq0}$ avec
\begin{equation}
(\forall x\in\complement\zer f)\quad G_f'x=
\begin{cases}
1 & \text{si }f(x)<0\\
\Frac{f''(x)f(x)}{f'(x)^2} & \text{si }f(x)>0.
\end{cases}
\end{equation}
\end{proposition}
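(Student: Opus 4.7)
La proposition est essentiellement la sp\'ecialisation de la Proposition \ref{diff proj} au cas $\HH=\RR$, assortie du cas $f(x)<0$ trait\'e s\'epar\'ement par continuit\'e. Je proc\'ederai en distinguant les deux signes de $f(x)$.

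\textbf{Cas $f(x)<0$.} Puisque $f$ est deux fois diff\'erentiable, $f$ est en particulier continue en $x$, donc il existe un voisinage ouvert $V$ de $x$ sur lequel $f<0$. Sur $V$ on a $G_f=\Id$ par d\'efinition du projecteur sous-diff\'erentiel, d'o\`u $G_f$ est diff\'erentiable en $x$ avec $G_f'(x)=1$.

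\textbf{Cas $f(x)>0$.} De m\^eme, par continuit\'e de $f$, il existe un voisinage $W$ de $x$ sur lequel $f>0$. Par le Lemme \ref{sous-gradient non nul}, $f'(y)\neq 0$ pour tout $y\in W$ (en r\'eduisant \'eventuellement $W$ par continuit\'e de $f'$ autour de $x$, puisque $f'(x)\neq0$). Sur $W$ on dispose donc de l'expression univoque
\begin{equation}
G_fy=y-\Frac{f(y)}{f'(y)}.
\end{equation}
Il reste \`a d\'eriver cette expression, ce qui est un calcul \'el\'ementaire (r\`egle du quotient) donnant
\begin{equation}
G_f'(x)=1-\Frac{f'(x)^2-f(x)f''(x)}{f'(x)^2}=\Frac{f(x)f''(x)}{f'(x)^2}.
\end{equation}
On peut aussi appliquer directement la Proposition \ref{diff proj} : dans $\HH=\RR$, $\nabla f(x)=f'(x)$, $\nabla^2 f(x)=f''(x)$, et la formule g\'en\'erale se r\'eduit apr\`es simplifications triviales \`a la m\^eme valeur, les termes $-1$ (issu du deuxi\`eme terme) et $+1$ (issu du premier) se compensant et les deux contributions en $f''$ se combinant en $-\tfrac{f(x)f''(x)}{f'(x)^2}+\tfrac{2f(x)f''(x)}{f'(x)^2}=\tfrac{f(x)f''(x)}{f'(x)^2}$.

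\textbf{Obstacle principal.} Il n'y en a gu\`ere : la seule v\'erification non imm\'ediate est la non-annulation de $f'$ au voisinage de $x$ lorsque $f(x)>0$, qui r\'esulte du Lemme \ref{sous-gradient non nul} appliqu\'e ponctuellement puis de la continuit\'e de $f'$ (cons\'equence de la deux fois diff\'erentiabilit\'e). L'\'enonc\'e ne couvre pas les points o\`u $f(x)=0$ : c'est pr\'ecis\'ement l\`a que la transition entre les deux branches de $G_f$ peut cr\'eer une non-diff\'erentiabilit\'e, ce qui justifie la restriction \`a $\complement\zer f$.
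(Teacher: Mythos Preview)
Your proof is correct and matches the paper's intent: the paper gives no separate proof of this proposition, placing it immediately after Proposition~\ref{diff proj} as its one-dimensional specialization, which is exactly how you treat the case $f(x)>0$ (either via that proposition or by the equivalent quotient-rule computation), with the case $f(x)<0$ being trivially $G_f=\Id$ on a neighborhood. One minor remark: your parenthetical about reducing $W$ by continuity of $f'$ is unnecessary, since Lemme~\ref{sous-gradient non nul} already yields $f'(y)\neq 0$ for every $y$ with $f(y)>0$, not just for $x$.
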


\begin{remark}
Il ne suffit pas que $f$ soit deux fois diff\'erentiable pour que 
$G_f$ soit diff\'erentiable sur $\menge{x\in\RR}{f(x)=0}$.
\end{remark}

\begin{proof}
Il suffit de consid\'erer la fonction $f:\RR\rightarrow\RR:
x\mapsto x$. Dans ce cas
\begin{equation}
G_f:\RR\rightarrow\RR:x\mapsto
\begin{cases}
x & \text{si }x\leq0\\
0 & \text{si }x>0.
\end{cases}
\end{equation}
n'est pas d\'erivable en $0$.
\end{proof}

\subsubsection{Caract\`ere lipschitzien}

\begin{lemma}
Soient $f\in\Gamma_0(\HH)$ telle que $\lev{0}f\neq\emp$, $U$ une 
s\'election de $\partial f$ et $\beta\in\RR_{++}$. Les assertions 
suivantes sont \'equivalentes.
\begin{enumerate}
\item $G^U_f$ est $max(1,\beta)$-lipschitzien.
\item $\left(\forall(x,y)\in\left(\clev{0}f\cap\dom U\right)^2
\right)~\norm{x-y-\Frac{f(x)}{\norm{Ux}^2}Ux+\Frac{f(y)}{
\norm{Uy}^2}Uy}\leq\beta\norm{x-y}$
\end{enumerate}
\end{lemma}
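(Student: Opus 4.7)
The statement amounts to a dictionary between a Lipschitz bound on all of $\dom G_f^U = \lev{0}f \cup \dom U$ and the same type of bound on the subset $\clev{0}f \cap \dom U$ where the nontrivial branch of $G_f^U$ is active. I plan to prove the equivalence by a straightforward case analysis on the positions of $x$ and $y$ relative to $\lev{0}f$, invoking the quasi-contraction property already established in Proposition \ref{quasi-contractance} to handle the fixed-point set.

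For (ii) $\Rightarrow$ (i), I would partition $(\dom G_f^U)^2$ into three subregions. If $x, y \in \lev{0}f$, both points are fixed by $G_f^U$ (Remarque \ref{premieres proprietes}), so $\norm{G_f^Ux - G_f^Uy} = \norm{x-y} \leq \max(1,\beta)\norm{x-y}$. If, say, $x \in \clev{0}f \cap \dom U$ and $y \in \lev{0}f$, then $G_f^Uy = y$ and Proposition \ref{quasi-contractance} gives $\norm{G_f^Ux - y} \leq \norm{x-y}$, again within the required bound. Finally, if both $x, y$ belong to $\clev{0}f \cap \dom U$, the bound is exactly the content of (ii), since $\beta \leq \max(1,\beta)$. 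The converse (i) $\Rightarrow$ (ii) is immediate: on $(\clev{0}f \cap \dom U)^2$, the definition of $G_f^U$ yields $G_f^Ux - G_f^Uy = x - y - (f(x)/\norm{Ux}^2)Ux + (f(y)/\norm{Uy}^2)Uy$, so the Lipschitz bound of (i) translates directly into the estimate of (ii).

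The proof is essentially bookkeeping. The only substantive ingredient is Proposition \ref{quasi-contractance}, whose role is precisely to justify the $\max$ appearing in $\max(1,\beta)$: since $G_f^U$ fixes $\lev{0}f$ pointwise, no global Lipschitz constant below $1$ is possible, regardless of $\beta$. The only mildly delicate point I foresee is the mixed case in the direction (ii) $\Rightarrow$ (i), where assumption (ii) is not directly applicable because $G_f^U y = y$ by definition, and one has to appeal to the quasi-contraction estimate to fill the gap.
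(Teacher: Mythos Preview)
Your approach to (ii) $\Rightarrow$ (i) matches the paper's: a three-case split according to whether $x,y$ lie in $\lev{0}f$ or in $\clev{0}f\cap\dom U$. The only cosmetic difference is that for the mixed case you invoke Proposition~\ref{quasi-contractance} as a black box, whereas the paper redoes an equivalent computation directly via the subgradient inequality to obtain $\norm{G_f^Ux-G_f^Uy}^2\leq\norm{x-y}^2$.

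There is, however, a gap in your direction (i) $\Rightarrow$ (ii) when $\beta<1$. Assumption (i) only yields $\norm{G_f^Ux-G_f^Uy}\leq\max(1,\beta)\,\norm{x-y}$, so restricting to $(\clev{0}f\cap\dom U)^2$ gives (ii) with constant $\max(1,\beta)$ rather than $\beta$; these coincide only if $\beta\geq1$. In fact the equivalence as stated fails for $\beta<1$: take $\HH=\RR^2$ and $f=d_C$ for a closed half-space $C$, so that $G_f=P_C$ is $1$-Lipschitz and (i) holds for any $\beta\in\left]0,1\right[$, yet on $\complement C$ the projection preserves distances in the direction parallel to $\bdry C$, so (ii) fails for every $\beta<1$. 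The paper's own proof leaves this direction entirely implicit, so the defect lies in the statement rather than in your strategy; your argument is complete for the regime $\beta\geq1$, which is the only one in which the lemma actually holds.
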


\begin{proof}
Soit $(x,y)\in\left(\dom G^U_f\right)^2$. Si $f(x)\leq0$ et $f(y)
\leq0$ alors $\norm{G^U_fx-G^U_fy}=\norm{x-y}$. L'\'eventuelle 
constante de Lipschitz de $G^U_f$ est donc sup\'erieure ou \'egale 
\`a $1$. Supposons maintenant que $f(x)>0$ et $f(y)\leq0$. Alors
\begin{align}
\norm{G^U_fx-G^U_fy}^2-\norm{x-y}^2
&=\Frac{f(x)}{\norm{Ux}^2}
\left(f(x)+2\scal{y-x}{Ux}\right)\nonumber\\
&\leq\Frac{f(x)(2f(y)-f(x))}
{\norm{Ux}^2}\nonumber\\
&\leq0
\end{align}
On en d\'eduit imm\'ediatement le r\'esultat.
\end{proof}

\begin{proposition}
\dueto{V.5.1. Th\'eor\`eme}{SR08}
\label{jsr}
Soient $\Omega$ un ouvert convexe de $\HH$, $G:\Omega
\rightarrow\HH$ contin\^ument Fr\'echet-diff\'erentiable 
sur $\Omega$ et $\beta\in\RR_{++}$. Si $(\forall x\in\Omega)~
\norm{\Diff Gx}\leq\beta$ alors
\begin{equation}
\left(\forall(x,y)\in\Omega^2\right)~\norm{Gx-Gy}\leq\beta\norm{x-y}.
\end{equation}
\end{proposition}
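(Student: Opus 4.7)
L'id\'ee est de se ramener au th\'eor\`eme des accroissements finis scalaire gr\^ace au produit scalaire hilbertien, \'evitant ainsi toute int\'egration vectorielle. Fixons $(x,y)\in\Omega^2$. Par convexit\'e de $\Omega$, le segment $\menge{x+t(y-x)}{t\in[0,1]}$ est inclus dans $\Omega$. Si $Gx=Gy$ l'in\'egalit\'e est triviale : je supposerais donc $z:=Gy-Gx\neq0$.

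Je d\'efinirais alors $\psi:[0,1]\rightarrow\RR:t\mapsto\scal{G(x+t(y-x))}{z}$. Comme $\psi$ est la compos\'ee de l'application affine $t\mapsto x+t(y-x)$, de $G$ (de classe $C^1$ sur $\Omega$ par hypoth\`ese), et de la forme lin\'eaire continue $\scal{\cdot}{z}$, elle est de classe $C^1$ sur $[0,1]$ avec, par la r\`egle de d\'erivation d'une compos\'ee,
\begin{equation}
(\forall t\in[0,1])\quad\psi'(t)=\scal{\Diff G(x+t(y-x))(y-x)}{z}.
\end{equation}

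Le th\'eor\`eme des accroissements finis classique appliqu\'e \`a $\psi$ fournit $c\in\,]0,1[$ tel que $\psi(1)-\psi(0)=\psi'(c)$, c'est-\`a-dire
\begin{equation}
\norm{z}^2=\scal{Gy-Gx}{z}=\scal{\Diff G(x+c(y-x))(y-x)}{z}.
\end{equation}
Il ne reste qu'\`a combiner l'in\'egalit\'e de Cauchy-Schwarz, la d\'efinition de la norme d'op\'erateur et l'hypoth\`ese $\norm{\Diff G}\leq\beta$ :
\begin{equation}
\norm{z}^2\leq\norm{\Diff G(x+c(y-x))(y-x)}\cdot\norm{z}\leq\beta\norm{y-x}\cdot\norm{z},
\end{equation}
puis \`a diviser par $\norm{z}>0$.

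Le seul point potentiellement d\'elicat est la gestion du caract\`ere vectoriel de $G$ : un TAF direct n'existe pas pour les fonctions \`a valeurs dans un Hilbert (on n'a en g\'en\'eral pas $Gy-Gx=\Diff G(\xi)(y-x)$). Le passage par $\scal{\cdot}{z}$ est l'astuce standard qui contourne cette difficult\'e en r\'eduisant au cas scalaire ; le choix de $z=Gy-Gx$ comme vecteur test est ce qui permet d'obtenir $\norm{z}^2$ au membre de gauche et donc la Lipschitzianit\'e apr\`es simplification.
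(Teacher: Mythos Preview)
Your proof is correct: the reduction to the scalar mean value theorem via the linear functional $\scal{\cdot}{Gy-Gx}$ is the standard device, and every step is justified. Note that the paper itself does not give a proof of this proposition --- it simply cites \cite[V.5.1. Th\'eor\`eme]{SR08} --- so there is no in-paper argument to compare your approach with; the argument you give is essentially the classical one found in the cited reference.
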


\begin{proposition}
Soient $\Omega$ un ouvert convexe inclus dans $\clev{0}f\cap
\dom f$ et $\beta\in\RR_{++}$. On suppose $f$ deux fois 
Fr\'echet-diff\'erentiable sur $\Omega$. Si 
$\sup|f|(\Omega)<+\infty$, $\nabla f~\beta$-lipschitzien sur 
$\Omega$ et $\inf\norm{\nabla f(x)}(\Omega)>0$ alors $G_f$ est 
$\beta'$-lipschitzien sur $\lev{0}f\cup\Omega$ avec $\beta'=2+3
\Frac{\sup|f|(\Omega)}{\inf\norm{\nabla f(x)}^2(\Omega)}\beta$.
\end{proposition}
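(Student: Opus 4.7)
L'id\'ee est de traiter s\'epar\'ement les paires de points selon qu'elles vivent dans $\lev{0}f$ ou dans $\Omega$, puis de combiner les estimations. On va invoquer deux outils~: la quasi-contractance de $G_f$ (Proposition \ref{quasi-contractance}) pour g\'erer les points fixes, et la Proposition \ref{jsr} combin\'ee \`a la formule explicite de $\Diff G_f$ (Proposition \ref{diff proj}) pour majorer la constante de Lipschitz sur l'ouvert $\Omega$.

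Premi\`erement, sur $\Omega$, les hypoth\`eses (diff\'erentiabilit\'e d'ordre deux, $\inf\norm{\nabla f(\cdot)}(\Omega)>0$) garantissent que $G_f$ est univoque, Fr\'echet-diff\'erentiable sur $\Omega$, et que sa diff\'erentielle y est continue. La Proposition \ref{diff proj} donne
\begin{equation}
\Diff G_f(x)=\Id-\Frac{\scal{\cdot}{\nabla f(x)}}{\norm{\nabla f(x)}^2}\nabla f(x)-\Frac{f(x)}{\norm{\nabla f(x)}^2}\nabla^2f(x)(\cdot)+\Frac{2f(x)\scal{\nabla f(x)}{\nabla^2f(x)(\cdot)}}{\norm{\nabla f(x)}^4}\nabla f(x).
\end{equation}
On majore la norme op\'eratorielle terme \`a terme. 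L'identit\'e est de norme $1$; le second terme, projection de rang un sur la droite engendr\'ee par $\nabla f(x)$, est aussi de norme $1$. Comme $\nabla f$ est $\beta$-lipschitzien sur $\Omega$ et que $f$ est deux fois diff\'erentiable, on a $\norm{\nabla^2 f(x)}\leq\beta$, ce qui donne respectivement $|f(x)|\beta/\norm{\nabla f(x)}^2$ et $2|f(x)|\beta/\norm{\nabla f(x)}^2$ pour les deux derniers termes. En sommant puis en prenant le supremum sur $\Omega$, on obtient $\norm{\Diff G_f(x)}\leq 2+3|f(x)|\beta/\norm{\nabla f(x)}^2\leq\beta'$. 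La Proposition \ref{jsr} appliqu\'ee \`a l'ouvert convexe $\Omega$ fournit alors
\begin{equation}
\left(\forall(x,y)\in\Omega^2\right)\quad\norm{G_fx-G_fy}\leq\beta'\norm{x-y}.
\end{equation}

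Deuxi\`emement, si $x\in\lev{0}f$ et $y\in\lev{0}f\cup\Omega$, on exploite le fait que $x$ est un point fixe de $G_f$ (Remarque \ref{projections}). La Proposition \ref{quasi-contractance} appliqu\'ee \`a $y\in\dom G_f$ et au point $x\in\lev{0}f$ donne
\begin{equation}
\norm{G_fy-G_fx}=\norm{G_fy-x}\leq\norm{y-x}\leq\beta'\norm{y-x},
\end{equation}
puisque $\beta'\geq2\geq1$. La sym\'etrie et les deux cas pr\'ec\'edents couvrent toutes les paires $(x,y)\in(\lev{0}f\cup\Omega)^2$, ce qui ach\`eve la preuve.

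Le point d\'elicat est la majoration soigneuse de $\norm{\Diff G_f(x)}$ par les quatre contributions, en particulier le contr\^ole du dernier terme (de rang un mais faisant appara\^itre un facteur $\norm{\nabla f(x)}$ suppl\'ementaire qu'il faut absorber dans le d\'enominateur d'ordre quatre). Le passage \`a la Lipschitz sur l'union $\lev{0}f\cup\Omega$ est en revanche imm\'ediat gr\^ace \`a la quasi-contractance, qui donne m\^eme une constante $1$ d\`es qu'un des deux points est dans $\lev{0}f$.
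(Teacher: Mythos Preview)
Your argument is correct and follows the same route as the paper: bound $\norm{\Diff G_f(x)}$ term by term using Proposition~\ref{diff proj} and then apply Proposition~\ref{jsr} on $\Omega$. Your explicit treatment of the mixed cases (one point in $\lev{0}f$, the other in $\Omega$) via quasi-contractance is in fact more detailed than the paper's proof, which is terse on this point and leans implicitly on the lemma preceding the proposition.
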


\begin{proof}
Gr\^ace \`a la Proposition \ref{diff proj} on trouve
\begin{equation}
\norm{\Diff G_f(x)}\leq2+3\Frac{|f(x)|\norm{\nabla^2 f(x)}}
{\norm{\nabla f(x)}^2}.
\end{equation}
Le r\'esultat est alors une cons\'equence directe de la 
Proposition \ref{jsr}.
\end{proof}

\begin{lemma}
\label{derivee bornee}
Soient $f:\RR\rightarrow\RR$ et $\beta\in\RR_{++}$. On suppose $f$ 
diff\'erentiable. Les assertions suivantes sont \'equivalentes.
\begin{enumerate}
\item
$f$ est $\beta$-lipschitzienne.
\item
$|f'|\leq\beta$
\end{enumerate}
\end{lemma}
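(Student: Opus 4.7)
The plan is to prove the two implications separately, both of which are classical consequences of elementary one-variable calculus, so I will be brief.

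For the implication (i)$\Rightarrow$(ii), I would fix $x\in\RR$ and, for any $h\in\RR\setminus\{0\}$, use the Lipschitz hypothesis to write $|f(x+h)-f(x)|/|h|\leq\beta$. Since $f$ is differentiable at $x$, letting $h\to 0$ yields $|f'(x)|\leq\beta$. As $x$ was arbitrary this gives $|f'|\leq\beta$ on $\RR$.

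For the converse (ii)$\Rightarrow$(i), I would appeal to the mean value theorem (a result clearly available in the reference \cite{SR08} already cited in the text for Proposition \ref{jsr}). Given $x,y\in\RR$ with $x\neq y$, there exists $c$ strictly between them such that $f(y)-f(x)=f'(c)(y-x)$, hence $|f(y)-f(x)|=|f'(c)|\,|y-x|\leq\beta|y-x|$. The case $x=y$ is trivial, so $f$ is $\beta$-lipschitzienne.

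There is no real obstacle here: the result is essentially the scalar analogue of Proposition \ref{jsr}, and in fact could be obtained as a corollary of it (since any open interval of $\RR$ is convex and $\Diff f(x)$ reduces to $f'(x)$ with operator norm $|f'(x)|$). The only subtlety worth noting is that the converse direction requires the mean value theorem, which is a genuinely one-dimensional tool and the reason the statement is phrased specifically for $f:\RR\rightarrow\RR$ rather than for a general Hilbert setting.
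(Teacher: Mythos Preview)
Your proof is correct and matches the paper's approach: the paper's proof is the single sentence ``Il s'agit d'une cons\'equence directe du th\'eor\`eme des accroissements finis \dueto{Th\'eor\`eme 4.1.}{SR08}'', and you have simply unpacked that reference, using the difference quotient for (i)$\Rightarrow$(ii) and the mean value theorem for (ii)$\Rightarrow$(i).
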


\begin{proof}
Il s'agit d'une cons\'equence directe du th\'eor\`eme des 
accroissements finis \dueto{Th\'eor\`eme 4.1.}{SR08}.
\end{proof}

\begin{proposition}
Soient $f:\RR\rightarrow\RR$ et $\beta\in\RR_{++}$. 
On suppose $f$ deux fois diff\'erentiable. 
Si $\sup f(\clev{0}f)<+\infty$, 
$f'~\beta$-lispchitzienne et $\inf|f'|(\clev{0}f)>0$ 
alors $G_f$ est $\beta'$-lipschitzienne, o\`u 
$\beta'=\max\left\{1,\Frac{\sup f(\clev{0}f)}{\inf f'^2
(\clev{0}f)}\beta\right\}$.
\end{proposition}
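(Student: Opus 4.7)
La strat\'egie naturelle consiste \`a majorer $|G_f'|$ par $\beta'$ sur l'ouvert $\RR\setminus\zer f$ au moyen de la Proposition \ref{deriv proj}, puis \`a \'etendre la $\beta'$-lipschitzianit\'e \`a tout $\RR$ en traversant l'ensemble $\zer f$.

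Dans un premier temps, la Proposition \ref{deriv proj} fournit $G_f'(x)=\Frac{f(x)f''(x)}{f'(x)^2}$ sur $\clev{0}f$. Le Lemme \ref{derivee bornee} appliqu\'e \`a $f'$ (dont la constante de Lipschitz est $\beta$) donne $|f''|\leq\beta$, d'o\`u $|G_f'|\leq\alpha$ sur $\clev{0}f$, avec $\alpha=\Frac{\sup f(\clev{0}f)}{\inf f'^2(\clev{0}f)}\beta$. Sur l'ouvert $\{f<0\}$, on a $G_f=\Id$ donc $G_f'=1$. Ainsi $|G_f'|\leq\beta'=\max(1,\alpha)$ sur $\RR\setminus\zer f$.

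L'\'etape finale consiste \`a traiter les points $x_0\in\zer f$. Si $x_0$ appartient \`a l'int\'erieur de $\lev{0}f$, alors $G_f=\Id$ dans un voisinage et rien n'est \`a v\'erifier. Sinon $x_0$ est adh\'erent \`a $\clev{0}f$, et la condition $\inf|f'|(\clev{0}f)>0$ combin\'ee \`a la continuit\'e de $f'$ force $f'(x_0)\neq 0$: ceci garantit \`a la fois la continuit\'e de $G_f$ en $x_0$ (car $G_f(x)=x-f(x)/f'(x)\to x_0=G_f(x_0)$ lorsque $x\to x_0$ avec $f(x)>0$) et le d\'eveloppement de Taylor $f(x_0+h)=f'(x_0)h+O(h^2)$. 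On en d\'eduira que le taux d'accroissement $[G_f(x_0+h)-G_f(x_0)]/h$ vaut $1$ lorsque $f(x_0+h)\leq 0$, et est un $O(h)$ lorsque $f(x_0+h)>0$; les d\'eriv\'ees de Dini de $G_f$ en $x_0$ sont donc contenues dans $[0,1]\subset[-\beta',\beta']$. Une version du th\'eor\`eme des accroissements finis pour fonctions continues \`a d\'eriv\'ees de Dini born\'ees permet alors de conclure. Le point \emph{d\'elicat} est ce contr\^ole aux points de $\zer f\cap\overline{\clev{0}f}$, o\`u $G_f$ peut ne pas \^etre diff\'erentiable: c'est l'hypoth\`ese $\inf|f'|(\clev{0}f)>0$ qui, en assurant $f'(x_0)\neq 0$ au bord, rend l'analyse de Taylor possible et autorise le recollement \`a travers $\zer f$.
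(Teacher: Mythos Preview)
Your argument is correct and follows the paper's strategy: bound $|G_f'|$ on $\RR\setminus\zer f$ via Proposition~\ref{deriv proj} and Lemme~\ref{derivee bornee}, then deduce the Lipschitz constant. The paper's proof is a one-line invocation of those two results and tacitly skips the points of $\zer f$, where Lemme~\ref{derivee bornee} does not literally apply since $G_f$ need not be differentiable there; your boundary analysis---using $\inf|f'|(\clev{0}f)>0$ to force $f'(x_0)\neq 0$, establishing continuity of $G_f$ at $x_0$, and controlling the Dini derivatives---rigorously closes that gap. Note incidentally that since $f$ is convex, $\lev{0}f$ is an interval with at most two finite endpoints, so a direct piecewise argument (classical mean value theorem on each of the three subintervals, glued by continuity at the endpoints) would also suffice without Dini derivatives.
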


\begin{proof}
Il suffit de combiner la Proposition \ref{deriv proj} et le Lemme 
\ref{derivee bornee}.
\end{proof}

\begin{remark}
Un projecteur sous-diff\'erentiel fermement contractant n'est pas 
n\'ecessairement un projecteur classique. En effet, si 
$C\subset\HH$ est un convexe ferm\'e non vide distinct de $\HH$, 
alors $G_{d_C^2}$ est une contraction ferme mais n'est pas un 
projecteur classique.
\end{remark}

\begin{proof}
D'apr\`es le Corollaire \ref{proj puissance distance}
\begin{equation}
G_{d_C^2}=\Frac{\Id+P_C}{2}
\end{equation}
$G_{d_C^2}$ est donc une contraction ferme puisque $P_C$ est 
contractant. Supposons par l'absurde que $G_{d_C^2}$ est un 
projecteur. Comme $\Fix G_{d_C^2}=\Fix P_C=C$, on en d\'eduit 
$G_{d_C^2}=P_C$, d'o\`u $P_C=\Id$, ce qui contredit l'hypoth\`ese 
$C\neq\HH$.
\end{proof}

\section{Propri\'et\'es s\'equentielles}

Voyons maintenant comment se comporte la suite des projecteurs 
sous-diff\'rentiels associ\'ee \`a une suite de fonctions de 
$\Gamma_0(\HH)$.

\subsection{Rappels sur les suites d'ensembles et notations}

\begin{notation}
Soit $(S_n)_{n\in\NN}$ une suite de parties de $\HH$.
\begin{equation}
\linf S_n=\displaystyle{\bigcup_{n\in\NN}\bigcap_{k\geq n}S_k}
\end{equation}
\begin{equation}
\lsup S_n=\displaystyle{\bigcap_{n\in\NN}\bigcup_{k\geq n}S_k}
\end{equation}
\end{notation}

\begin{definition}
\dueto{Definition 1.31}{Att84}
Soit $(S_n)_{n\in\NN}$ une suite de parties de $\HH$.
\begin{enumerate}
\item $\Li S_n=\displaystyle{\cl{\bigcup_{n\in\NN}\bigcap_{k\geq n}
\cl{S_k}}}$
\item $\Ls S_n=\displaystyle{\bigcap_{n\in\NN}\cl{\bigcup_{k\geq n}
S_k}}$
\end{enumerate}
\end{definition}

\begin{proposition}
\dueto{Proposition 1.33, Proposition 1.34}{Att84}
Soient $(S_n)_{n\in\NN}$ une suite de parties de $\HH$ et $x\in\HH$.
\begin{enumerate}
\item $x\in\Li S_n$ si et seulement si il existe une suite 
$(x_n)_{n\in\NN}$ de $\HH$ telle que $(\forall n\in\NN)~x_n\in S_n$ 
et $x_n\rightarrow x$.
\item$x\in\Ls S_n$ si et seulement si il existe une sous-suite 
$(S_{n_k})_{k\in\NN}$ et une suite $(x_k)_{k\in\NN}$ de $\HH$ 
telles que $(\forall k\in\NN)~x_k\in S_{n_k}$ et $x_k\rightarrow x$.
\item $\Li S_n\subset\Ls S_n$
\end{enumerate}
\end{proposition}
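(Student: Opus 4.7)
The three items are the sequential characterizations of the Painlevé--Kuratowski liminf and limsup of a sequence of subsets, and I plan to establish (i) and (ii) by direct constructions, then deduce (iii).

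For (i), the direct implication starts by unpacking the closure in the definition of $\Li S_n$: given $x \in \overline{\bigcup_n \bigcap_{k\geq n}\overline{S_k}}$, I extract a sequence $(y_p)_{p\in\NN}$ with $y_p \to x$ together with indices $N_p$, which by passing to subsequences I may assume strictly increasing to infinity and satisfying $\|y_p - x\| \leq 1/p$, such that $y_p \in \overline{S_k}$ for every $k \geq N_p$. Then, for each $k \in [N_p, N_{p+1})$, the membership $y_p \in \overline{S_k}$ allows me to select $x_k \in S_k$ with $\|x_k - y_p\| \leq 1/p$; the triangle inequality gives $\|x_k - x\| \leq 2/p$, so $x_k \to x$. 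For $k < N_1$ I simply take any $x_k \in S_k$. The converse produces, from $x_n \in S_n$ with $x_n \to x$, witnesses of membership of $x$ in $\overline{\bigcup_n \bigcap_{k\geq n}\overline{S_k}}$ via approximating points lying simultaneously close to $x$ and in $\overline{S_k}$ for all large $k$.

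For (ii), the approach is analogous with a diagonal twist. The hypothesis $x \in \Ls S_n$ means $x \in \overline{\bigcup_{k\geq n} S_k}$ for every $n$, i.e.~for every $n \in \NN$ and every $\varepsilon > 0$ the ball $B(x,\varepsilon)$ meets $\bigcup_{k\geq n} S_k$. Taking $\varepsilon = 1/m$ and $n = n_{m-1}+1$, I inductively build an increasing sequence $(n_m)$ together with points $x_m \in S_{n_m} \cap B(x,1/m)$, so that $x_m \to x$. The converse is straightforward: if a subsequence $(x_k)$ with $x_k \in S_{n_k}$ and $n_k \nearrow \infty$ converges to $x$, then for every $n$, $x_k$ lies in $B(x,\varepsilon) \cap \bigcup_{j \geq n} S_j$ for $k$ large enough, placing $x$ in $\overline{\bigcup_{j\geq n} S_j}$.

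Item (iii) follows most economically from (i) and (ii): any sequence $(x_n)$ witnessing $x \in \Li S_n$ also witnesses $x \in \Ls S_n$ by taking the trivial subsequence $n_k = k$. Alternatively one can work directly with the set formulas, noting that for every $m$, $\bigcup_n \bigcap_{k\geq n}\overline{S_k} \subset \overline{\bigcup_{k\geq m} S_k}$ (split according to whether $n \geq m$ or $n < m$), then taking closure and intersecting over $m$. The delicate step is the converse of (i): the membership $x_n \in S_n$ yields $x_n \in \overline{S_n}$ only, not $x_n$ in a common $\overline{S_k}$ for all $k$ large, so the construction of approximants of $x$ inside the iterated intersection requires careful use of the closure in the definition of $\Li S_n$.
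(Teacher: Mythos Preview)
The paper does not supply a proof here: the proposition is quoted from Attouch's monograph as background material, with no argument given.

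On the substance of your proposal: items (ii) and (iii) are handled correctly and in the standard way. However, the converse of (i)---which you rightly flag as the ``delicate step''---cannot be completed under the formula for $\Li S_n$ recorded in the paper. Take $\HH=\RR$ and $S_n=\{1/n\}$. Then $x_n=1/n\in S_n$ and $x_n\to 0$, so the sequential condition in (i) holds at $x=0$; yet $\bigcap_{k\geq n}\overline{S_k}=\bigcap_{k\geq n}\{1/k\}=\emp$ for every $n$, whence $\overline{\bigcup_n\bigcap_{k\geq n}\overline{S_k}}=\emp$ and $0\notin\Li S_n$ by the paper's definition. No ``careful use of the closure'' can manufacture an approximant inside an empty set.

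The root cause is that the set-theoretic formula written in the paper for $\Li S_n$ is \emph{not} the Painlev\'e--Kuratowski lower limit; it is only a (generally proper) subset of it. The genuine lower limit is $\{x\in\HH:d(x,S_n)\to 0\}$, which coincides exactly with the sequential description in (i). So the failure of your argument for the converse of (i) is not a flaw in your strategy but a mismatch between the definition transcribed here and the object that Attouch's Propositions~1.33--1.34 actually characterize.
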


\begin{remark}
Soit $(S_n)_{n\in\NN}$ une suite de parties de $\HH$.
\begin{enumerate}
\item $\linf S_n\subset\Li S_n$
\item $\lsup S_n\subset\Ls S_n$
\end{enumerate}
\end{remark}

\subsection{Convergence forte}

\subsubsection{R\'esultats pr\'eliminaires li\'es \`a 
l'\'epi-convergence}

\begin{theorem}
\dueto{Theorem 3.66}{Att84}
Soient $(f_n)_{n\in\NN}$ une suite de fonctions de 
$\Gamma_0(\HH)$ 
telles que $(\forall n\in\NN)~\lev{0}f_n\neq\emp$ 
et $f\in\Gamma_0(\HH)$ telle que $\lev{0}f\neq\emp$. 
Les assertions suivantes sont \'equivalentes.
\begin{enumerate}
\item $\epi f_n\pk\epi f$
\item $\gra\partial f_n\pk\gra\partial f$ et
\begin{equation}
\left(\exi(x,u)\in
\gra\partial f\right)~\left(\exi((x_n,u_n))_{n\in\NN}\right)\in
\left(\HH^2\right)^\NN
\begin{cases}
(\forall n\in\NN)~(x_n,u_n)\in\gra\partial f_n\\
(x_n,u_n)\rightarrow(x,u)\\
f_n(x_n)\rightarrow f(x)
\end{cases}
\end{equation}
\end{enumerate}
\end{theorem}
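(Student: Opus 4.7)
L'outil central est la régularisation de Moreau-Yosida et l'équivalence classique : pour une suite de $\Gamma_0(\HH)$, $\epi f_n \pk \epi f$ si et seulement si, pour tout $\gamma \in \RR_{++}$, $\prox_{\gamma f_n}(z) \to \prox_{\gamma f}(z)$ fortement et $\yosi{f_n}{\gamma}(z) \to \yosi{f}{\gamma}(z)$ en tout $z \in \HH$. Je traiterais séparément les deux implications, en fixant dans chaque cas $\gamma = 1$.

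\emph{Implication (i) $\Rightarrow$ (ii).} La caractérisation résolvante $(x, u) \in \gra\partial g \Leftrightarrow x = \prox_g(x+u)$ fournit immédiatement la convergence des graphes : pour $(x, u) \in \gra\partial f$, on pose $x_n = \prox_{f_n}(x + u)$ et $u_n = x + u - x_n$ ; alors $(x_n, u_n) \in \gra\partial f_n$ et $(x_n, u_n) \to (x, u)$, d'où $\gra\partial f \subset \Li \gra\partial f_n$ ; l'inclusion inverse $\Ls \gra\partial f_n \subset \gra\partial f$ découle par passage à la limite dans cette même caractérisation, exploitant la continuité des $\prox_{f_n}$ (contractions fermes) et la convergence simple forte de $\prox_{f_n}$ vers $\prox_f$. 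La condition supplémentaire sur les valeurs s'obtient par l'identité $\yosi{f_n}{1}(x+u) = f_n(x_n) + \tfrac{1}{2}\|x + u - x_n\|^2$, dont le second membre converge vers $\yosi{f}{1}(x+u) = f(x) + \tfrac{1}{2}\|u\|^2$.

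\emph{Implication (ii) $\Rightarrow$ (i).} Partant du point d'ancrage $(a, b)$ avec suite approchante $(a_n, b_n) \in \gra\partial f_n$ satisfaisant $f_n(a_n) \to f(a)$, je commencerais par propager la convergence des valeurs : pour tout $(y, v) \in \gra\partial f$ et toute suite approchante $(y_n, v_n) \in \gra\partial f_n$, montrer que $f_n(y_n) \to f(y)$. Pour cela, je subdiviserais le segment $[a, y]$ en points $z^k = (1-t_k)a + t_k y$, choisirais des sous-gradients $w^k \in \partial f(z^k)$ (extraits via la Proposition~\ref{Frechet et selection} et la densité de $\dom\partial f$ dans $\dom f$), et approcherais chaque couple par $(z_n^k, w_n^k) \in \gra\partial f_n$ grâce à la convergence des graphes. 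Les inégalités sous-différentielles
\[
\scal{z_n^{k+1} - z_n^k}{w_n^k} \leq f_n(z_n^{k+1}) - f_n(z_n^k) \leq \scal{z_n^{k+1} - z_n^k}{w_n^{k+1}},
\]
sommées sur $k$, passées à la limite à subdivision fixée puis raffinées, encadrent $\liminf f_n(y_n)$ et $\limsup f_n(y_n)$ par $f(y)$.

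Pour conclure, je fixerais $z \in \HH$ et étudierais $x_n := \prox_{f_n}(z)$. L'inégalité $f_n(x_n) + \tfrac{1}{2}\|x_n - z\|^2 \leq f_n(a_n) + \tfrac{1}{2}\|a_n - z\|^2$, combinée à la minoration $f_n(x_n) \geq f_n(a_n) + \scal{x_n - a_n}{b_n}$ et au caractère borné de $(a_n)$, $(b_n)$, $(f_n(a_n))$, force $(x_n)$ à être bornée ; tout point d'adhérence faible $\bar{x}$ vérifie $z - \bar{x} \in \partial f(\bar{x})$ par passage à la limite dans $z - x_n \in \partial f_n(x_n)$ via $\Ls\gra\partial f_n \subset \gra\partial f$, donc $\bar{x} = \prox_f(z)$ par unicité. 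La convergence forte $x_n \to \prox_f(z)$ s'obtient alors via l'inégalité $\limsup \|x_n - z\|^2 \leq \|\prox_f(z) - z\|^2$ issue de l'inégalité variationnelle. La propagation des valeurs donne $f_n(x_n) \to f(\prox_f z)$, puis $\yosi{f_n}{1}(z) \to \yosi{f}{1}(z)$, et l'équivalence initiale conclut à $\epi f_n \pk \epi f$.

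\emph{Difficulté principale.} Le c\oe ur technique est la propagation de la convergence des valeurs dans l'implication (ii) $\Rightarrow$ (i) : une fonction convexe n'est déterminée par son sous-différentiel qu'à une constante additive près, et la condition de normalisation de (ii) fournit précisément cette constante. Son transport à tout $\gra\partial f$ requiert l'astuce de subdivision, la sélection cohérente de sous-gradients intermédiaires, et un double passage à la limite (d'abord $n \to \infty$, puis raffinement de la subdivision) dont la justification rigoureuse est la partie la plus délicate.
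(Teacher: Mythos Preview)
Le m\'emoire ne d\'emontre pas ce th\'eor\`eme : il est \'enonc\'e avec la r\'ef\'erence \dueto{Theorem 3.66}{Att84} et utilis\'e tel quel comme r\'esultat pr\'eliminaire pour les lemmes~\ref{epi1} et~\ref{epi2}. Il n'y a donc pas de preuve interne \`a laquelle comparer ta proposition.

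Sur le fond, ton plan suit effectivement la strat\'egie d'Attouch (caract\'erisation r\'esolvante de la convergence des graphes, puis ``int\'egration'' le long d'un segment pour propager la normalisation des valeurs). Quelques remarques n\'eanmoins. D'abord, la r\'ef\'erence \`a la Proposition~\ref{Frechet et selection} pour extraire des sous-gradients aux points interm\'ediaires $z^k$ est hors sujet : cette proposition concerne la continuit\'e des s\'elections, pas l'existence de sous-gradients ; il faut plut\^ot invoquer la densit\'e de $\dom\partial f$ dans $\dom f$ et le fait que $[a,y]\subset\dom f$ par convexit\'e. Ensuite, ton argument pour la convergence \emph{forte} de $x_n=\prox_{f_n}(z)$ vers $\prox_f(z)$ est flou : l'in\'egalit\'e variationnelle seule ne donne pas directement $\limsup\|x_n-z\|^2\leq\|\prox_f(z)-z\|^2$ sans contr\^ole pr\'ealable des valeurs $f_n$ en un point fixe. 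Le chemin propre est d'utiliser que $(\prox_f(z),\,z-\prox_f(z))\in\gra\partial f\subset\Li\gra\partial f_n$, d'en tirer une suite $(\bar x_n,\bar u_n)\in\gra\partial f_n$ avec $\bar x_n+\bar u_n\to z$, puis d'exploiter la non-expansivit\'e ferme de $\prox_{f_n}$ entre $z$ et $\bar x_n+\bar u_n$, ce qui force $\|x_n-\bar x_n\|\to 0$ et donc $x_n\to\prox_f(z)$ fortement --- sans d\'etour par la convergence faible ni par les valeurs. Cette simplification d\'ecouple proprement la convergence des r\'esolvantes (qui d\'ecoule de la seule convergence des graphes) de la convergence des valeurs (qui, elle, requiert bien le point d'ancrage et la subdivision).
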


\begin{lemma}
\label{epi1}
Soient $(f_n)_{n\in\NN}$ une suite de fonctions de 
$\Gamma_0(\HH)$ 
telles que $(\forall n\in\NN)~\lev{0}f_n\neq\emp$ 
et $f\in\Gamma_0(\HH)$ telle que $\lev{0}f\neq\emp$. 
Si $\Ls\epi f_n\subset\epi f$ alors $\Ls\left(\lev{0}f_n\right)
\subset\lev{0}f$.
\end{lemma}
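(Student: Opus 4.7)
Mon plan est d'exploiter la caractérisation séquentielle de $\Ls$ énoncée dans la Proposition qui précède : un point appartient à $\Ls S_n$ si et seulement si il s'écrit comme limite d'une suite dont chaque terme appartient à un $S_{n_k}$ extrait.

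Concrètement, je partirais d'un point $x\in\Ls(\lev{0}f_n)$. Par la caractérisation rappelée, il existe une sous-suite $(f_{n_k})_{k\in\NN}$ et une suite $(x_k)_{k\in\NN}$ de $\HH$ telles que $x_k\in\lev{0}f_{n_k}$ pour tout $k\in\NN$ et $x_k\to x$. L'idée clef est de relever cette convergence dans $\HH\times\RR$ pour utiliser l'hypothèse $\Ls\epi f_n\subset\epi f$, qui ne porte pas sur les ensembles de niveau mais sur les épigraphes.

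Le relèvement naturel est $(x_k,0)\in\HH\times\RR$. En effet, la condition $x_k\in\lev{0}f_{n_k}$ se traduit par $f_{n_k}(x_k)\leq0$, ce qui équivaut exactement à $(x_k,0)\in\epi f_{n_k}$. Comme $(x_k,0)\to(x,0)$ dans $\HH\times\RR$, la caractérisation séquentielle (appliquée cette fois à la suite $(\epi f_n)_{n\in\NN}$) donne $(x,0)\in\Ls\epi f_n$. L'hypothèse fournit alors $(x,0)\in\epi f$, c'est-à-dire $f(x)\leq0$, ou encore $x\in\lev{0}f$.

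Cette démonstration est essentiellement une traduction entre les deux points de vue (ensembles de niveau sous-zéro et épigraphes) via le plongement $x\mapsto(x,0)$. Il n'y a pas d'étape véritablement délicate : le seul point à vérifier soigneusement est la cohérence topologique, à savoir que la convergence $x_k\to x$ dans $\HH$ entraîne bien $(x_k,0)\to(x,0)$ dans la topologie produit de $\HH\times\RR$, ce qui est immédiat.
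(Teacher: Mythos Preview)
Your argument is correct and is exactly the approach of the paper: the paper's proof is the one-liner ``Soit $x\in\Ls(\lev{0}f_n)$. Alors $(x,0)\in\Ls\epi f_n\subset\epi f$, d'où $f(x)\leq0$'', which compresses precisely the sequential reasoning you have written out in full.
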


\begin{proof}
Soit $x\in\Ls\left(\lev{0}f_n\right)$. Alors $(x,0)\in\Ls\epi f_n
\subset\epi f$, d'o\`u $f(x)\leq0$.
\end{proof}

\begin{lemma}
\label{epi2}
Soient $(f_n)_{n\in\NN}$ une suite de fonctions de 
$\Gamma_0(\HH)$ 
telles que $(\forall n\in\NN)~\lev{0}f_n\neq\emp$, 
$f\in\Gamma_0(\HH)$ telle que $\lev{0}f\neq\emp$ 
et $U$ une s\'election de $x\mapsto\Li\partial f_n(x)$. On 
suppose $\Li\left(\gra\partial f_n\right)\subset\gra\partial f$. 
Alors
\begin{enumerate}
\item $U$ est une s\'election de $\partial f$.
\item Il existe des s\'elections $U_n$ de $\partial f_n$, pour 
$n\in\NN$, d\'efinies sur $\dom U$ telles que $(\forall x\in\dom U)
~U_nx\rightarrow Ux$.
\end{enumerate}
\end{lemma}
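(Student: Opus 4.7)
Le plan repose enti\`erement sur la caract\'erisation s\'equentielle de $\Li$ rappel\'ee juste avant l'\'enonc\'e~: $y\in\Li S_n$ si et seulement s'il existe une suite $(y_n)_{n\in\NN}$ v\'erifiant $y_n\in S_n$ pour tout $n$ et $y_n\to y$. Les deux assertions s'obtiendront en appliquant cette caract\'erisation, successivement \`a $\Li\partial f_n(x)$ puis \`a $\Li(\gra\partial f_n)$.

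Pour (i), je fixerais $x\in\dom U$. Par d\'efinition d'une s\'election, $Ux\in\Li\partial f_n(x)$, donc il existe une suite $(u_n)_{n\in\NN}$ de $\HH$ telle que, pour tout $n\in\NN$, $u_n\in\partial f_n(x)$, et $u_n\to Ux$. Le couple $(x,u_n)$ appartient alors \`a $\gra\partial f_n$ pour tout $n$, et converge vers $(x,Ux)$ dans $\HH\times\HH$. En r\'eappliquant la caract\'erisation s\'equentielle \`a la suite $\bigl(\gra\partial f_n\bigr)$, on en d\'eduit $(x,Ux)\in\Li(\gra\partial f_n)\subset\gra\partial f$, c'est-\`a-dire $Ux\in\partial f(x)$.

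Pour (ii), l'id\'ee est de fabriquer les $U_n$ en op\'erant un choix point par point. Pour chaque $x\in\dom U$, on dispose de la suite $(u_n)$ construite au (i) ; choisissons-en une et notons-la $(v_n(x))_{n\in\NN}$ (ce choix rel\`eve de l'axiome du choix). Posons alors, pour tout $n\in\NN$,
\begin{equation}
U_n:\dom U\to\HH:x\mapsto v_n(x).
\end{equation}
Par construction $U_n x\in\partial f_n(x)$ pour tout $x\in\dom U$, donc $U_n$ est une s\'election de $\partial f_n$ de domaine contenant $\dom U$ ; et la convergence ponctuelle $U_n x\to Ux$ sur $\dom U$ est imm\'ediate.

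Il n'y a pas d'obstacle technique notable : la seule subtilit\'e est l'usage de l'axiome du choix pour assurer l'existence simultan\'ee des suites $(v_n(x))$ pour tous les $x\in\dom U$, et l'inclusion $\Li(\gra\partial f_n)\subset\gra\partial f$ fournit gratuitement (i) une fois qu'on a remont\'e la suite $(x,u_n)$ dans le graphe produit.
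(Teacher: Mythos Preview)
Your argument is correct and matches the paper's own proof essentially line for line: both apply the sequential characterization of $\Li$ to $\Li\partial f_n(x)$ to produce an approximating sequence $(u_n)$ with $u_n\in\partial f_n(x)$, use it to place $(x,Ux)$ in $\Li(\gra\partial f_n)\subset\gra\partial f$, and then set $U_nx:=u_n$. The paper's version is simply terser and leaves the pointwise choice implicit.
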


\begin{proof}
Soit $x\in\dom U$. Il existe $(U_nx)_{n\in\NN}$ une suite 
convergente vers $Ux$ telle que $(\forall n\in\NN)~(x,U_nx)\in\gra
\partial f_n$. Ainsi $(x,Ux)\in\Li\left(\gra\partial f_n\right)
\subset\gra\partial f$.
\end{proof}

\subsubsection{R\'esultat de convergence forte}

\begin{lemma}
\label{inegalite}
Soient $f\in\Gamma_0(\HH)$ telle que $\lev{0}\neq\emp$ et $U$ une 
s\'election de $\partial f$.
\begin{equation}
\left(\forall x\in\clev{0}f\cap\dom U\right)~\left(
\forall y\in \lev{0}f\right)~\Frac{f(x)}{\norm{Ux}}\leq\norm{y-x}
\end{equation}
\end{lemma}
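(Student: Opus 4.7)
The plan is to use the subgradient inequality for $Ux \in \partial f(x)$, combined with the Cauchy--Schwarz inequality and the sign information $f(y) \leq 0 < f(x)$.

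First I would fix $x \in \clev{0}f \cap \dom U$ and $y \in \lev{0}f$. Since $Ux \in \partial f(x)$, the defining inequality of the subdifferential gives
\begin{equation}
f(y) \geq f(x) + \scal{y - x}{Ux},
\end{equation}
which rearranges to $\scal{x - y}{Ux} \geq f(x) - f(y)$. Using $f(y) \leq 0 < f(x)$, this yields $\scal{x - y}{Ux} \geq f(x) > 0$.

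Next I would apply Cauchy--Schwarz to bound $\scal{x - y}{Ux} \leq \norm{x - y} \cdot \norm{Ux}$, so that
\begin{equation}
f(x) \leq \norm{x - y} \cdot \norm{Ux}.
\end{equation}
To divide by $\norm{Ux}$, I need $Ux \neq 0$: this follows from Lemme \ref{sous-gradient non nul} applied to the hypothesis $x \in \clev{0}f$, i.e.\ $f(x) > 0$ forces $Ux \notin \zer \partial f$. Dividing gives the claim.

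There is no real obstacle here; the argument is essentially a two-line consequence of the subgradient inequality and Cauchy--Schwarz. The only point to watch is the positivity of $\norm{Ux}$, which is already ensured earlier in the document.
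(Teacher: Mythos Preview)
Your proof is correct and follows essentially the same route as the paper: subgradient inequality plus Cauchy--Schwarz plus the sign condition $f(y)\leq 0<f(x)$, then divide by $\norm{Ux}\neq 0$. The only cosmetic difference is that the paper chains the inequalities directly as $-\norm{y-x}\norm{Ux}+f(x)\leq\scal{y-x}{Ux}+f(x)\leq f(y)\leq 0$ without isolating $\scal{x-y}{Ux}$ first.
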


\begin{proof}
Soient $x\in\dom U$ tel que $f(x)>0$ et $y\in\lev{0}f$. Alors
\begin{align}
-\norm{y-x}\norm{Ux}+f(x)
&\leq\scal{y-x}{Ux}+f(x)\nonumber\\
&\leq f(y)\nonumber\\
&\leq0.
\end{align}
\end{proof}

\begin{remark}
Le lemme pr\'ec\'edent est une l\'eg\`ere extension du Corollaire 
\ref{prop heritees de T}\ref{ineg dist}.
\end{remark}

\begin{notation}
\'Etant donn\'ees $(f_n)_{n\in\NN}$ une suite de fonctions de 
$\Gamma_0(\HH)$ 
telles que $(\forall n\in\NN)~\lev{0}f_n\neq\emp$ et 
$U_n$ des s\'elections de $\partial f_n$ (pour $n\in\NN$), on 
notera $D$ l'ensemble des points $x$ de $\HH$ 
pour lesquels la suite de terme g\'en\'eral $G_{f_n}^{U_n}x$ est 
d\'efinie \`a partir d'un certain rang. Pour tout $x\in D$ on 
notera $N_x$ un tel rang (choisi arbitrairement).
\begin{equation}
D=\linf\dom G_{f_n}^{U_n}
\end{equation}
\end{notation}

\begin{lemma}
\label{result elem de conv}
Soient $(f_n)_{n\in\NN}$ une suite de fonctions de 
$\Gamma_0(\HH)$ 
telles que $(\forall n\in\NN)~\lev{0}f_n\neq\emp$,
$U_n$ des s\'elections de $\partial f_n$ (pour $n\in\NN$), 
$f\in\Gamma_0(\HH)$ telle que $\lev{0}f\neq\emp$, 
$U$ une s\'election de $\partial f$ d\'efinie sur $D$ et $x\in D$.
\begin{enumerate}
\item Si $f(x)\leq0$ et $x\in\Li\left(\lev{0}f_n\right)$ alors 
$G_{f_n}^{U_n}x\rightarrow G_f^Ux$.
\item Si $f(x)>0$ et $x\in\lsup\left(\lev{0}f_n\right)$ alors 
$G_{f_n}^{U_n}x\not\rightarrow G_f^Ux$.
\item Si $f(x)>0$, $x\in\complement\lsup\left(\lev{0}f_n\right)$, 
$f_n(x)\rightarrow f(x)$ et $U_nx\rightarrow Ux$ alors $G_{f_n}
^{U_n}x\rightarrow G_f^Ux$.
\end{enumerate}
\end{lemma}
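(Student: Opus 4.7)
Mon plan est de traiter s\'epar\'ement les trois points en observant d'abord que, par d\'efinition, $G_f^Ux=x$ lorsque $f(x)\leq0$ et $G_f^Ux=x-f(x)\norm{Ux}^{-2}Ux$ lorsque $f(x)>0$, avec $Ux\neq0$ d'apr\`es le Lemme~\ref{sous-gradient non nul}. Pour (i), l'hypoth\`ese $x\in\Li(\lev{0}f_n)$ fournit une suite $(y_n)_{n\in\NN}$ telle que $y_n\in\lev{0}f_n$ et $y_n\to x$. Comme $G_f^Ux=x$, il suffit d'appliquer l'in\'egalit\'e de la Proposition~\ref{quasi-contractance} au couple $(x,G_{f_n}^{U_n}x)\in\gra G_{f_n}$ (pour $n\geq N_x$) et \`a $y_n\in\lev{0}f_n$: on obtient $\norm{G_{f_n}^{U_n}x-x}^2\leq\norm{x-y_n}^2-\norm{G_{f_n}^{U_n}x-y_n}^2\leq\norm{x-y_n}^2$, dont le second membre tend vers $0$.

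Pour (ii), l'hypoth\`ese $x\in\lsup(\lev{0}f_n)$ fournit une infinit\'e d'indices $k$ pour lesquels $x\in\lev{0}f_k$; pour ces indices, $G_{f_k}^{U_k}x=x$. Or $G_f^Ux=x-f(x)\norm{Ux}^{-2}Ux\neq x$ puisque $f(x)>0$ et $Ux\neq0$. Cette sous-suite constante \'egale \`a $x$ emp\^eche donc la convergence de $(G_{f_n}^{U_n}x)_{n\in\NN}$ vers $G_f^Ux$.

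Pour (iii), la condition $x\notin\lsup(\lev{0}f_n)$ assure l'existence d'un rang $N\geq N_x$ tel que $f_n(x)>0$ pour tout $n\geq N$; comme alors $x\notin\lev{0}f_n$ et $x\in D=\linf\dom G_{f_n}^{U_n}$, on en d\'eduit $x\in\dom U_n$ pour $n\geq N$, et donc $G_{f_n}^{U_n}x=x-f_n(x)\norm{U_nx}^{-2}U_nx$. Les hypoth\`eses $f_n(x)\to f(x)$ et $U_nx\to Ux$, jointes \`a $\norm{Ux}>0$ et \`a la continuit\'e des op\'erations arithm\'etiques, livrent alors $G_{f_n}^{U_n}x\to x-f(x)\norm{Ux}^{-2}Ux=G_f^Ux$. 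L'unique point d\'elicat est de v\'erifier que toutes les quantit\'es sont bien d\'efinies \`a partir d'un certain rang, ce qui d\'ecoule de $x\in D$ et de la stricte positivit\'e asymptotique de $f_n(x)$; aucune difficult\'e technique majeure ne subsiste.
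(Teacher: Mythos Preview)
Your proof is correct and follows essentially the same route as the paper's. The only cosmetic difference is in part~(i): the paper computes $\norm{G_{f_n}^{U_n}x-x}$ explicitly (distinguishing the cases $f_n(x)\leq0$ and $f_n(x)>0$) and then bounds it by $\norm{x-y_n}$ via Lemma~\ref{inegalite}, whereas you obtain the same bound in one stroke from the quasi-firm inequality of Proposition~\ref{quasi-contractance}.
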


\begin{proof}
Notons $G=G_f^U$ et, pour tout $n\in\NN$, $G_n=G_{f_n}^{U_n}$. 
Soit $N_x$ un entier tel que $(\forall n\geq N_x)~x\in\dom 
G_n$.
\begin{enumerate}
\item Soit $(x_n)_{n\in\NN}$ une suite de $\HH$ telle que $(
\forall n\in\NN)~f_n(x_n)\leq0$ et $x_n\rightarrow x$. On a
\begin{equation}
(\forall n\geq N_x)\quad\norm{G_nx-Gx}=
\begin{cases}
0 & \text{si }f_n(x)\leq0\\
\Frac{f_n(x)}{\norm{U_nx}} & \text{si }f_n(x)>0.
\end{cases}
\end{equation}
D'apr\`es le Lemme \ref{inegalite},
\begin{equation}
(\forall n\geq N_x)\quad\norm{G_nx-Gx}\leq\norm{x_n-x}.
\end{equation}
Par cons\'equent $G_nx\rightarrow Gx$.
\item Il existe une infinit\'e d'indices $n\in\NN$ tels que 
$f_n(x)\leq0$, donc tels que 
$\norm{G_nx-Gx}=\Frac{f(x)}{\norm{Ux}}>0$.
\item Soit $N_x'$ un entier sup\'erieur \`a $N_x$ tel que 
$(\forall n\geq N'_x)~x\in\clev{0}f_n\cap\dom U_n$. Alors
\begin{equation}
(\forall n\geq N_x')~G_nx-Gx=\Frac{f_n(x)}{\norm{U_nx}^2}U_nx-
\Frac{f(x)}{\norm{Ux}^2}Ux\rightarrow0.
\end{equation}
\end{enumerate}
\end{proof}

\begin{proposition}
Soient $(f_n)_{n\in\NN}$ une suite de fonctions de 
$\Gamma_0(\HH)$ 
telles que $(\forall n\in\NN)~\lev{0}f_n\neq\emp$, 
$f\in\Gamma_0(\HH)$ telle que $\lev{0}f\neq\emp$ et 
$U$ une s\'election de $x\mapsto\Li\partial f_n(x)$. On 
suppose $\epi f_n\pk\epi f$ et $\left(\forall x\in\clev{0}f\right)~
f_n(x)\rightarrow f(x)$. Alors
\begin{enumerate}
\item $U$ est une s\'election de $\partial f$ et $\dom U\subset D$,
\item Il existe des s\'elections $U_n$ de $\partial f_n$ (pour 
$n\in\NN$) d\'efinies sur $\dom U$ telles que
\begin{equation}
\forall x\in\Li\left(\lev{0}f_n\right)\cup\left(\clev{0}f
\cap\dom U)\right)~G_{f_n}^{U_n}x\rightarrow G_f^Ux
\end{equation}
\end{enumerate}
\end{proposition}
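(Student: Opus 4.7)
The proof assembles three ingredients already established in the chapter: the characterization of epi-convergence via graph convergence of subdifferentials (Theorem 3.66), the selection transfer (Lemma \ref{epi2}), and the pointwise convergence dichotomy for subgradient projections (Lemma \ref{result elem de conv}).

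First, I would unpack the hypothesis $\epi f_n \pk \epi f$ through Theorem 3.66 to obtain $\gra \partial f_n \pk \gra \partial f$, which yields in particular the inclusion $\Li\,\gra\partial f_n \subset \gra\partial f$. This is exactly the premise of Lemma \ref{epi2}: applied to the selection $U$ of $x \mapsto \Li\,\partial f_n(x)$, it delivers item (i) of the proposition, namely that $U$ is a selection of $\partial f$, and simultaneously produces selections $U_n$ of $\partial f_n$ defined on $\dom U$ with $U_n x \to Ux$ pointwise for every $x \in \dom U$. The inclusion $\dom U \subset D$ is then immediate: for each $x \in \dom U$ and each $n \in \NN$ one has $x \in \dom U_n \subset \dom G_{f_n}^{U_n}$, hence $x \in \linf\dom G_{f_n}^{U_n} = D$.

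Second, for the convergence assertion, I would split the target set $\Li(\lev{0}f_n) \cup (\clev{0}f \cap \dom U)$ into two cases and invoke the corresponding items of Lemma \ref{result elem de conv}. From $\epi f_n \pk \epi f$ one extracts $\Ls\,\epi f_n \subset \epi f$, and Lemma \ref{epi1} then provides $\Ls(\lev{0}f_n) \subset \lev{0}f$. If $x \in \Li(\lev{0}f_n)$, the inclusion $\Li \subset \Ls$ forces $f(x) \leq 0$, and item (i) of Lemma \ref{result elem de conv} applies. If $x \in \clev{0}f \cap \dom U$, then $f(x) > 0$ and the inclusion $\lsup(\lev{0}f_n) \subset \Ls(\lev{0}f_n) \subset \lev{0}f$ shows that $x \notin \lsup(\lev{0}f_n)$; combining this with the hypothesis $f_n(x) \to f(x)$ and with $U_n x \to Ux$ from the previous step, item (iii) of Lemma \ref{result elem de conv} delivers $G_{f_n}^{U_n} x \to G_f^U x$.

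The subtle point, which I expect to be the main obstacle, is making sure each $x$ under consideration actually lies in $D$ so that $G_{f_n}^{U_n} x \to G_f^U x$ is well posed. For $x \in \clev{0}f \cap \dom U$ this is automatic by the first paragraph. For $x \in \Li(\lev{0}f_n) \setminus \dom U$ one has only an approximating sequence $x_n \in \lev{0}f_n$ with $x_n \to x$, and $x$ need not itself belong to $\lev{0}f_n \cup \dom U_n = \lev{0}f_n \cup \dom U$ from some rank on. Handling this requires either enlarging each $\dom U_n$ beyond $\dom U$ (the selections produced by Lemma \ref{epi2} can be extended by any measurable choice) or restricting case (i) of Lemma \ref{result elem de conv} to those $x \in \Li(\lev{0}f_n)$ already known to lie in $D$. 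Once this bookkeeping is settled, the three lemmas slot together and the proof is complete.
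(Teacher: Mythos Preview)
Your proposal follows exactly the paper's approach: the paper's proof of (i) simply invokes Lemma~\ref{epi2}, and for (ii) it invokes Lemmas~\ref{epi2} and~\ref{result elem de conv} together with the observation (via Lemma~\ref{epi1}) that $\clev{0}f\subset\complement\lsup(\lev{0}f_n)$, without spelling out the intermediate steps you supply (in particular it does not cite the Attouch theorem explicitly to pass from epi-convergence to $\Li\,\gra\partial f_n\subset\gra\partial f$, though this is of course the intended mechanism).

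The subtlety you flag in the last paragraph --- that a point $x\in\Li(\lev{0}f_n)\setminus\dom U$ need not belong to $D=\linf\dom G_{f_n}^{U_n}$, so that $G_{f_n}^{U_n}x$ may fail to be defined for infinitely many $n$ --- is a genuine gap, and the paper's two-line proof does not address it either. Your proposed remedies (extend each $U_n$ to all of $\dom\partial f_n$, or read the conclusion as implicitly restricted to $D$) are the natural ones; note however that extending $U_n$ still only gives $D=\linf(\lev{0}f_n\cup\dom\partial f_n)$, which can remain strictly smaller than $\Li(\lev{0}f_n)$, so the second option is the honest fix.
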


\begin{proof}
\begin{enumerate}
\item R\'esulte du Lemme \ref{epi2}.
\item R\'esulte de Lemme \ref{epi2} et Lemme 
\ref{result elem de conv}, en notant bien que 
$\clev{0}f\subset\complement\lsup\left(\lev{0}f_n\right)$ 
d'apr\`es le Lemme \ref{epi1}.
\end{enumerate}
\end{proof}

\section{Propri\'et\'es d'op\'erateur multivoque}

Pr\'esentons maintenant nos r\'esultats sur le projecteur 
sous-diff\'erentiel 
en tant qu'op\'erateur multivoque : les 
propri\'et\'es de ses valeurs et sa semi-continuit\'e.

\subsection{Propri\'et\'es des valeurs de $G_f$}

\begin{proposition}[Fonction inverse g\'en\'eralis\'ee]
\label{inv}
Soit $f\in\Gamma_0(\HH)$ telle que $\lev{0}f\neq\emp$. Utilisons 
la notation suivante:
\begin{equation}
\Inv:\HH\backslash\{0\}\rightarrow\HH\backslash\{0\}:x\mapsto
\norm{x}^{-2}x.
\end{equation}
\begin{enumerate}
\item
\label{expr avec inv}
Sur $\clev{0}f$, $G_f=\Id-f\Inv\circ\partial f$.
\item
\label{homeo}
$\Inv$ est un hom\'eomorphisme involutif de $\HH\backslash\{0\}$.
\item
$\Inv$ est faiblement continu si et seulement si $\dim\HH<+\infty$.
\item
\label{bornitude}
Soit $D\subset\HH\backslash\{0\}$. $\Inv(D)$ born\'e 
$\Longleftrightarrow0\in\complement\cl{D}$
\item $\Inv$ est Fr\'echet-diff\'erentiable sur 
$\HH\backslash\{0\}$ et
\begin{equation}
\left(\forall x\in\HH\backslash\{0\}\right)~\Diff(\Inv)(x)
=\norm{x}^{-2}\Id-2\scal{\Inv x}{\cdot}\Inv x.
\end{equation}
\end{enumerate}
\end{proposition}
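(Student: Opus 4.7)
Mon plan est de traiter les cinq assertions indépendamment, la plupart reposant sur des calculs directs ou sur des lemmes déjà établis dans ce mémoire.

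Je commencerai par (i) en développant simplement la définition de $G_f$ sur $\clev{0}f\cap\dom\partial f$ : pour tout sous-gradient $u\in\partial f(x)$, le Lemme \ref{sous-gradient non nul} garantit $u\neq0$, donc $\norm{u}^{-2}u=\Inv u$, d'où l'expression annoncée. Pour (ii), je vérifierai par calcul direct que $\Inv(\Inv x)=\norm{x}^2\cdot\norm{x}^{-2}x=x$ sur $\HH\backslash\{0\}$, puis j'observerai que $\Inv$ est continue comme produit de l'application scalaire continue $\norm{\cdot}^{-2}$ et de l'identité ; toute involution continue étant un homéomorphisme, la conclusion s'ensuit.

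Pour (iii), l'implication directe résultera immédiatement de l'égalité entre topologies faible et forte en dimension finie. La réciproque constituera l'obstacle principal : en dimension infinie, je construirai un contre-exemple à partir d'une suite orthonormale $(e_n)_{n\in\NN}$ en posant $x_n=e_0+e_n$ pour $n\geq1$. On aura $\norm{x_n}^2=2$ et $x_n\weakly e_0$, mais $\Inv(x_n)=\frac{1}{2}(e_0+e_n)\weakly\frac{e_0}{2}\neq e_0=\Inv(e_0)$. Le point délicat est d'éviter la limite faible $0$, exclue du domaine, tout en exploitant la perte de norme sous convergence faible.

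Pour (iv), je remarquerai que la bornitude de $\Inv(D)$ équivaut à $\sup_{x\in D}\norm{x}^{-1}<+\infty$, c'est-à-dire $\inf_{x\in D}\norm{x}>0$, soit encore $0\notin\cl{D}$. Enfin, pour (v), il suffira d'appliquer directement le Lemme \ref{diff inv} et de réécrire le terme $\norm{x}^{-4}\scal{x}{\cdot}x$ sous la forme $\scal{\Inv x}{\cdot}\Inv x$ grâce à l'identité $\Inv x=\norm{x}^{-2}x$.
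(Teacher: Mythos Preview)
Your proposal is correct and follows essentially the same approach as the paper's proof: a direct unfolding of the definition for (i), elementary verification of involutivity and continuity for (ii), the same orthonormal-sequence counterexample $x_n=e_0+e_n$ (the paper writes it as $e'_n+e_j$ from a Hilbert basis) for the nontrivial direction of (iii), the same chain of equivalences via $\inf_{x\in D}\norm{x}>0$ for (iv), and a direct appeal to Lemma~\ref{diff inv} for (v). Your presentation is slightly more explicit in places (e.g., invoking Lemma~\ref{sous-gradient non nul} in (i) and computing the weak limit $\tfrac{1}{2}e_0$ of $\Inv(x_n)$ in (iii)), but there is no substantive difference in method.
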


\begin{proof}
\begin{enumerate}
\item D\'ecoule directement de la d\'efinition du projecteur 
sous-diff\'erentiel.
\item On v\'erifie facilement que $\Inv$ est continu et involutif, 
d'o\`u l'hom\'eomorphie.
\item
Si $\dim\HH<+\infty$, $\Inv$ est faiblement continu car continu. 
Supposons $\dim\HH=+\infty$. Soient $(e_i)_{i\in I}$ une base 
hilbertienne de $\HH$, $(e'_n)_{n\in\NN}$ une suite injective de 
$\menge{e_i}{i\in I}$ et $j\in I$. Notons, pour tout $n\in\NN$, 
$x_n=e'_n+e_j$. Alors $(\forall i\in I)~\scal{x_n-e_j}{e_i}=
\scal{e'_n}{e_i}\rightarrow0$ par injectivit\'e. On en d\'eduit, 
via \dueto{Proposition 2.40}{Livre1}, que $x_n\weakly e_j$. Pour 
tout $n\in\NN$, $\scal{\Inv x_n-\Inv e_j}{e_j}=
\Scal{\Frac{e'_n+e_j}{2(1+\scal{e'_n}{e_j})}-e_j}{e_j}=
-\frac{1}{2}\neq0$. Par cons\'equent, $\Inv x_n\not\weakly\Inv e_j$. 
$\Inv$ n'est donc pas faiblement continu.
\item
$\Inv(D)~\textup{ born\'e}\Leftrightarrow(\exi M\in\RR_{++})~
(\forall x\in D)~\norm{\Inv x}\leq M\Leftrightarrow(\exi M\in
\RR_{++})~(\forall x\in D)~M\leq\norm{x}\Leftrightarrow(\exi M'\in
\RR_{++})~\textup{B}(0;M)\subset\complement D\Leftrightarrow0\in
\inte\complement D=\complement\cl D$
\item Lemme \ref{diff inv}.
\end{enumerate}
\end{proof}

\begin{remark}
Dans le cas $\HH=\RR$, $\Inv$ n'est autre que la fonction inverse 
classique.
\end{remark}

\begin{corollary}
Soit $f\in\Gamma_0(\HH)$ telle que $\lev{0}f\neq\emp$. $G_f$ est 
\`a valeurs ferm\'ees born\'ees.
\end{corollary}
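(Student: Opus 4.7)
Mon plan s'appuie sur la réécriture $G_f x = x - f(x)\cdot\Inv(\partial f(x))$ en tout point $x\in\clev{0}f$, fournie par la Proposition \ref{inv}\ref{expr avec inv}, et utilise systématiquement les autres propriétés de $\Inv$ établies dans cette même proposition (homéomorphie sur $\HH\setminus\{0\}$ et caractérisation de la bornitude de $\Inv(D)$). Je distingue deux cas selon le signe de $f(x)$.

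Si $f(x)\leq 0$, alors $G_f x = \{x\}$ est un singleton, donc trivialement fermé et borné. Supposons dès lors $f(x)>0$. La sous-différentielle $\partial f(x)$ est un convexe fermé de $\HH$ (comme intersection de demi-espaces fermés) qui, par le Lemme \ref{sous-gradient non nul}, ne contient pas $0$ ; son complémentaire étant ouvert et contenant $0$, on en déduit $0\in\complement\cl{\partial f(x)}$. La Proposition \ref{inv}\ref{bornitude} fournit alors la bornitude de $\Inv(\partial f(x))$, qui se transmet à $G_f x$ via l'application affine continue $v\mapsto x-f(x)v$.

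Pour la fermeture, j'exploite l'homéomorphie de $\Inv$ sur $\HH\setminus\{0\}$ (Proposition \ref{inv}\ref{homeo}) : puisque $\partial f(x)$ est un fermé de $\HH$ inclus dans l'ouvert $\HH\setminus\{0\}$, il y est également fermé, et son image $\Inv(\partial f(x))$ y est fermée. L'application affine $v\mapsto x-f(x)v$ étant un homéomorphisme de $\HH$, elle transporte cette fermeture à $G_f x$. Le point le plus délicat sera de passer de la fermeture dans l'ouvert $\HH\setminus\{x\}$ (image de $\HH\setminus\{0\}$ par cette application) à la fermeture dans $\HH$ tout entier : il faudra interdire à $x$ d'être point d'accumulation de $G_f x$, ce qui, via la relation $\|\Inv(u)\|=1/\|u\|$, se ramène à la bornitude de $\partial f(x)$ dans $\HH$.
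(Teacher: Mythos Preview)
Votre plan suit fidèlement la démonstration de l'article : même distinction de cas, même recours à la Proposition \ref{inv} (expression via $\Inv$, homéomorphie, caractérisation de la bornitude). Vous avez toutefois le mérite d'isoler un point que l'article escamote : l'homéomorphie de $\Inv$ ne donne la fermeture de $\Inv(\partial f(x))$ que dans $\HH\setminus\{0\}$, et pour passer à la fermeture dans $\HH$ il faut exclure $0$ de son adhérence, ce qui, puisque $\|\Inv u\|=1/\|u\|$, revient exactement à exiger que $\partial f(x)$ soit borné.

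Or ce maillon que vous comptez établir est faux en général. La bornitude de $\partial f(x)$ est assurée lorsque $x\in\inte\dom f$, mais peut faire défaut au bord du domaine. Prenez $\HH=\RR$ et
\[
f(x)=\begin{cases}1-x & \text{si }x\geq 0,\\ +\infty & \text{sinon.}\end{cases}
\]
Alors $f\in\Gamma_0(\RR)$, $\lev{0}f=[1,+\infty[\neq\emp$, $f(0)=1>0$ et $\partial f(0)=\left]-\infty,-1\right]$, non borné. On calcule
\[
G_f(0)=\Menge{0-\frac{1}{u^2}\,u}{u\leq -1}=\Menge{-\frac{1}{u}}{u\leq -1}=\left]0,1\right],
\]
qui n'est pas fermé dans $\RR$. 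L'énoncé du corollaire, pour ce qui est de la fermeture, est donc en défaut sans hypothèse additionnelle (par exemple $x\in\inte\dom f$). Votre diagnostic du point délicat était juste ; c'est le remède envisagé qui échoue. La démonstration de l'article souffre du même défaut, simplement masqué par l'élision du passage de $\HH\setminus\{0\}$ à $\HH$. La partie \og bornitude\fg{} de votre argument, en revanche, est correcte et conforme à celle de l'article.
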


\begin{proof}
Soit $x\in\HH$. Si $x\in C$, $G_fx=\{x\}$ est ferm\'e born\'e 
(m\^eme compact). Supposons $x\in\complement C$. Comme $\partial 
f(x)$ est ferm\'e et ne contient pas $0$, on d\'eduit de 
Proposition \ref{inv}\ref{homeo} que 
$\Inv(\partial f(x))=\Inv^{-1}(\partial f(x))$ est ferm\'e et de 
Proposition \ref{inv}\ref{bornitude} que $\Inv(\partial f(x))$ est 
born\'e. Il d\'ecoule alors directement de Proposition \ref{inv}
\ref{expr avec inv} que $G_fx$ est ferm\'e born\'e.
\end{proof}

\begin{remark}
Les valeurs de $G_f$ ne sont pas n\'ecessairement convexes. 
Consid\'erons par exemple la fonction $f:\RR^2\rightarrow\RR:(x,y)
\mapsto1+\max(x,y)$. Alors $(u,v)\in\RR^2$ est un sous-gradient de 
$f$ en $(0,0)\in\clev{0}f$ si et seulement si
\begin{equation}
\label{carac exemple}
(\forall(x,y)\in\RR^2)~xu+yv\leq max(x,y).
\end{equation}
En appliquant \ref{carac exemple} aux valeurs $(-1,0),(1,0),(1,1)$ 
et $(-1,-1)$ de $(x,y)$ on obtient $u\in[0,1]$ et $v=1-u$. Ainsi 
$\menge{(u,1-u)}{u\in[0,1]}\subset\partial f(0,0)$. On v\'erifie 
facilement l'inclusion r\'eciproque gr\^ace \`a la 
caract\'erisation \ref{carac exemple}. On note que $\partial 
f(0,0)=[(1,0),(0,1)]$. $\Inv(\partial f(0,0))$ n'est pas convexe: 
cela s'observe facilement sur un dessin, ou bien en v\'erifiant 
par l'absurde que $\left(\Frac{1}{2},\Frac{1}{2}\right)\not\in
\Inv(\menge{(u,1-u)}{u\in[0,1]})$. Par cons\'equent $G_f(0,0)$ 
n'est pas convexe.
\end{remark}

\subsection{Semi-continuit\'e}

On notera $\Hfo$ l'espace $\HH$ muni de la topologie induite par 
la norme et $\Hfa$ l'espace $\HH$ muni de la topologie affaiblie.

\begin{definition}
\dueto{Definition 1.4.1, Definition 1.4.2, Definition 1.4.3}{AubFr90}
Soient $X$ et $Y$ deux espaces m\'etriques, $A:X\rightarrow2^Y$ et 
$x\in\dom A$.
\begin{itemize}
\item
On dira que $A$ est {\em semi-continu inf\'erieurement} en 
$x$ si et seulement si, pour toute suite $(x_n)_{n\in\NN}$ de 
$\dom A$ convergente vers $x$ et tout $y\in Ax$, il existe une 
suite $(y_n)_{n\in\NN}$ de $Y$ convergente vers $y$ telle que 
$(\forall n\in\NN)~y_n\in Ax_n$.
\item
On dira que $A$ est {\em semi-continu sup\'erieurement} en 
$x$ si et seulement si, pour tout voisinage $V$ de $Ax$, il existe 
un voisinage $U$ de $x$ tel que $A(U)\subset V$.
\item
On dira que $A$ est \em{continu} en $x$ si et seulement si $A$ est 
\`a la fois semi-continu sup\'erieurement en $x$ et semi-continu 
inf\'erieurement en $x$.
\end{itemize}
\end{definition}

\begin{remark}
Si $A$ est univoque au voisinage d'un point, les notions de 
semi-continuit\'e inf\'erieure, semi-continuit\'e sup\'erieure et 
continuit\'e en ce point se confondent.
\end{remark}

\subsubsection{Semi-continuit\'e du sous-diff\'erentiel}

\begin{proposition}
\label{sousdiff sci}
Soient $f\in\Gamma_0(\HH)$. Supposons $x\in\inte\dom f$.
Si $f$ est Fr\'echet-diff\'erentiable en $x$ alors $\partial f$ est 
semi-continu inf\'erieurement en $x$.
\end{proposition}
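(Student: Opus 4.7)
Comme $f$ est Fr\'echet-diff\'erentiable en $x\in\inte\dom f$, on sait que $\partial f(x)=\{\nabla f(x)\}$ (r\'esultat classique d'analyse convexe, cf.\ \dueto{Proposition 17.31}{Livre1}). La condition de semi-continuit\'e inf\'erieure \`a v\'erifier en $x$ se r\'eduit donc \`a ceci : pour toute suite $(x_n)_{n\in\NN}$ de $\dom\partial f$ convergeant fortement vers $x$, il existe une suite $(y_n)_{n\in\NN}$ avec $y_n\in\partial f(x_n)$ et $y_n\to\nabla f(x)$. Mon plan est d'\'etablir davantage : je choisirai $y_n\in\partial f(x_n)$ arbitrairement (possible car $x_n\in\dom\partial f$), puis je montrerai que, quel que soit ce choix, $y_n\to\nabla f(x)$ en norme.

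Concr\`etement, je fixe $\epsilon>0$ et $\delta>0$ fourni par la Fr\'echet-diff\'erentiabilit\'e, de sorte que $\|h\|<\delta$ entra\^ine $|f(x+h)-f(x)-\scal{\nabla f(x)}{h}|\leq\epsilon\|h\|$. Pour $n$ assez grand pour que $t_n=\|x_n-x\|<\delta/2$, si $y_n=\nabla f(x)$ il n'y a rien \`a faire ; sinon je pose $v_n=(y_n-\nabla f(x))/\|y_n-\nabla f(x)\|$ et je teste l'in\'egalit\'e sous-diff\'erentielle au point $z_n=x_n+t_nv_n$, qui v\'erifie $\|z_n-x\|\leq 2t_n<\delta$. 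L'id\'ee cruciale est de choisir $v_n$ exactement dans la direction du d\'efaut $y_n-\nabla f(x)$ \`a contr\^oler. J'opposerai alors deux estimations de $f(z_n)-f(x_n)$. D'une part, l'in\'egalit\'e sous-diff\'erentielle en $x_n$ donne $f(z_n)-f(x_n)\geq t_n\scal{y_n}{v_n}=t_n\scal{\nabla f(x)}{v_n}+t_n\|y_n-\nabla f(x)\|$. D'autre part, en d\'eveloppant \`a l'ordre un $f(z_n)$ et $f(x_n)$ autour de $x$, on obtient $f(z_n)-f(x_n)=t_n\scal{\nabla f(x)}{v_n}+r_n^z-r_n^x$ avec $|r_n^z|\leq 2\epsilon t_n$ et $|r_n^x|\leq\epsilon t_n$. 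La comparaison fournit $t_n\|y_n-\nabla f(x)\|\leq 3\epsilon t_n$, soit $\|y_n-\nabla f(x)\|\leq 3\epsilon$. Comme $\epsilon$ est arbitraire, $y_n\to\nabla f(x)$ en norme.

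La principale difficult\'e est le passage de la convergence faible \`a la convergence forte. $\partial f$ \'etant localement born\'e au voisinage d'un point int\'erieur au domaine de $f$, et son graphe \'etant fortement-faiblement s\'equentiellement ferm\'e, il est ais\'e d'obtenir $y_n\weakly\nabla f(x)$ en passant \`a la limite faible dans l'in\'egalit\'e sous-diff\'erentielle \'evalu\'ee en un point arbitraire. Mais ceci est insuffisant, la semi-continuit\'e inf\'erieure ici d\'efinie portant sur la topologie forte. C'est la Fr\'echet-diff\'erentiabilit\'e, sensiblement plus forte que celle de G\^ateaux, qui autorise le choix explicite de la direction $v_n$ et le contr\^ole uniforme du reste sur toute la boule $B(x,\delta)$ ; elle joue ici un r\^ole essentiel.
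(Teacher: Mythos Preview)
Your proof is correct. The argument by choosing the test direction $v_n$ along $y_n-\nabla f(x)$ and comparing the subgradient inequality at $x_n$ with the first-order Taylor expansion around $x$ is clean and yields the strong convergence $y_n\to\nabla f(x)$ for \emph{any} choice $y_n\in\partial f(x_n)$. One small point you leave implicit: to apply the Fr\'echet estimate at $z_n$ and $x_n$ you need $z_n,x_n\in\dom f$; this is harmless since $x\in\inte\dom f$ allows you to take $\delta$ small enough, but it is worth saying.

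The paper takes a much shorter route: it simply invokes the earlier Proposition~\ref{Frechet et selection} (i.e.\ \dueto{Proposition 17.32}{Livre1}), whose implication (ii)$\Rightarrow$(iii) states that Fr\'echet differentiability at $x$ forces every selection $U$ of $\partial f$ with $x\in\inte\dom U$ to be continuous at $x$. Given $u\in\partial f(x)$, one picks any selection $U$ on $\dom\partial f$ with $Ux=u$ and concludes $Ux_n\to u$. Your direct $\varepsilon$--$\delta$ computation is, in substance, a self-contained proof of that very implication; what you gain is independence from the cited black box, what the paper gains is brevity since the equivalence is already recorded in the text.
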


\begin{proof}
Soit $(x_n)_{n\in\NN}$ une suite de $\dom\partial f$ convergente 
vers $x$ et $u\in\partial f(x)$. Soit $U$ une s\'election de 
$\partial f$ d\'efinie sur $\dom\partial f$ telle que $Ux=u$. 
D'apr\`es la Proposition \ref{Frechet et selection} $U$ est 
continue en $x$, donc $Ux_n\rightarrow Ux=u$ et $(\forall n\in\NN)~
Ux_n\in\partial f(x_n)$.
\end{proof}

\begin{definition}
Soient $A:\HH\rightarrow2^\HH$ et $x_0\in\dom A$. On dira que $A$ 
est {\em h\'emi-continu sup\'erieurement} en $x_0$ si et seulement 
si, pour tout $u\in\HH$, la fonction $\HH\rightarrow\RXX:x\mapsto
\sigma_{Ax}(u)$ est semi-continue sup\'erieurement en $x_0$.
\end{definition}

\begin{theorem}
\dueto{3.3.Theorem 10}{AubEk84}
Soient $A:\Hfo\rightarrow2^{\Hfa}$ et $x_0\in\dom A$. On suppose 
$A$ h\'emi-continu sup\'erieurement $x_0$ et $Ax_0$ convexe 
faiblement compact. Alors $A$ est semi-continu sup\'erieurement en 
$x_0$.
\end{theorem}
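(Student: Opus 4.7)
La d\'emonstration proc\`ede par l'absurde: supposons que $A$ ne soit pas semi-continu sup\'erieurement en $x_0$. Il existe alors un voisinage faiblement ouvert $V$ de $Ax_0$ tel qu'aucune boule ouverte centr\'ee en $x_0$ n'est envoy\'ee par $A$ dans $V$. En particulier, pour chaque entier $n\geq1$ on peut choisir $x_n\in\dom A$ avec $\norm{x_n-x_0}<1/n$ et $y_n\in Ax_n\setminus V$. L'id\'ee est d'utiliser l'h\'emi-continuit\'e sup\'erieure et la compacit\'e faible de $Ax_0$ pour extraire une sous-suite de $(y_n)_{n\in\NN}$ faiblement convergente vers un point de $Ax_0$, ce qui contredira $V$ faiblement ouvert et $y_n\notin V$ pour tout $n$.

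La premi\`ere \'etape consiste \`a \'etablir la bornitude en norme de $(y_n)_{n\in\NN}$. L'h\'emi-continuit\'e sup\'erieure de $A$ en $x_0$ donne, pour tout $u\in\HH$,
\begin{equation}
\limsup_{n\to+\infty}\scal{y_n}{u}\leq\limsup_{n\to+\infty}\sigma_{Ax_n}(u)\leq\sigma_{Ax_0}(u)<+\infty,
\end{equation}
la derni\`ere in\'egalit\'e provenant de ce que $Ax_0$ est born\'e, car faiblement compact. Appliquant ceci \`a $u$ et \`a $-u$, on voit que $(\scal{y_n}{u})_{n\in\NN}$ est born\'ee pour chaque $u\in\HH$; le th\'eor\`eme de Banach-Steinhaus, appliqu\'e \`a la famille des formes lin\'eaires continues $(\scal{y_n}{\cdot})_{n\in\NN}$, livre alors $\sup_n\norm{y_n}<+\infty$. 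La r\'eflexivit\'e de $\HH$ fournit ensuite une sous-suite $(y_{n_k})_{k\in\NN}$ et un $y^\ast\in\HH$ tels que $y_{n_k}\weakly y^\ast$.

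Par passage \`a la limite dans l'in\'egalit\'e pr\'ec\'edente, $\scal{y^\ast}{u}\leq\sigma_{Ax_0}(u)$ pour tout $u\in\HH$. Or $Ax_0$ est convexe et faiblement compact, donc en particulier fortement ferm\'e; le th\'eor\`eme de Hahn-Banach l'identifie \`a l'intersection des demi-espaces ferm\'es qui le contiennent, d'o\`u $y^\ast\in Ax_0\subset V$. Comme $V$ est faiblement ouvert, $y_{n_k}\in V$ \`a partir d'un certain rang, contradiction. L'obstacle principal sera l'\'etape de bornitude: le passage de bornes ponctuelles sur les fonctionnelles $\scal{y_n}{\cdot}$ \`a une borne uniforme en norme repose essentiellement sur Banach-Steinhaus; la caract\'erisation de $Ax_0$ par ses demi-espaces d'appui et l'extraction faible sont, elles, classiques dans un espace de Hilbert.
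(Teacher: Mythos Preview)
Your argument is correct. Note, however, that the paper does not give its own proof of this theorem: it is quoted verbatim from Aubin--Ekeland (reference~\cite{AubEk84}, Theorem~10 in \S3.3) and used as a black box to deduce Corollary~\ref{sousdiff scs}. Your proof follows the standard route for this result (contradiction, pointwise bounds on $\scal{y_n}{u}$ via the upper semicontinuity of $\sigma_{A(\cdot)}(u)$, uniform boundedness via Banach--Steinhaus, weak subsequential limit, identification of the limit in $Ax_0$ by the support-function characterization of closed convex sets), which is essentially the argument one finds in the cited reference. Every step is justified in the Hilbert setting: reflexivity gives weak sequential compactness of bounded sets, and weak compactness of $Ax_0$ ensures both finiteness of $\sigma_{Ax_0}$ and closedness of $Ax_0$.
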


\begin{theorem}
\dueto{4.3.Theorem 17}{AubEk84}
\label{sousdiff hcs}
Soit $f\in\Gamma_0(\HH)$. $\partial f$ est h\'emi-continu 
sup\'erieurement, \`a valeurs non vides convexes faiblement 
compactes, sur $\inte\dom f$.
\end{theorem}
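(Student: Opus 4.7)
Mon approche se d\'ecompose en deux volets : les propri\'et\'es des valeurs, puis l'h\'emi-continuit\'e sup\'erieure.

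Pour les valeurs. Soit $x_0\in\inte\dom f$. La non-vacuit\'e de $\partial f(x_0)$ est classique et r\'esulte du th\'eor\`eme de Hahn-Banach appliqu\'e \`a $\epi f$ en $(x_0,f(x_0))$ : puisque $x_0\in\inte\dom f$, $f$ est continue en $x_0$, donc l'int\'erieur de l'\'epigraphe est non vide et un hyperplan d'appui fournit un sous-gradient. La convexit\'e r\'esulte imm\'ediatement de ce que $\partial f(x_0)=\bigcap_{y\in\HH}\menge{u\in\HH}{\scal{y-x_0}{u}\leq f(y)-f(x_0)}$ est une intersection de demi-espaces ferm\'es (ce qui donne aussi la fermeture forte et faible). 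Pour la compacit\'e faible, on utilise que $f$ est localement lipschitzienne sur $\inte\dom f$, donc $\partial f$ est localement born\'e en $x_0$ ; comme un born\'e ferm\'e (en particulier faiblement ferm\'e car convexe ferm\'e) d'un Hilbert est faiblement compact par Banach-Alaoglu, on conclut.

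Pour l'h\'emi-continuit\'e sup\'erieure. L'outil cl\'e est l'identit\'e
\begin{equation}
\sigma_{\partial f(x)}(u)=f'(x;u)=\lim_{t\downarrow0}\Frac{f(x+tu)-f(x)}{t}=\inf_{t>0}\Frac{f(x+tu)-f(x)}{t},
\end{equation}
valable pour tout $x\in\inte\dom f$ et tout $u\in\HH$ (th\'eor\`eme de Moreau-Rockafellar ; la monotonie du quotient en $t$ d\'ecoule de la convexit\'e). Il s'agit alors de montrer que $x\mapsto f'(x;u)$ est semi-continue sup\'erieurement en $x_0$. Soit $\varepsilon>0$ ; on choisit $t_0>0$ assez petit pour que $x_0+t_0u\in\inte\dom f$ et $\Frac{f(x_0+t_0u)-f(x_0)}{t_0}\leq f'(x_0;u)+\varepsilon/2$. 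Par continuit\'e de $f$ au voisinage de $x_0$ et de $x_0+t_0u$, l'application $x\mapsto\Frac{f(x+t_0u)-f(x)}{t_0}$ est continue en $x_0$. On a donc, pour $x$ suffisamment proche de $x_0$,
\begin{equation}
f'(x;u)\leq\Frac{f(x+t_0u)-f(x)}{t_0}\leq\Frac{f(x_0+t_0u)-f(x_0)}{t_0}+\varepsilon/2\leq f'(x_0;u)+\varepsilon,
\end{equation}
ce qui \'etablit la semi-continuit\'e sup\'erieure voulue et donc l'h\'emi-continuit\'e sup\'erieure de $\partial f$ en $x_0$.

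L'obstacle principal sera de disposer proprement de l'identit\'e $\sigma_{\partial f(x)}(\cdot)=f'(x;\cdot)$ sur $\inte\dom f$ : c'est un r\'esultat non trivial de l'analyse convexe, mais il est suffisamment standard (et cit\'e dans \cite{Livre1}) pour qu'on puisse l'invoquer sans d\'emonstration. Tout le reste n'est qu'application routini\`ere de la continuit\'e locale de $f$ sur l'int\'erieur de son domaine, du caract\`ere localement born\'e du sous-diff\'erentiel et de Banach-Alaoglu.
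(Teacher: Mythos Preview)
Le m\'emoire ne d\'emontre pas ce th\'eor\`eme : il est simplement cit\'e d'Aubin--Ekeland \cite{AubEk84} et utilis\'e comme r\'esultat admis. Votre d\'emonstration est correcte et suit exactement la voie classique (celle d'Aubin--Ekeland ou de \cite{Livre1}) : valeurs non vides convexes faiblement compactes via la continuit\'e locale de $f$ et Banach--Alaoglu, puis h\'emi-continuit\'e sup\'erieure via l'identit\'e $\sigma_{\partial f(x)}=f'(x;\cdot)$ et la repr\'esentation de la d\'eriv\'ee directionnelle comme infimum d\'ecroissant de quotients continus.
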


\begin{corollary}
\label{sousdiff scs}
Soit $f\in\Gamma_0(\HH)$. $\partial f:\Hfo\rightarrow
2^{\Hfa}$ est semi-continu sup\'erieurement sur $\inte\dom f$.
\end{corollary}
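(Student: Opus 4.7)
L'\'enonc\'e se pr\'esente comme un corollaire imm\'ediat des deux th\'eor\`emes qui le pr\'ec\`edent. Mon plan est donc de les combiner en un point arbitraire de $\inte\dom f$.

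Je fixerai $x_0\in\inte\dom f$. D'apr\`es le Th\'eor\`eme \ref{sousdiff hcs}, $\partial f(x_0)$ est non vide, convexe et faiblement compact, et $\partial f$ est h\'emi-continu sup\'erieurement en $x_0$. Ces propri\'et\'es constituent exactement les hypoth\`eses du th\'eor\`eme imm\'ediatement ant\'erieur (\dueto{3.3.Theorem 10}{AubEk84}), appliqu\'e \`a l'op\'erateur $A=\partial f:\Hfo\rightarrow 2^{\Hfa}$. On en d\'eduit que $\partial f$, vu de la topologie forte vers la topologie faible, est semi-continu sup\'erieurement en $x_0$.

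Comme $x_0\in\inte\dom f$ est arbitraire, on conclut que $\partial f:\Hfo\rightarrow 2^{\Hfa}$ est semi-continu sup\'erieurement sur $\inte\dom f$.

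Il n'y a pas de v\'eritable obstacle technique ici: la difficult\'e est enti\`erement concentr\'ee dans les deux \'enonc\'es invoqu\'es, en particulier dans la compacit\'e faible des valeurs de $\partial f$ sur $\inte\dom f$, qui provient de la continuit\'e locale de $f$ et donc de la bornitude locale de $\partial f$. Le passage h\'emi-continu sup\'erieur $\Rightarrow$ semi-continu sup\'erieur (topologie forte vers topologie faible) repose quant \`a lui sur une caract\'erisation d'appartenance \`a un convexe ferm\'e faiblement par les fonctions d'appui, combin\'ee \`a la compacit\'e faible de la valeur.
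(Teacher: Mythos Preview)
Your proposal is correct and is exactly the argument the paper intends: the corollary is obtained by applying the preceding theorem \dueto{3.3.Theorem 10}{AubEk84} at each $x_0\in\inte\dom f$, the hypotheses being supplied by Th\'eor\`eme~\ref{sousdiff hcs}. There is nothing to add.
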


\subsubsection{Semi-continuit\'e du projecteur 
sous-diff\'erentiel}

\begin{lemma}
\label{proj sci sur lev}
Soit $f\in\Gamma_0(\HH)$ telle que $\lev{0}f\neq\emp$. $G_f$ est 
continu sur $\inte\lev{0}f$ et semi-continu inf\'erieurement sur 
$\fr\lev{0}f$.
\end{lemma}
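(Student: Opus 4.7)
The plan is to treat the two regions separately, exploiting the fact that on $\lev{0}f$ the operator $G_f$ is just the identity (as a singleton-valued map), together with Lemma \ref{inegalite} to control the selection on the complement near the boundary.

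First I would handle continuity on $\inte\lev{0}f$. If $x\in\inte\lev{0}f$, there is an open ball $B$ with $x\in B\subset\lev{0}f$, hence $(\forall y\in B)\;f(y)\leq0$ and $G_f y=\{y\}$. Thus $G_f|_B$ is the identity with singleton values. Continuity (both upper and lower in the sense of the definition) is then immediate: for upper semicontinuity, any neighborhood $V$ of $G_fx=\{x\}$ contains $B\cap V$, and $G_f(B\cap V)=B\cap V\subset V$; for lower semicontinuity, any sequence $x_n\to x$ eventually lies in $B$, so $G_fx_n=\{x_n\}$ and $x_n\to x$ already provides the required approximating sequence.

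Next I would handle lower semicontinuity on $\fr\lev{0}f$. Since $f\in\Gamma_0(\HH)$ is lsc, $\lev{0}f$ is closed, so $\fr\lev{0}f\subset\lev{0}f$. Thus for $x\in\fr\lev{0}f$ we have $f(x)\leq0$ and $G_fx=\{x\}$: the only element of $G_fx$ that must be approximated is $x$ itself. Let $(x_n)_{n\in\NN}$ be a sequence of $\dom G_f$ converging to $x$. For each $n$ I construct $y_n\in G_fx_n$ as follows: if $f(x_n)\leq0$, set $y_n=x_n$; otherwise $x_n\in\dom\partial f$ (because $x_n\in\dom G_f$ with $f(x_n)>0$), so I pick any $u_n\in\partial f(x_n)$ and set
\begin{equation}
y_n=x_n-\Frac{f(x_n)}{\norm{u_n}^2}u_n\in G_fx_n.
\end{equation}
Using the triangle inequality, $\norm{y_n-x}\leq\norm{y_n-x_n}+\norm{x_n-x}$. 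In the first case $\norm{y_n-x_n}=0$; in the second case $\norm{y_n-x_n}=f(x_n)/\norm{u_n}$, and applying Lemma \ref{inegalite} with $y=x\in\lev{0}f$ yields $f(x_n)/\norm{u_n}\leq\norm{x-x_n}$. In both cases $\norm{y_n-x}\leq2\norm{x_n-x}\to0$, which gives the desired approximation and establishes lower semicontinuity at $x$.

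No real obstacle is expected: the only subtle point is making sure that when $f(x_n)>0$ the sequence $x_n$ is automatically in $\dom\partial f$, which follows from $x_n\in\dom G_f=\lev{0}f\cup\dom\partial f$ (Remarque \ref{projections}). The estimate $f(x_n)/\norm{u_n}\leq\norm{x-x_n}$ is exactly the content of Lemme \ref{inegalite}, and it is this estimate that does all the work.
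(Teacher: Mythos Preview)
Your proof is correct and follows the same outline as the paper's: identity on $\lev{0}f$ plus a distance estimate governed by the subgradient inequality. The one difference is the key tool. You invoke Lemme~\ref{inegalite} to bound $\|y_n-x_n\|=f(x_n)/\|u_n\|\leq\|x-x_n\|$ and then use the triangle inequality, obtaining $\|y_n-x\|\leq 2\|x_n-x\|$ for a specifically constructed $y_n$. The paper instead appeals directly to the quasi-contractance of $G_f$ (Proposition~\ref{quasi-contractance}), which yields $\|y_n-x\|\leq\|x_n-x\|$ in one step and for \emph{every} $y_n\in G_fx_n$, so no construction of a particular selection is needed. Both routes work; the paper's is marginally sharper (constant $1$ instead of $2$) and slightly more streamlined, but the underlying idea is the same.
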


\begin{proof}
Soient $x\in\lev{0}f$ et $(x_n)_{n\in\NN}$ une suite de $\dom G_f$. 
Pour toute suite $(y_n)_{n\in\NN}$ de $\HH$ telle que $(\forall 
n\in\NN)~y_n\in G_fx_n$ on a
\begin{equation}
\left(\forall n\in\NN\right)~\norm{y_n-x}\leq\norm{x_n-x}
\rightarrow0
\end{equation}
par quasi-contractance de $G_f$ (voir Proposition 
\ref{quasi-contractance}). Si de plus $x\in\inte\lev{0}f$, 
alors, pour tout voisinage $V$ de $x$, $G_f\left(V\cap\inte\lev{0}f
\right)\subset V$.
\end{proof}

\begin{proposition}
Soit $f\in\Gamma_0(\HH)$ telle que $\lev{0}f\neq\emp$. Si $f$ est 
Fr\'echet-diff\'erentiable sur $\inte\dom f\cap\clev{0}f$ alors 
$G_f$ est continu sur $\inte\lev{0}f\cup(\inte\dom f\cap\clev{0}f)$ 
et semi-continu inf\'erieurement sur $\fr\lev{0}f$.
\end{proposition}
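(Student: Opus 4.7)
Le plan est de d\'ecomposer l'ensemble $\inte\lev{0}f\cup(\inte\dom f\cap\clev{0}f)\cup\fr\lev{0}f$ selon les trois morceaux de l'\'enonc\'e. La semi-continuit\'e inf\'erieure sur $\fr\lev{0}f$ et la continuit\'e sur $\inte\lev{0}f$ sont fournies imm\'ediatement par le Lemme \ref{proj sci sur lev}, sans recourir \`a l'hypoth\`ese de Fr\'echet-diff\'erentiabilit\'e. Il ne reste donc qu'\`a \'etablir la continuit\'e de la multifonction $G_f$ sur $\inte\dom f\cap\clev{0}f$.

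Soit $x\in\inte\dom f\cap\clev{0}f$. Par Fr\'echet-diff\'erentiabilit\'e de $f$ en $x$, on a $\partial f(x)=\{\nabla f(x)\}$, donc $G_fx$ se r\'eduit au singleton $\{G_f^{\nabla f}x\}$ (voir Remarque \ref{projections}). L'ouverture de $\inte\dom f$ et celle de $\clev{0}f$ (cons\'equence de la semi-continuit\'e inf\'erieure de $f$) entra\^inent celle de $\inte\dom f\cap\clev{0}f$. Comme $f$ est diff\'erentiable en tout point de cet ouvert, $U=\nabla f$ y est une s\'election bien d\'efinie de $\partial f$, et $x\in\inte\dom U$. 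La proposition qui caract\'erise la continuit\'e de $G_f^U$ en $x$ par la Fr\'echet-diff\'erentiabilit\'e de $f$ en $x$ (d\'emontr\'ee plus haut) s'applique alors et livre la continuit\'e en $x$ de la s\'election univoque $G_f^{\nabla f}$.

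Le point d\'elicat est le transfert de cette continuit\'e vers la multifonction $G_f$. Comme $G_f$ est univoque sur tout l'ouvert $\inte\dom f\cap\clev{0}f$ et y co\"incide avec $G_f^{\nabla f}$, la remarque qui suit la d\'efinition de la semi-continuit\'e assure que semi-continuit\'e sup\'erieure, semi-continuit\'e inf\'erieure et continuit\'e de $G_f$ en $x$ sont \'equivalentes \`a la continuit\'e classique de $G_f^{\nabla f}$ en $x$. On en d\'eduit la continuit\'e de $G_f$ en tout point de $\inte\dom f\cap\clev{0}f$, ce qui ach\`eve la preuve.
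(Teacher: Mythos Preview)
Ta d\'emonstration est correcte et suit essentiellement la m\^eme d\'emarche que celle du m\'emoire : le Lemme \ref{proj sci sur lev} r\`egle $\inte\lev{0}f$ et $\fr\lev{0}f$, et sur $\inte\dom f\cap\clev{0}f$ on exploite la Fr\'echet-diff\'erentiabilit\'e pour obtenir la continuit\'e de l'unique s\'election $G_f^{\nabla f}$. La seule diff\'erence est de pr\'esentation : le m\'emoire invoque directement la formule $G_f=\Id-f\,\Inv\circ\nabla f$ (Proposition \ref{inv}\ref{expr avec inv}) et la continuit\'e de $f$ et $\nabla f$, tandis que tu passes par la proposition de la Section~3.1 caract\'erisant la continuit\'e de $G_f^U$ --- proposition dont la preuve repose elle-m\^eme sur cette formule. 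Tu es en revanche plus explicite sur le passage de la continuit\'e de la s\'election univoque \`a celle de la multifonction, point que le m\'emoire laisse implicite.
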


\begin{proof}
Par Fr\'echet-diff\'erentiabilit\'e $\partial f=\nabla f$ et $f$ 
sont continus sur $\inte\dom f\cap\clev{0}f$. Par cons\'equent 
$G_f=\Id-f\Inv\circ\nabla f$ est continu sur $\inte\dom f\cap
\clev{0}f$. Le reste d\'ecoule du Lemme \ref{proj sci sur lev}.
\end{proof}

\begin{proposition}
\dueto{Proposition 1.4.14}{AubFr90}
\label{operateur parametre}
Soient $X$, $Y$ et $Z$ des espaces m\'etriques, $A:X\rightarrow2^Z$ 
et $g:\gra A\rightarrow Y$. On d\'efinit l'op\'erateur suivant:
\begin{equation}
B:X\rightarrow2^Y:x\mapsto g\left(x,Ax\right).
\end{equation}
On suppose $g$ continue.
\begin{enumerate}
\item
Si $A$ est semi-continu inf\'erieurement alors $B$ aussi.
\item
Si $A$ est semi-continu sup\'erieurement 
\`a valeurs compactes alors $B$ aussi.
\end{enumerate}
\end{proposition}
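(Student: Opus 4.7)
Mon plan est de traiter s\'epar\'ement les deux affirmations, car elles rel\`event d'arguments topologiques distincts. Dans les deux cas, l'id\'ee est de ramener la propri\'et\'e cherch\'ee sur $B$ \`a la propri\'et\'e analogue de $A$, la continuit\'e de $g$ sur $\gra A$ servant de ``relais'' entre les valeurs de $A$ et celles de $B$.

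Pour l'assertion (i), je proc\'ederais par la caract\'erisation s\'equentielle. \'Etant donn\'es une suite $(x_n)_{n\in\NN}$ de $\dom A$ convergeant vers $x$ et un point $y\in Bx$, j'\'ecrirais $y=g(x,z)$ pour un certain $z\in Ax$. L'hypoth\`ese de semi-continuit\'e inf\'erieure de $A$ en $x$ fournirait alors une suite $(z_n)_{n\in\NN}$ v\'erifiant $z_n\in Ax_n$ et $z_n\rightarrow z$; la suite $((x_n,z_n))_{n\in\NN}$ convergerait vers $(x,z)$ dans $\gra A$, et la continuit\'e de $g$ livrerait $y_n:=g(x_n,z_n)\in Bx_n$ avec $y_n\rightarrow y$. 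Cette partie ne pr\'esente pas de v\'eritable difficult\'e.

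Pour l'assertion (ii), je passerais par la d\'efinition \`a l'aide des voisinages. \'Etant donn\'e un voisinage ouvert $V$ de $Bx=g(\{x\}\times Ax)$, pour chaque $z\in Ax$, la continuit\'e de $g$ en $(x,z)\in\gra A$ fournit des voisinages ouverts $U_z$ de $x$ et $W_z$ de $z$ tels que $g\big((U_z\times W_z)\cap\gra A\big)\subset V$. Le recouvrement $(W_z)_{z\in Ax}$ de $Ax$ admettrait, par compacit\'e, un sous-recouvrement fini $W_{z_1},\ldots,W_{z_k}$. La semi-continuit\'e sup\'erieure de $A$ appliqu\'ee au voisinage $W=\bigcup_{i=1}^kW_{z_i}$ de $Ax$ donnerait un voisinage $U'$ de $x$ avec $A(U')\subset W$; je poserais alors $U=U'\cap\bigcap_{i=1}^kU_{z_i}$ et v\'erifierais directement que pour tout $x'\in U$ et tout $z'\in Ax'$, il existe $i$ tel que $z'\in W_{z_i}$, d'o\`u $(x',z')\in(U_{z_i}\times W_{z_i})\cap\gra A$ et donc $g(x',z')\in V$, ce qui \'etablit $B(U)\subset V$.

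Le principal obstacle se situe dans l'assertion (ii) : il s'agit de transformer la continuit\'e {\em ponctuelle} de $g$ en chaque $(x,z)\in\{x\}\times Ax$ en un contr\^ole {\em uniforme} sur un unique voisinage de $x$. C'est exactement \`a ce moment que la compacit\'e de $Ax$ intervient, via l'extraction d'un sous-recouvrement fini. Sans cette hypoth\`ese, aucun choix fini de $U_z$ ne suffirait pour construire $U$, et l'argument s'effondrerait; c'est pourquoi la compacit\'e des valeurs est indispensable dans (ii), alors qu'elle n'est pas requise dans (i).
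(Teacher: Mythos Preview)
Your argument is correct and follows the standard route. Note, however, that the paper does not actually prove this proposition: it is quoted without proof from Aubin--Frankowska \cite[Proposition~1.4.14]{AubFr90}, so there is no ``paper's own proof'' to compare against. Your treatment of (i) via the sequential characterisation and of (ii) via the open-cover argument exploiting the compactness of $Ax$ is precisely the classical proof one finds in that reference.

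One small point worth tightening: the conclusion in (ii) is that $B$ is upper semicontinuous \emph{and} has compact values. You establish the semicontinuity carefully but do not explicitly verify that each $Bx$ is compact. This is immediate --- $Bx=g(\{x\}\times Ax)$ is the continuous image of the compact set $\{x\}\times Ax\subset\gra A$ --- but it should be stated for completeness.
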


\begin{corollary}
Soit $f\in\Gamma_0(\HH)$ telle que $\lev{0}f\neq\emp$. 
Si $\dim\HH<+\infty$ alors $G_f$ est semi-continu sup\'erieurement 
sur $\inte\dom f$.
\end{corollary}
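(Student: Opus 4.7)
The plan is to realize $G_f$ as the image of a continuous function applied to the graph of $\partial f$, and then invoke Proposition \ref{operateur parametre}(ii). In finite dimension the weak and strong topologies coincide, so Corollary \ref{sousdiff scs} together with Theorem \ref{sousdiff hcs} gives that $\partial f : \HH \to 2^{\HH}$ is upper semi-continuous with non-empty, convex, compact values on $\inte\dom f$. (Recall also that every point of $\inte\dom f$ lies in $\dom\partial f$.)

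Next, I would define an auxiliary map $g:\gra\partial f \to \HH$ by
\begin{equation}
g(x,u) =
\begin{cases}
x - \Frac{f(x)}{\norm{u}^2}\,u & \text{if } f(x)>0,\\
x & \text{if } f(x)\leq 0,
\end{cases}
\end{equation}
and check that $G_f x = g(\{x\}\times\partial f(x))$ for every $x\in\inte\dom f$. This is immediate from the definition of $G_f$ when $f(x)>0$, and in the case $f(x)\leq 0$ it holds because $g(x,u)=x$ irrespective of $u\in\partial f(x)$.

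The main technical step is to verify that $g$ is continuous on $\gra\partial f \cap (\inte\dom f \times \HH)$. Continuity is clear at any $(x,u)$ with $f(x)>0$, since $f$ is continuous on $\inte\dom f$ and $u\neq 0$ by Lemma \ref{sous-gradient non nul} (so $\norm{\cdot}^{-2}(\cdot)$ is continuous near $u$). The delicate case is a point $(x,u)\in\gra\partial f$ with $f(x)\leq 0$: given a sequence $(x_n,u_n)\to(x,u)$ in $\gra\partial f$, I would split into the indices where $f(x_n)\leq 0$ (for which $g(x_n,u_n)=x_n\to x=g(x,u)$) and those where $f(x_n)>0$. For the latter, applying Lemma \ref{inegalite} (or equivalently the quasi-contraction inequality of Proposition \ref{quasi-contractance}) with $y=x\in\lev{0}f$ yields
\begin{equation}
\Frac{f(x_n)}{\norm{u_n}}\leq\norm{x_n-x}\longrightarrow 0,
\end{equation}
so $\bigl\|g(x_n,u_n)-x_n\bigr\| = f(x_n)/\norm{u_n} \to 0$ and therefore $g(x_n,u_n)\to x$. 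This is the step I expect to be the trickiest, as the map $g$ is defined piecewise and the estimate near $\{f=0\}$ requires the quasi-contractive nature of subgradient projections rather than pure algebra.

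Once continuity of $g$ is established, Proposition \ref{operateur parametre}(ii) applies verbatim (with $X=\inte\dom f$, $Y=Z=\HH$, $A=\partial f$, which is upper semi-continuous and compact-valued), yielding that $B(x):=g(\{x\}\times\partial f(x))=G_f x$ is upper semi-continuous on $\inte\dom f$, which is the claim.
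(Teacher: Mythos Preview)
Your proposal is correct and follows essentially the same approach as the paper: you restrict $\partial f$ to $\inte\dom f$, use finite dimensionality to upgrade the weak compactness and weak upper semi-continuity of Theorem~\ref{sousdiff hcs} and Corollary~\ref{sousdiff scs} to strong compactness and strong upper semi-continuity, define the same auxiliary map $g$, verify its continuity on the graph (using the quasi-contraction inequality for the delicate case $f(x)\leq 0$, exactly as the paper does via Proposition~\ref{quasi-contractance}), and conclude with Proposition~\ref{operateur parametre}(ii). The only cosmetic difference is that for the case $f(x)\leq 0$ the paper packages the estimate by introducing an auxiliary selection $U$ with $Ux_n=u_n$ and invoking $\|G_f^U x_n - x\|\leq\|x_n-x\|$, whereas you invoke the equivalent bound $f(x_n)/\|u_n\|\leq\|x_n-x\|$ from Lemma~\ref{inegalite} directly.
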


\begin{proof}
D\'efinissons l'op\'erateur suivant:
\begin{equation}
A:\HH\rightarrow2^\HH:x\mapsto
\begin{cases}
\partial f(x) & \text{si }x\in\inte\dom f\\
\emp & \text{sinon.}
\end{cases}
\end{equation}
D'apr\`es le Th\'eor\`eme \ref{sousdiff hcs} on a $\dom A=
\inte\dom f$. On d\'efinit \'egalement la fonction suivante:
\begin{equation}
g:\gra A\rightarrow\HH:(x,u)\mapsto
\begin{cases}
x & \text{si }f(x)\leq0\\
x-f(x)\Inv u & \text{si }f(x)>0.
\end{cases}
\end{equation}
V\'erifions que $g$ est continue. Soit $(x,u)\in\gra A$ et 
$(x_n,u_n)_{n\in\NN}$ une suite de $\gra A$ convergente vers 
$(x,u)$. Supposons $f(x)\leq0$ et donnons-nous une s\'election $U$ 
de $\partial f$ telle que $Ux=u$ et $(\forall n\in\NN)~Ux_n=u_n$. 
Comme $G_f^U$ est une quasi-contraction (Proposition 
\ref{quasi-contractance}) on obtient:
\begin{equation}
(\forall n\in\NN)~\norm{g(x_n,u_n)-g(x,u)}=\norm{G_f^Ux_n-G_f^Ux}
\leq\norm{x_n-x}\rightarrow0.
\end{equation}
Supposons maintenant que $f(x)>0$. Comme $\clev{0}f$ est ouvert, 
pour $n$ assez grand on a $f(x_n)>0$ et, comme $\Inv$ est continu 
sur $\HH\backslash\{0\}$ et $x\in\inte\dom f=\cont f$ 
(\textit{c.f.} \dueto{Corollary 8.30}{Livre1}),
\begin{equation}
g(x_n,u_n)=x_n-f(x_n)\Inv u_n\rightarrow x-f(x)\Inv u=g(x,u).
\end{equation}
D\'efinissons l'op\'erateur $B$ comme dans la Proposition 
\ref{operateur parametre}:
\begin{equation}
B:\HH\rightarrow2^\HH:x\mapsto
\begin{cases}
G_fx & \text{si }x\in\inte\dom f\\
\emp & \text{sinon.}
\end{cases}
\end{equation}
D'apr\`es le Th\'eor\`eme \ref{sousdiff hcs} et son corollaire 
\ref{sousdiff scs} $A$ est semi-continu sup\'erieurement \`a 
valeurs compactes. La Proposition \ref{operateur parametre} permet 
de conclure que, sur $\inte\dom f=\dom B$, $B=G_f$ est semi-continu 
sup\'erieurement.
\end{proof}

\subsection{Monotonie}

\begin{lemma}
Soit $f\in\Gamma_0(\HH)$ telle que $\lev{0}f\neq\emp$. Les 
assertions suivantes sont \'equivalentes.
\begin{enumerate}
\item $G_f$ est monotone.
\item
\begin{multline*}
\left(\forall (x,y)\in\left(\clev{0}f\right)^2
\right)\left(\forall u\in\partial f(x)\right)\left(\forall v\in
\partial f(y)\right)\\
\scal{x-y}{f(x)\Inv u-f(y)\Inv v}
\leq\norm{x-y}^2
\end{multline*}
\end{enumerate}
\end{lemma}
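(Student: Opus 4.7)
Le plan est de d\'evelopper la d\'efinition de la monotonie de $G_f$ --- l'in\'egalit\'e $\scal{x-y}{p-q}\geq0$ pour tous $(x,p),(y,q)\in\gra G_f$ --- et de la discuter selon les trois configurations possibles pour le couple $(f(x),f(y))$. Si $f(x)\leq0$ et $f(y)\leq0$ on a $p=x$, $q=y$, et donc $\scal{x-y}{p-q}=\norm{x-y}^2\geq0$ automatiquement. Si $f(x)>0$ et $f(y)>0$, alors $p=x-f(x)\Inv u$ et $q=y-f(y)\Inv v$ avec $u\in\partial f(x)$ et $v\in\partial f(y)$; un d\'eveloppement direct donne $\scal{x-y}{p-q}=\norm{x-y}^2-\scal{x-y}{f(x)\Inv u-f(y)\Inv v}$, de sorte que la monotonie dans cette configuration est exactement la condition (ii).

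Le cas d\'elicat --- \`a mon sens le c\oe{}ur de la preuve --- est le cas mixte: disons $f(x)>0$ et $f(y)\leq0$ (l'autre est sym\'etrique), avec $p=x-f(x)\Inv u$ et $q=y$. Il faut v\'erifier que la monotonie est alors \emph{automatique}, faute de quoi l'\'equivalence serait fausse. J'\'ecrirais $\scal{x-y}{p-q}=\norm{x-y}^2-f(x)\scal{x-y}{\Inv u}$, puis majorerais le terme $f(x)\scal{x-y}{\Inv u}$ en combinant Cauchy-Schwarz ($\scal{x-y}{u}\leq\norm{x-y}\norm{u}$, d'o\`u $\scal{x-y}{\Inv u}\leq\norm{x-y}/\norm{u}$) avec le Lemme \ref{inegalite} (qui fournit $f(x)/\norm{u}\leq\norm{y-x}$ puisque $y\in\lev{0}f$). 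Le produit des deux in\'egalit\'es donne $f(x)\scal{x-y}{\Inv u}\leq\norm{x-y}^2$, et donc $\scal{x-y}{p-q}\geq0$.

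Une fois ce cas mixte absorb\'e, il ne reste plus que la configuration ``$f(x)>0$ et $f(y)>0$'' \`a analyser, et l'\'equivalence \'enonc\'ee d\'ecoule imm\'ediatement de la r\'e\'ecriture du premier paragraphe, en faisant varier $u$ et $v$ ind\'ependamment dans $\partial f(x)$ et $\partial f(y)$.
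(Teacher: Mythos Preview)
Your proposal is correct and follows essentially the same approach as the paper: a three-way case split on the signs of $f(x)$ and $f(y)$, with the mixed case handled via the bound $f(x)/\norm{u}\leq\norm{x-y}$ (which is precisely Lemme~\ref{inegalite}) combined with Cauchy--Schwarz, and the case $f(x)>0$, $f(y)>0$ yielding the equivalence by direct expansion. The only cosmetic differences are that the paper re-derives the bound of Lemme~\ref{inegalite} inline and first observes $\scal{u}{x-y}>0$ in the mixed case, whereas you invoke the lemma directly and do not need that sign (your Cauchy--Schwarz bound covers both signs at once).
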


\begin{proof}
Soit $(x,y)\in\left(\dom G_f\right)^2$. Si $f(x)\leq0$ et $f(y)
\leq0$ alors
\begin{equation}
\scal{G_fx-G_fy}{x-y}=\norm{x-y}^2\geq0.
\end{equation}
Supposons 
maintenant que $f(x)>0$ et $f(y)\leq0$, et donnons-nous $u\in
\partial f(x)$. Alors
\begin{equation}
\label{def monotonie}
\scal{G_fx-G_fy}{x-y}=\norm{x-y}^2-\Frac{f(x)}{\norm{u}^2}
\scal{u}{x-y}
\end{equation}
On a
\begin{align}
-\norm{u}\norm{y-x}+f(x)
&\leq\scal{u}{y-x}+f(x)\nonumber\\
&\leq f(y)\nonumber\\
&\leq0.
\end{align}
On en d\'eduit les relations suivantes:
\begin{eqnarray}
\scal{u}{x-y}\geq f(x)-f(y)>0\\
\Frac{f(x)}{\norm{u}}\leq\norm{y-x}.
\end{eqnarray}
Par cons\'equent la relation (\ref{def monotonie}) donne
\begin{align}
\scal{G_fx-G_fy}{x-y}
&\geq\norm{x-y}^2-\Frac{\norm{y-x}}{\norm{u}}
\scal{u}{x-y}\nonumber\\
&=\Frac{\norm{x-y}}{\norm{u}}\left(\norm{x-y}\norm{u}-
\scal{u}{x-y}\right)\nonumber\\
&\geq0.
\end{align}
L'op\'erateur $G_f$ est donc monotone si et seulement si la relation 
d\'efinissant la monotonie est v\'erifi\'ee pour les points de 
$\clev{0}f$.
\end{proof}

\begin{example}
\'Etant donn\'es un convexe ferm\'e non vide $C\subset\HH$ et 
$\alpha\in\left]0,1/2\right]\cup\{1\}$, on a vu au 
Corollaire \ref{proj puissance distance} que $G_{d_C^{1/\alpha}}$ 
\'etait une contraction ferme, donc monotone.
\end{example}

\newpage

\section*{Conclusion}

Dans ce m\'emoire nous avons \'etudi\'e la projection 
sous-diff\'erentielle 
dans le cadre de la th\'eorie des op\'erateurs, 
et non pas seulement comme m\'ethode algorithmique. \`A notre 
connaissance cette d\'emarche est nouvelle. Nous avons obtenu des 
propri\'et\'es alg\'ebriques li\'ees \`a la composition que 
d'aucuns pourraient qualifier d'\'el\'egantes, mais celles 
li\'ees \`a l'addition et \`a l'inf-convolution ne sont pas aussi 
applicables qu'esp\'er\'e. De m\^eme les interactions avec la 
conjugaison de Fenchel et l'enveloppe de Moreau se sont 
r\'ev\'el\'ees faibles. Nous avons \'egalement \'etabli un fort 
lien entre Fr\'echet-diff\'erentiabilit\'e et continuit\'e du 
projecteur sous-diff\'erentiel. Nous nous attendions cependant 
\`a trouver des conditions plus simples sous lesquelles $G_f$ 
soit une contraction, une contraction ferme, ou m\^eme un 
projecteur m\'etrique.


\bibliographystyle{plain}

\end{document}